\def\today{\number\day\space\ifcase\month\or   January\or February\or
   March\or April\or May\or June\or   July\or August\or September\or
   October\or November\or December\fi\   \number\year}
\theoremstyle{definition}
\newtheorem{thm}{Theorem}[section]
\newtheorem{lemma}[thm]{Lemma}
\newtheorem{prop}[thm]{Proposition}
\newtheorem{df}[thm]{Definition}
\newtheorem{cor}[thm]{Corollary}
\newtheorem{rem}[thm]{Remark}
\newtheorem{rems}[thm]{Remarks}
\newtheorem{nota}[thm]{Notation}
\newtheorem{eg}[thm]{Example}
\newtheorem{egs}[thm]{Examples}
\newtheorem{pbm}[thm]{Problem}
\newcommand{\beq}{\begin{equation}}
\newcommand{\eeq}{\end{equation}}
\newcommand{\beqa}{\begin{eqnarray*}}
\newcommand{\eeqa}{\end{eqnarray*}}
\newcommand{\bal}{\begin{align*}}
\newcommand{\eal}{\end{align*}}
\newcommand{\bi}{\begin{itemize}}
\newcommand{\ei}{\end{itemize}}
\newcommand{\be}{\begin{enumerate}}
\newcommand{\ee}{\end{enumerate}}
\newcommand{\dt}{\delta}
\newcommand{\ep}{\varepsilon}
\newcommand{\zt}{\zeta}
\newcommand{\Q}{{\mathbb{Q}}}
\newcommand{\Z}{{\mathbb{Z}}}
\newcommand{\R}{{\mathbb{R}}}
\newcommand{\C}{{\mathbb{C}}}
\newcommand{\N}{{\mathbb{N}}}
\newcommand{\NI}{{\overline{\mathbb{N}}}}
\newcommand{\B}{{\mathcal{B}}}
\newcommand{\T}{{\mathbb{T}}}
\newcommand{\dist}{{\mathrm{dist}}}
\newcommand{\spec}{{\mathrm{sp}}}
\newcommand{\Aut}{{\mathrm{Aut}}}
\newcommand{\Max}{{\mathrm{Max}}}
\newcommand{\I}{\infty}
\title[]{Banach algebras generated by an invertible isometry of an $L^p$-space}
\date{\today}
\author[Eusebio Gardella]{Eusebio Gardella}
\address{Eusebio Gardella
Department of Mathematics, Deady Hall, University of Oregon
Eugene OR, 97403, USA;
and Fields Institute for Research in Mathematical Sciences,
222 College Street, Toronto ON, M5T 3J1, Canada.}
\email{gardella@uoregon.edu}
\urladdr{http://pages.uoregon.edu/gardella/}
\author{Hannes Thiel}
\address{Hannes Thiel
Mathematisches Institut, Fachbereich Mathematik und Informatik der
Universit\"at M\"unster, Einsteinstrasse 62, 48149 M\"unster, Germany;
and Fields Institute for Research in Mathematical Sciences,
222 College Street, Toronto ON, M5T 3J1, Canada.}
\email{hannes.thiel@uni-muenster.de}
\urladdr{www.math.ku.dk/~thiel/}
\thanks{This work was completed while
the authors were attending the Thematic Program on Abstract Harmonic
Analysis, Banach and Operator Algebras at the Fields Institute in
January-June 2014. The hospitality of the Fields Institute is gratefully acknowledged.}
\subjclass[2000]{Primary: 46J40, 46H35. Secondary: 47L10}
\keywords{Banach algebra, $L^p$-space, $p$-pseudofunctions, invertible isometry, spectrum,
measure class preserving transformation.}
\begin{document}

\begin{abstract}
We provide a complete description of those Banach algebras that are generated
by an invertible isometry of an $L^p$-space together with its inverse.
Examples include the algebra $PF_p(\mathbb{Z})$ of
$p$-pseudofunctions on $\mathbb{Z}$, the commutative $C^*$-algebra $C(S^1)$
and all of its quotients, as well as uncountably many `exotic'
Banach algebras.

We associate to each isometry of an $L^p$-space, a spectral invariant called
`spectral configuration', which contains
considerably more information than its spectrum as an operator.
It is shown that the spectral configuration describes the isometric
isomorphism type of the Banach algebra that the isometry generates
together with its inverse.

It follows from our analysis that these algebras are semisimple.
With the exception of $PF_p(\mathbb{Z})$, they are all closed under
continuous functional calculus, and their Gelfand transform is an isomorphism.

As an application of our results, we show that Banach algebras that act on $L^1$-spaces
are not closed under quotients. This answers the case $p=1$ of a question asked
by Le Merdy 20 years ago.
\end{abstract}
\maketitle
\tableofcontents

\section{Introduction}

Associated to any commutative Banach algebra $A$ is its maximal ideal space $\Max(A)$, which is a locally
compact Hausdorff space when endowed with the hull-kernel topology. Gelfand proved in the 1940's that there is
a norm-decreasing homomorphism $\Gamma_A\colon A\to C_0(\Max(A))$, now called the Gelfand transform. This
representation of $A$ as an algebra of functions on a locally compact Hausdorff space is fundamental to any
study of commutative Banach algebras. It is well known that $\Gamma_A$ is injective if and only if $A$ is
semisimple, and that it is an isometric isomorphism if and only if $A$ is a $C^*$-algebra. The reader is referred
to \cite{kaniuth} for a extensive treatment of the theory of commutative Banach algebras.\\
\indent Despite the usefulness of the Gelfand transform, we are still far from understanding the isometric
structure of (unital, semisimple) commutative Banach algebras, since the Gelfand transform is almost never
isometric. Commutative Banach algebras for which their Gelfand transform is isometric are called uniform algebras, an
example of which is the disk algebra. \\
\indent In this paper, we study those Banach algebras that are generated by an invertible isometry of an $L^p$-space
together with its inverse, for $p\in [1,\infty)$. These are basic examples of what Phillips calls $L^p$-operator algebras
in \cite{phillips crossed products}, which are by definition Banach algebras that can be isometrically represented
as operators on some $L^p$-space. $L^p$-operator algebras constitute a large class of Banach algebras,
which contains all not-necessarily selfadjoint operator algebras (and in particular, all $C^*$-algebras), as well
as many other naturally ocurring examples of Banach algebras. A class worth mentioning is that of the (reduced) $L^p$-operator
group algebras, here denoted $F^p_\lambda(G)$, associated to a locally compact group $G$. These are introduced
in Section 8 of \cite{herz:HarmSynth} with the name $p$-pseudofunctions. (We warn the reader that these algebras
are most commonly denoted be $PF_p(G)$, for example in \cite{herz:HarmSynth} and \cite{neufang runde}.)
The notation $F^p_\lambda(G)$ first appeared in \cite{phillips crossed products}, following
conventions used in \cite{DJW}, and was chosen to match the already established notation in $C^*$-algebra theory.)
The Banach algebra $F^p_\lambda(G)$ is the Banach subalgebra of $\B(L^p(G))$ generated by the image of the integrated
form $L^1(G)\to \B(L^p(G))$ of the left regular representation of $G$ on $L^p(G)$. It is commutative if and only if the group
$G$ is commutative, and together with the universal group $L^p$-operator algebra $F^p(G)$, contains a great deal of information about the group.
For example, it is a result in \cite{GarThi_GpsLp} (also proved independently by Phillips), that a locally
compact group $G$ is amenable if and only if the canonical map
$F^p(G)\to F^p_\lambda(G)$ is an isometric isomorphism, and when $G$ is discrete, this is moreover equivalent
to $F^p(G)$ being amenable as a Banach algebra.

Of particular interest in our development are the algebras $F^p(\Z)$ for $p\in [1,\infty)$. When $p=2$,
this is a commutative $C^*$-algebra which is canonically identified with $C(S^1)$ under the Gelfand transform.
However, $F^p(\Z)$ is never a $C^*$-algebra when $p$ is not equal to 2, although it can be identified with a dense
subalgebra of $C(S^1)$. As one may expect, it turns out that the norm on $F^p(\Z)$ is particularly hard to compute.\\
\indent Another important example of a commutative $L^p$-operator algebra is the group $L^p$-operator algebra
$F^p(\Z_n)$ associated with the finite cyclic group of order $n$. For a fixed $n$, this is the subalgebra of the
algebra of $n$ by $n$ matrices with complex entries generated by the cyclic shift of the basis, and hence is
(algebraically) isomorphic to $\C^n$. It should come as no surprise that the norm on $\C^n$ inherited via this
identification is also very difficult to compute (except in the cases $p=2$, where the norm is simply the supremum
norm, and the case $p=1$, since the 1-norm of a matrix is easily calculated). \\
%\indent Is is clear that $F^p(\Z)$ and $F^p(\Z_n)$ are generated by an invertible isometry and its inverse. It
%follows from our results that not only $F^p(\Z)$ determines the norm on $F^p(\Z_n)$, but also the norm on $F^p(\Z)$
%is determined by the norms on all (or just infinitely many) of the $F^p(\Z_n)$. In fact,
%one of the main consequences of our work is that the norms on $F^p(\Z_n)$ determine, in some sense, the isometric
%structure of all $L^p$-operator algebras generated by an invertible isometry together with its inverse.\\
\indent Studying the group of symmetries is critical in the understanding of any given mathematical structure. In the
case of classical Banach spaces, this was started by Banach in his 1932 book. There is now a great deal of literature
concerning isometries of Banach spaces. See \cite{fleming jamison book}, just to mention one example. For $L^p$-spaces,
it was Banach who first described the structure of invertible isometries of $L^p([0,1])$ for $p\neq 2$, although a complete proof was
not available until Lamperti's 1958 paper \cite{lamperti}, where he generalized Banach's Theorem to $L^p(X,\mu)$ for
an arbitrary $\sigma$-finite measure space $(X,\mu)$ and $p\neq 2$. The same proof works for invertible isometries between different
$L^p$-spaces, yielding a structure theorem for isometric isomorphisms between any two of them. Roughly speaking, an isometric
isomorphism from $L^p(X,\mu)$ to $L^p(Y,\nu)$, for $p\neq 2$, is a combination of a multiplication operator by a measurable function
$h\colon Y\to S^1$, together with an invertible measurable transformation $T\colon X\to Y$ which preserves null-sets.
While this is slighly inaccurate for general $\sigma$-finite spaces, it is true under relatively mild assumptions. (In
the general case one has to replace $T\colon X\to Y$ with a Boolean homomorphism between their $\sigma$-algebras, going
in the opposite direction.) In the case $p=2$, the maps described above are also isometries, but they are not the only
ones, and we say little about these.\\
\indent Starting from Lamperti's result, we study the Banach algebra generated by an invertible isometry of an $L^p$-space
together with its inverse. It turns out that the multiplication operator and the measurable transformation of the space
have rather different contributions to the resulting Banach algebra (more precisely, to its norm). While the multiplication
operator gives rise to the supremum norm on its spectrum, the measurable transformation induces a somewhat more exotic
norm, which, interestingly enough, is very closely related to the norms on $F^p(\Z)$ and $F^p(\Z_n)$ for $n$ in $\N$.\\
\ \\
\indent This paper is organized as follows.
In the remainder of this section we introduce the necessary notation and
terminology.
In Section 2, we recall a number of results on group algebras acting on $L^p$-spaces from \cite{GarThi_GpsLp}
and \cite{GarThi_functoriality}, particularly about cyclic groups. We also prove some results that will be
needed in the later sections. \\
\indent In Section 3, we introduce the notion of spectral configuration; see
Definition~\ref{df: spectral conf}.
For $p$ in $[1,\I)$, we associate to each spectral configuration an $L^p$-operator algebra
which is generated by an invertible isometry together with its inverse in Definition
\ref{df: Fpsigma}.
We also show that there is a strong dichotomy with respect to the isomorphism type of
these algebras: they are isomorphic to either $F^p(\Z)$ or to the space of all continuous
functions on its maximal ideal space; see Theorem \ref{thm: Fpsigma}.
We point out that the isomorphism cannot in general be chosen to be isometric in the second case.
The saturation of a spectral configuration (Definition \ref{df:saturated}) is
introduced with the goal of showing that for $p\in [1,\I)\setminus\{2\}$, two
spectral configurations have canonically isometrically isomorphic associated $L^p$-operator algebras if and only if their saturations
are equal; see Corollary \ref{cor: isom classif of Fpsigma}. \\
\indent In Section 4, we discuss the structure of isometric isomorphisms between $L^p$-spaces for $p\neq 2$. In Theorem \ref{thm: Lamperti}
we use the lifting results from \cite{fremlin} to show that, under mild assumptions, every isometric isomorphism between $L^p$-spaces
is a combination of a multiplication operator and a bi-measurable isomorphism between the measure spaces that preserves null-sets.\\
\indent Section 5 contains our main results. Theorem \ref{thm: description} describes the isometric isomorphism type
of the Banach algebra $F^p(v,v^{-1})$ generated by an invertible isometry $v$ of an $L^p$-space together with its inverse, for $p\neq 2$. It turns
out that this description is very closely related to the dynamic properties of the measurable transformation of the space, and we
get very different outcomes depending on whether or not it has arbitrarily long strings (Definition \ref{df: long string}).
A special feature of algebras of the form $F^p(v,v^{-1})$ is that they are always simisimple, and, except in the case
when $F^p(v,v^{-1})\cong F^p(\Z)$, their Gelfand transform is always an isomorphism (although not necessarily isometric);
see Corollary \ref{cor: semisimple, GTonto}.
Additionally, we show that algebras of the form $F^p(v,v^{-1})$ are closed by functional calculus of a
fairly big class of functions, which includes all continuous functions on the spectrum of $v$ except when $F^p(v,v^{-1})\cong F^p(\Z)$,
in which case only bounded variation functional calculus is available.

Finally, in Section 6, we apply our results to answer the case $p=1$ of a question posed by Le Merdy 20 years ago
(Problem~3.8 in~\cite{LeMerdy}). We show that the class of Banach algebras that act on $L^1$-spaces is not closed
under quotients. In \cite{GarThi_QuotLpOpAlgs}, we use the main results of the present work to give a negative
answer to the remaining cases of Le Merdy's question.

Further applications of the results contained in this paper will appear in \cite{GarThi_Irrat}, where we study
Banach algebras generated by two invertible isometries $u$ and $v$ of an $L^p$-space, subject to the relation
$uv=e^{2\pi i \theta}vu$ for some $\theta\in\R\setminus\Q$.\\
\ \\
\indent Before describing the notation and conventions we use, we mention that, unlike in the case of $C^*$-algebras, the Banach
algebra generated by an invertible isometry of an $L^p$-space does not necessarily contain its inverse, even for $p=2$, as the
following example shows. For a function $f$, we denote by $m_f$ the operator of multiplication by $f$.

\begin{eg} Denote by $\mathbb{D}$ the open disk in $\C$, and consider the disk algebra
$$A(\mathbb{D})=\{f\in C(\overline{\mathbb{D}})\colon f|_{\mathbb{D}} \mbox{ is holomorphic}\}.$$
Then $A(\mathbb{D})$ is a Banach algebra when endowed with the supremum norm. Denote by $\mu$ the Lebesgue measure on $S^1$ and
define a homomorphism $\rho\colon A(\mathbb{D})\to \B(L^2(S^1,\mu))$ by $\rho(f)=m_{f|_{S^1}}$ for $f$ in $A(\mathbb{D})$. Then
$\rho$ is isometric by the Maximum Modulus Principle. \\
\indent Denote by $\iota\colon \mathbb{D}\to \C$ the canonical inclusion. Then $\iota$ generates $A(\mathbb{D})$ because every
holomorphic function on $\mathbb{D}$ is the uniform limit of polynomials. Moreover, $\rho(\iota)$ is an invertible isometry of
$L^2(S^1,\mu)$, but $\iota$ is clearly not invertible in $A(\mathbb{D})$. We conclude that $A(\mathbb{D})$ is an $L^2$-operator
algebra generated by an invertible isometry, but it does not contain its inverse.\end{eg}

We take $\N=\{1,2,\ldots\}$ and $\NI=\N\cup\{\I\}$. For $n$ in $\N$, we write $\ell^p_n$ in place of $\ell^p(\{0,\ldots,n-1\})$.
Also, for $n$ in $\N$, we denote $\omega_n=e^{\frac{2\pi i}{n}}$.\\
\indent For Banach spaces $X$ and $Y$, we denote by $\B(X,Y)$ the Banach space of all bounded linear operators $X\to Y$, and write
$\B(X)$ in place of $\B(X,X)$. If $A$ is a unital Banach algebra and $a\in A$, we denote its spectrum in $A$ by $\spec_A(a)$, or
just $\spec(a)$ if no confusion as to where the spectrum is being computed is likely to arise.\\
\indent If $(X,\mathcal{A},\mu)$ is a measure space and $Y$ is a measurable subset of $X$, we write $\mathcal{A}_Y$ for the
restricted $\sigma$-algebra
$$\mathcal{A}_Y=\{E\cap Y\colon E\in \mathcal{A}\},$$
and we write $\mu|_Y$ for the restriction of $\mu$ to $\mathcal{A}_Y$. Note that if $\{X_n\}_{n\in\N}$ is a partition of $X$
consisting of measurable subsets, then there is a canonical isometric isomorphism
$$L^p(X,\mu)\cong \bigoplus\limits_{n\in\N}L^p(X_n,\mu|_{X_n}).$$
(The direct sum on the right-hand side is the $p$-direct sum.)

We will usually not include the $\sigma$-algebras in our notation for measure spaces, except when they are necessary (particularly
in Section 4). The characteristic function of a measurable set $E$ will be denoted $\mathbbm{1}_E$.

For $p\in (1,\I)$, we denote by $p'$ is conjugate (H\"older) exponent, which satisfies $\frac{1}{p}+\frac{1}{p'}=1$.

We recall some standard definitions and facts about measure spaces. A measurable space
is called a \emph{standard Borel space} when
it is endowed with the $\sigma$-algebra of Borel sets with respect to some Polish topology on the space.
If $(Z,\lambda)$ is a measure space for which $L^p(Z,\lambda)$ is separable, then there exists a
complete $\sigma$-finite measure $\mu$ on a standard Borel space $X$ such
that $L^p(Z,\lambda)$ is isometrically isomorphic to $L^p(X,\mu)$.\\
\ \\
\noindent \textbf{Acknowledgements.} The authors would like to thank Chris Phillips and Nico Spronk for helpful
conversations. We also thank Chris Phillips for sharing some of his unpublished work with us, as well as for
a conversation that led to the result in Section~6.

\section{Algebras of $p$-pseudofunctions on groups}

We will work with norm closed subalgebras of algebras of the form $\B(L^p(X,\mu))$ for fixed $p$ in $[1,\I)$,
a class of operator algebras that was formally introduced in \cite{phillips crossed products}. We will recall
in this section the basic definitions and results related to $L^p$-operator algebras, with focus on the
$L^p$-operator algebras associated with locally compact groups; the so-called algebras of $p$-pseudofunctions.

\begin{df}(See Definition 1.1 in \cite{phillips crossed products}).
Let $A$ be a Banach algebra and let $p\in [1,\I)$. We say that $A$ is an \emph{$L^p$-operator algebra} if there
exist a measure space $(X,\mu)$ and an isometric homomorphism $A\to \B(L^p(X,\mu))$.\end{df}

We point out that an $L^2$-operator algebra is an operator algebra in the usual (nonselfadjoint) sense.
\newline

Relevant examples of $L^p$-operator algebras are those associated to a locally compact group, whose
definition we proceed to recall. The algebra $F^p_\lambda(G)$ of $p$-pseudofunctions of a locally
compact group $G$ was introduced by Herz in
\cite{herz:HarmSynth} (we are thankful to Y. Choi and M. Daws for calling our attention to this reference),
where it was denoted by $PF_p(G)$. (This algebra is also called ``reduced group
$L^p$-operator algebra" in \cite{phillips crossed products}.) The full group $L^p$-operator algebra $F^p(G)$
of $G$ was introduced in \cite{phillips crossed products}.

\begin{df} \label{df: LpGpAlgs}
Let $G$ be a locally compact group and let $p\in [1,\I)$. Denote by $\mathrm{Rep}_{\mathcal{L}^p}(G)$ the class
of all contractive representations of $L^1(G)$ on separable $L^p$-spaces. We denote by
$F^p(G)$ the completion of $L^1(G)$ in the norm given by
\[\|f\|_{\mathcal{L}^p}=\sup\left\{\|\pi(f)\|\colon \pi\in \mathrm{Rep}_{\mathcal{L}^p}(G)\right\}\]
for $f\in L^1(G)$. We call $F^p(G)$ the \emph{full group $L^p$-operator algebra} of $G$.

The \emph{algebra of $p$-pseudofunctions} on $G$ (or \emph{reduced group $L^p$-operator algebra} of $G$),
denoted by $F^p_\lambda(G)$, is the completion of $L^1(G)$ in the norm
\[\|f\|_{F^p_\lambda(G)}=\|\lambda_p(f)\|_{\B(L^p(G))}\]
for $f\in L^1(G)$.
\end{df}

Algebras of pseudofunctions on groups were studied by a number of authors since their introduction by Herz.
See, for example, \cite{herz:HarmSynth}, \cite{neufang runde}, \cite{Runde:QSL},
\cite{phillips crossed products}, \cite{GarThi_GpsLp} and \cite{GarThi_functoriality}.

Adopt the notation of Definition \ref{df: LpGpAlgs}.
By universality of $F^p(G)$, there exists a natural contractive homomorphism
$\kappa\colon F^p(G)\to F^p_\lambda(G)$.

The following is part of Theorem~3.7 in \cite{GarThi_GpsLp}. We point out that the same result was
obtained independently by Phillips (\cite{phillips crossed products},\cite{Phi:multiplic}),
using different methods.

\begin{thm}
\label{thm:AmenTFAE}
Let $G$ be a locally compact group, and let $p\in(1,\infty)$.
The following are equivalent:
\begin{enumerate}
\item
The group $G$ is amenable.
\item
The canonical map $\kappa\colon F^p(G)\to F^p_\lambda(G)$ is an isometric isomorphism.
\end{enumerate}
\end{thm}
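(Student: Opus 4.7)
The argument splits into the two implications.

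For (2)$\Rightarrow$(1): The trivial representation $\epsilon\colon L^1(G)\to \C$, $\epsilon(f)=\int_G f\,d\mu_G$, is a contractive representation on the one-dimensional $L^p$-space $\C$, hence by universality extends to a contractive homomorphism $F^p(G)\to\C$. If $\kappa$ is an isometric isomorphism, this yields
\[
\Big|\int_G f\,d\mu_G\Big| \;\le\; \|\lambda_p(f)\|_{\B(L^p(G))} \qquad\text{for every }f\in L^1(G).
\]
Applied to nonnegative $f$, this is Reiter's condition $(P_p)$, a standard characterization of amenability of locally compact groups valid for any $p\in[1,\infty)$.

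For (1)$\Rightarrow$(2): Assume $G$ is amenable and let $\pi\colon L^1(G)\to\B(L^p(X,\mu))$ be a contractive representation; we must show $\|\pi(f)\|\le\|\lambda_p(f)\|$ for every $f\in L^1(G)$. After reducing to the nondegenerate summand, $\pi$ is the integrated form of a strongly continuous representation $s\mapsto\pi(s)$ of $G$ on $L^p(X,\mu)$; each $\pi(s)$ is a bijective contraction with contractive inverse $\pi(s^{-1})$, hence an isometry. I would then implement an $L^p$-version of Fell's absorption principle. Define $\sigma\colon G\to\B(L^p(G\times X))$ by
\[
(\sigma(s)\xi)(t,x)\;=\;\bigl(\pi(s)\,\xi(s^{-1}t,\cdot)\bigr)(x),
\]
and note that the operator $U$ on $L^p(G\times X)$ given by $(U\xi)(s,x)=(\pi(s^{-1})\xi(s,\cdot))(x)$ is an isometry (fiberwise each $\pi(s^{-1})$ is an isometry of $L^p(X,\mu)$) that intertwines $\sigma$ with the amplification $\lambda_p\otimes\id$, the intertwining identity reducing to the multiplicativity $\pi(s^{-1})\pi(r)=\pi(s^{-1}r)$. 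Consequently, the integrated form satisfies $\|\sigma(f)\|=\|\lambda_p(f)\|$ for every $f\in L^1(G)$.

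It then remains to show that $\pi$ is weakly absorbed by $\sigma$. For this, I would apply Reiter's condition $(P_p)$ to obtain a net $(\phi_\alpha)$ of unit vectors in $L^p(G)$ that are almost invariant under $\lambda_p$ uniformly on compact subsets of $G$. For a compactly supported $f\in L^1(G)$ and any $\xi\in L^p(X,\mu)$, a direct Minkowski estimate using the almost invariance gives
\[
\bigl\|\sigma(f)(\phi_\alpha\otimes\xi) \;-\; \phi_\alpha\otimes\pi(f)\xi\bigr\|_p \;\longrightarrow\; 0,
\]
and since $\|\phi_\alpha\otimes\pi(f)\xi\|_p=\|\pi(f)\xi\|_p$, passing to the limit yields $\|\pi(f)\xi\|_p\le\|\lambda_p(f)\|\,\|\xi\|_p$. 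A density argument extends the inequality to arbitrary $f\in L^1(G)$.

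The main technical obstacle is really the nondegeneracy reduction for contractive representations of $L^1(G)$ on $L^p$-spaces, together with ensuring measurability and strong continuity of the absorbing operator $U$ in the full $\sigma$-finite setting; once these are in hand, both the Fell-absorption identity and the Reiter approximation step are essentially formal.
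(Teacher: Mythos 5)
The paper does not prove this theorem; it is quoted verbatim from Theorem~3.7 of \cite{GarThi_GpsLp}, so there is no in-paper proof to compare against. That said, your argument is the standard one and is essentially correct: the Hulanicki--Reiter-type criterion for (2)$\Rightarrow$(1), and an $L^p$-Fell absorption via the fiberwise isometry $U$ together with Reiter's property $(P_p)$ for (1)$\Rightarrow$(2). The intertwining identity, the fact that $\|\lambda_p(f)\otimes\mathrm{id}_{L^p(X)}\|=\|\lambda_p(f)\|$ on $L^p(G\times X)$, and the Minkowski estimate all check out, and the technical points you flag (nondegeneracy reduction, strong continuity of $s\mapsto\pi(s)$, measurability of $U\xi$) are genuine but routine.

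One inaccuracy worth correcting: in the (2)$\Rightarrow$(1) direction you assert that the inequality $\|f\|_1\le\|\lambda_p(f)\|$ for nonnegative $f$ characterizes amenability ``for any $p\in[1,\infty)$.'' This is false at $p=1$: convolution by a nonnegative $f$ on $L^1(G)$ always has norm exactly $\|f\|_1$, for every locally compact group, so the inequality carries no information there (indeed $F^1(G)=F^1_\lambda(G)=L^1(G)$ always, which is exactly why the theorem is stated only for $p\in(1,\infty)$). The passage from the norm identity to almost-invariant vectors uses uniform convexity of $L^p$ and genuinely requires $p>1$. Since the theorem restricts to $p\in(1,\infty)$, this does not damage your proof, but the parenthetical claim should be dropped.
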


In view of Theorem \ref{thm:AmenTFAE}, we will not distinguish between $F^p(G)$ and $F^p_\lambda(G)$
whenever $G$ is an amenable group.
\newline

We now turn to algebras of $p$-pseudofunctions of finite cyclic groups.

\begin{eg} \label{eg: FpZn}
Let $n$ in $\N$ and let $p\in [1,\I)$. Consider the group $L^p$-operator algebra $F^p(\Z_n)$ of $\Z_n$,
which is the Banach subalgebra of $\B(\ell^p_n)$ generated by the cyclic shift $s_n$ of order $n$,
\[s_n= \left( \begin{array}{ccccc}
0 &  &  &  & 1 \\
1 & 0 & &  &  \\
& \ddots & \ddots &  &  \\
&  & \ddots & 0 &  \\
&  &  & 1 & 0 \end{array} \right).\]

It is easy to check that $F^p(\Z_n)$ is isomorphic as a complex algebra to $\C^n$,
but the canonical embedding $\C^n\cong F^p(\Z_n)\hookrightarrow M_n$ is not as diagonal matrices.
The norm on $F^p(\Z_n)$ can be computed as follows. Set
$\omega_n=e^{\frac{2\pi i}{n}}$, and set
\[u_n= \frac{1}{\sqrt{n}} \left( \begin{array}{ccccc}
1 & 1 & 1 & \cdots & 1 \\
1 &\omega_n &\omega_n^2 & \cdots &\omega_n^{n-1} \\
1 &\omega_n^2 &\omega_n^4 & \cdots &\omega_n^{2(n-1)} \\
\vdots & \vdots & \vdots & \ddots & \vdots \\
1 &\omega_n^{n-1} &\omega_n^{2(n-1)} & \cdots &\omega_n^{(n-1)^2} \end{array} \right).\]
If $\xi=(\xi_1,\ldots,\xi_n)\in \C^n$, then its norm as an element in $F^p(\Z_n)$ is
\[ \|\xi\|_{F^p(\Z_n)}=\left\| u_n \left( \begin{array}{ccccc}
\xi_1 &  &  &    \\
 & \xi_2 & &   \\
 &  & \ddots &    \\
&  &  &  \xi_n  \end{array} \right)u_n^{-1}\right\|_p.\]

 he matrix $u_n$ is a unitary (in the sense that its conjugate transpose is its inverse), and hence $\|\xi\|_{F^2(\Z_n)}=\|\xi\|_\infty$. On the other hand, if $1\leq p\leq q\leq 2$, or if
$2\leq q\leq p<\I$,
then $\|\cdot\|_{F^q(\Z_n)}\leq \|\cdot\|_{F^{p}(\Z_n)}$, by Corollary~3.20 in \cite{GarThi_GpsLp}.
In particular, the norm $\|\cdot\|_{F^p(\Z_n)}$ always dominates the norm
$\|\cdot\|_\I$.\end{eg}

We recall a number of facts from \cite{GarThi_GpsLp} and \cite{GarThi_functoriality}, specialized
to the case of finite cyclic groups, which will be needed in the following sections.

The following is a consequence of Proposition~2.3 in \cite{GarThi_functoriality}.

\begin{prop} \label{prop:inclusion subgp}
Let $n$ in $\N$ and let $p$ in $[1,\I)$. If $k$ divides $n$, then the canonical
inclusion $\Z_k\to \Z_n$ induces an isometric, unital homomorphism
\[F^p(\Z_k)\to F^p(\Z_n).\]\end{prop}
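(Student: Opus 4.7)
The plan is to reduce to the reduced algebras via amenability, then to explicitly identify the induced map with a block-diagonal representation whose blocks are all isometrically equivalent to the left regular representation of $\Z_k$.

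Since $\Z_k$ and $\Z_n$ are both amenable, Theorem~\ref{thm:AmenTFAE} lets us identify $F^p(\Z_k)=F^p_\lambda(\Z_k)$ and $F^p(\Z_n)=F^p_\lambda(\Z_n)$ via the canonical maps, so the whole question lives inside $\B(\ell^p_k)$ and $\B(\ell^p_n)$. By the cited Proposition~2.3 of \cite{GarThi_functoriality}, the inclusion $\iota\colon\Z_k\to\Z_n$ (sending $1\mapsto n/k$, so that $\iota(\Z_k)$ is the unique subgroup of $\Z_n$ of order $k$) induces a contractive, unital homomorphism $\iota_*\colon F^p(\Z_k)\to F^p(\Z_n)$. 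It remains to establish the reverse inequality $\|\iota_*(f)\|_{F^p(\Z_n)}\geq\|f\|_{F^p(\Z_k)}$ for $f\in L^1(\Z_k)$.

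For this, the key observation is that left translation by elements of $\iota(\Z_k)$ permutes each coset of $\iota(\Z_k)$ in $\Z_n$. Choose coset representatives $c_0,\ldots,c_{m-1}$ for $\Z_n/\iota(\Z_k)$, where $m=n/k$, and write
\[
\ell^p_n \;\cong\; \bigoplus_{j=0}^{m-1} \ell^p\bigl(c_j+\iota(\Z_k)\bigr),
\]
the direct sum on the right being a $p$-direct sum. Each summand is invariant under $\lambda_{p,n}(\iota_*(f))$ for every $f\in L^1(\Z_k)$, and the identification $\iota(\Z_k)\to c_j+\iota(\Z_k)$, $x\mapsto c_j+x$, gives an isometric isomorphism $\ell^p_k\to\ell^p(c_j+\iota(\Z_k))$ that intertwines $\lambda_{p,k}(f)$ with the restriction of $\lambda_{p,n}(\iota_*(f))$ to this summand. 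Consequently $\lambda_{p,n}(\iota_*(f))$ is unitarily equivalent (in the Banach sense) to the $p$-direct sum of $m$ copies of $\lambda_{p,k}(f)$. Since the operator norm of a $p$-direct sum of identical copies of an operator equals the norm of that operator, we get
\[
\|\iota_*(f)\|_{F^p(\Z_n)} \;=\; \bigl\|\lambda_{p,n}(\iota_*(f))\bigr\| \;=\; \bigl\|\lambda_{p,k}(f)\bigr\| \;=\; \|f\|_{F^p(\Z_k)},
\]
which proves that $\iota_*$ is isometric.

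The main conceptual point, and the only step that requires care, is the coset decomposition and the verification that left translation by elements of $\iota(\Z_k)$ intertwines the regular representations correctly; once this is in place, the equality of norms is automatic from the $p$-direct sum identification. No delicate norm computation on $F^p(\Z_k)$ (such as the formula involving the unitary $u_n$ in Example~\ref{eg: FpZn}) is needed, because the argument is representation-theoretic and uses only the definition of the reduced norm.
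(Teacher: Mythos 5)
Your argument is correct, and it is essentially the standard restriction-to-cosets proof of this fact. The paper itself gives no proof here --- it simply records the statement as a consequence of Proposition~2.3 of \cite{GarThi_functoriality} --- so your write-up supplies the missing content: the decomposition $\ell^p_n\cong\bigoplus_{j=0}^{m-1}\ell^p(c_j+\iota(\Z_k))$ into $\lambda$-invariant coset blocks, the intertwining of each block with $\lambda_{p,k}$, and the fact that the operator norm of a $p$-direct sum of identical blocks equals the norm of one block, together yield the exact equality $\|\lambda_{p,n}(\iota_*(f))\|=\|\lambda_{p,k}(f)\|$. Note that this equality already gives contractivity for free, so your separate appeal to Proposition~2.3 of \cite{GarThi_functoriality} for the inequality $\leq$ is redundant. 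One small point to tidy up: Theorem~\ref{thm:AmenTFAE} is stated only for $p\in(1,\infty)$, so for $p=1$ you cannot invoke it to identify $F^1(\Z_k)$ with $F^1_\lambda(\Z_k)$; however, for $p=1$ one has $F^1(G)=F^1_\lambda(G)=\ell^1(G)$ for any discrete (indeed any locally compact) group by a direct argument with the contractive approximate identity (as the paper itself uses in Section~6), and in any case the paper's working definition of $F^p(\Z_n)$ for finite cyclic groups (Example~\ref{eg: FpZn}) is already the reduced algebra, so your norm computation applies verbatim. With that caveat addressed, the proof is complete.
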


Given $n\in\N$, we denote by $\tau_n\colon\C^n\to\C^n$ the cyclic shift, which is given
by
$$\tau_n(x_0,x_1,\ldots,x_{n-2},x_{n-1})=(x_{n-1},x_0,\ldots,x_{n-3},x_{n-2})$$
for all $(x_0,\ldots,x_{n-1})$ in $\C^n$.

The following is Proposition~3.2 in \cite{GarThi_functoriality}.

\begin{prop} \label{prop: shift invariance norm on FpZn}
Let $n$ in $\N$ and let $p$ in $[1,\I)$. Then $\tau_n\colon F^p(\Z_n)\to F^p(\Z_n)$ is an isometric isomorphism.
\end{prop}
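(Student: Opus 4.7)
The plan is to realize $\tau_n$ as the restriction to $F^p(\Z_n)\subseteq\B(\ell^p_n)$ of an inner automorphism $\Ad(W)=W(\cdot)W^{-1}$ of $\B(\ell^p_n)$, where $W$ is a carefully chosen invertible isometry of $\ell^p_n$. Once this identification is established, the proposition follows immediately, since conjugation by an invertible isometry of a Banach space is always an isometric algebra automorphism of the ambient bounded-operator algebra, and hence its restriction to any invariant subalgebra is an isometric automorphism of that subalgebra.

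The first step is to translate the definition of $\tau_n$ into the operator picture of $F^p(\Z_n)$. Using the identification $\xi\mapsto u_n D_\xi u_n^{-1}$ from Example~\ref{eg: FpZn} together with the fact that $u_n$ is (up to normalization) the Fourier matrix of $\Z_n$, a direct diagonalization shows that $s_n^k$ corresponds to a geometric vector of the form $(\omega_n^{\pm kj})_{j=0}^{n-1}\in\C^n$. Applying the cyclic shift $\tau_n$ to such a vector simply multiplies it by the single scalar $\omega_n^{\mp k}$, so that in $F^p(\Z_n)$ one obtains
\[\tau_n(s_n^k)=\omega_n^{\mp k}s_n^k\]
for every $k$, with a fixed sign convention throughout.

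The next step is to produce $W$ implementing this scalar action. I would take $W$ to be the diagonal operator $\diag(1,\omega_n^{\mp 1},\omega_n^{\mp 2},\ldots,\omega_n^{\mp(n-1)})$ on $\ell^p_n$; since its diagonal entries all have modulus one, $W$ is an invertible isometry of $\ell^p_n$, and hence $\Ad(W)$ is an isometric algebra automorphism of $\B(\ell^p_n)$. A short computation on the standard basis yields $Ws_nW^{-1}=\omega_n^{\mp 1}s_n$, and therefore $\Ad(W)(s_n^k)=\omega_n^{\mp k}s_n^k$ for every $k\in\Z$. Since $\{s_n^k\}_{k=0}^{n-1}$ spans $F^p(\Z_n)$, the automorphism $\Ad(W)$ preserves $F^p(\Z_n)$, and its restriction coincides with $\tau_n$ by the formula from the previous step, completing the argument. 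No serious obstacle is expected; the only bookkeeping requiring some care is matching the sign conventions in the Fourier matrix $u_n$ with the direction of the shift $\tau_n$ in order to choose the correct root of unity on the diagonal of $W$.
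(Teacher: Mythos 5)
Your argument is correct. The paper itself does not prove this proposition (it is imported from Proposition~3.2 of \cite{GarThi_functoriality}), but your route --- using the identification $\xi\mapsto u_n\,\diag(\xi)\,u_n^{-1}$ of Example~\ref{eg: FpZn} to recognize $\tau_n$ as conjugation by the unimodular diagonal matrix $W=\diag\left(1,\omega_n^{\mp1},\ldots,\omega_n^{\mp(n-1)}\right)$, which is an invertible isometry of $\ell^p_n$ for every $p$ --- is exactly the standard dual-action proof, and the computations check out: $Ws_nW^{-1}=\omega_n^{\mp1}s_n$, while $\tau_n$ multiplies the geometric vector corresponding to $s_n^{k}$ by $\omega_n^{\mp k}$, so the two maps agree on the spanning set $\{s_n^k\}_{k=0}^{n-1}$ up to the sign convention you flag.
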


Let $k$ and $n$ be positive integers. For each $r$ in $\{0,\ldots,k-1\}$, we define a restriction map
$$\rho^{(nk\to n)}_r\colon\C^{nk}\to\C^n$$
by sending a $nk$-tuple $\beta$ to the $n$-tuple
\[
\rho^{(nk\to n)}_r(\beta)_q=\beta_{qk+r},\quad q=0,\ldots,n-1.
\]
The following lemma asserts that $\rho^{(nk\to n)}_r$ is contractive when regarded as a map $F^p(\Z_{nk})\to F^p(\Z_n)$.

\begin{lemma}\label{lma: restriction Z_nk}
Let $k,n\in\N$, and let $p\in[1,\infty)$.
For each $r\in\{0,\ldots,k-1\}$, the restriction map $\rho^{(nk\to n)}_r$ is a contractive, unital homomorphism $F^p(\Z_{nk})\to F^p(\Z_n)$.
\end{lemma}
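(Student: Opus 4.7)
The plan is to identify $\rho^{(nk\to n)}_r$ with the map on $F^p$-algebras induced by the quotient group homomorphism $\Z_{nk}\to\Z_n$, and then invoke the universal property of $F^p(\Z_{nk})$. Under the Fourier transform, $F^p(\Z_m)$ becomes $\C^m$ (as a complex algebra) with pointwise multiplication, and $\rho^{(nk\to n)}_r$ is then a coordinate projection, so it is evidently a unital algebra homomorphism. Hence the content of the statement is contractivity in the $F^p$-norms.

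I would first reduce to the case $r=0$. A direct computation gives $\rho^{(nk\to n)}_r=\rho^{(nk\to n)}_0\circ\tau_{nk}^{-r}$, and by Proposition \ref{prop: shift invariance norm on FpZn} the shift $\tau_{nk}$ is an isometric automorphism of $F^p(\Z_{nk})$. Therefore contractivity of $\rho^{(nk\to n)}_0$ implies the general case.

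For $r=0$, I would consider the surjective group homomorphism $\pi\colon\Z_{nk}\to\Z_{nk}/n\Z_{nk}\cong\Z_n$, given by $j\mapsto j\bmod n$. Its $L^1$-pushforward $\pi_*(f)(i)=\sum_{j\in\pi^{-1}(i)}f(j)$ is a contractive unital homomorphism of convolution algebras. Composing with the left regular representation $\lambda_p$ of $L^1(\Z_n)$ on the separable (in fact finite-dimensional) space $L^p(\Z_n)$ yields a contractive representation of $L^1(\Z_{nk})$, which, by the defining universal property of $F^p(\Z_{nk})$, extends to a contractive unital homomorphism $F^p(\Z_{nk})\to F^p(\Z_n)$ (here using amenability of $\Z_n$ and Theorem \ref{thm:AmenTFAE} to identify $F^p(\Z_n)=F^p_\lambda(\Z_n)$). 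A short Fourier calculation,
\[
\widehat{\pi_*(f)}(q)=\sum_{j=0}^{nk-1}f(j)\,\omega_n^{-qj}=\sum_{j=0}^{nk-1}f(j)\,\omega_{nk}^{-qkj}=\hat f(qk),\quad q=0,\ldots,n-1,
\]
confirms that this induced map agrees with $\rho^{(nk\to n)}_0$ under the Fourier isomorphisms $F^p(\Z_{nk})\cong\C^{nk}$ and $F^p(\Z_n)\cong\C^n$.

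The main obstacle is essentially bookkeeping: keeping the Fourier normalizations straight so that the pushforward under the group quotient corresponds precisely to the coordinate-selection map $\rho^{(nk\to n)}_0$ in the statement. Once that identification is made, $F^p$-contractivity is automatic from the universal property of $F^p(\Z_{nk})$, and the general case follows by shift-invariance.
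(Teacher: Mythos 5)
Your proof is correct and follows essentially the same route as the paper: reduce to $r=0$ using the isometric shift $\tau_{nk}$ (your $\tau_{nk}^{-r}$ is the right power for the indexing convention $\rho^{(nk\to n)}_r(\beta)_q=\beta_{qk+r}$), and then handle $r=0$ via functoriality of $F^p(-)$ under the quotient map $\Z_{nk}\to\Z_n$. The only difference is that the paper cites this last step as Proposition~2.4 of \cite{GarThi_functoriality}, whereas you supply the underlying argument (pushforward of $\ell^1$, universal property, Fourier identification), which is a correct proof of exactly that cited input.
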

\begin{proof}
Let $\tau\colon F^p(\Z_{nk})\to F^p(\Z_{nk})$ be the cyclic shift.
Then $\tau$ is an isometric isomorphism by Proposition \ref{prop: shift invariance norm on FpZn}.
Note that $\rho^{(nk\to n)}_r=\rho^{(nk\to n)}_0\circ \tau^r$.
Thus, it is enough to show that $\rho^{(nk\to n)}_0$ is a contractive, unital homomorphism.
This follows immediately from Proposition~2.4 in \cite{GarThi_functoriality},
so the proof is finished.
\end{proof}

It is a well-known fact that for $p\in [1,\I)\setminus\{2\}$, the only $n$ by $n$ matrices
that are isometries when regarded as linear maps $\ell^p_n\to \ell^p_n$, are precisely
the complex permutation matrices. These are the matrices all of whose entries are either
zero or a complex number of modulus one, that have exactly one non-zero entry on each column
and each row.

Using the above mentioned fact, the proof of the following proposition is straightforward,
using the description of the norm on $F^p(\Z_n)$ given in Example \ref{eg: FpZn}.

\begin{prop}\label{prop:IsomFpZn}
Let $n\in\N$, let $p\in [1,\I)\setminus\{2\}$. Set $\omega_n=e^{\frac{2\pi i}{n}}$.
If $x\in F^p(\Z_n)$ is invertible and satisfies $\|x\|=\|x^{-1}\|=1$, then
there exist $\zeta\in S^1$ and $k\in \{0,\ldots,n-1\}$ such that
\[x=\zeta\cdot \left(1,\omega_n^k,\ldots,\omega_n^{(n-1)k}\right).\]
The converse also holds.\end{prop}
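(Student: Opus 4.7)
The plan is to translate the statement, via the formula in Example~\ref{eg: FpZn}, into a question about matrices, and then combine two structural facts: the classification of isometries of $\ell^p_n$ for $p\neq 2$, and the fact that the image of $F^p(\Z_n)$ inside $M_n$ under the identification $\C^n\cong F^p(\Z_n)$ consists of circulant matrices.

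More precisely, write $x\leftrightarrow\xi=(\xi_0,\ldots,\xi_{n-1})\in\C^n$ under $F^p(\Z_n)\cong\C^n$, and set $T=u_n D_\xi u_n^{-1}\in M_n$, where $D_\xi$ is the diagonal matrix with entries $\xi_0,\ldots,\xi_{n-1}$. By the norm formula in Example~\ref{eg: FpZn}, the assumption $\|x\|=\|x^{-1}\|=1$ is equivalent to $T$ being an invertible isometry of $\ell^p_n$ with isometric inverse. Since $p\neq 2$, the fact recalled just before the statement says that $T$ must be a complex permutation matrix, that is, exactly one nonzero entry per row and column, each of modulus $1$.

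The second input is that $T=u_nD_\xi u_n^{-1}$ commutes with $s_n=u_n\diag(1,\omega_n,\ldots,\omega_n^{n-1})u_n^{-1}$, so $T$ is circulant. Writing $T_{ij}=a_{i-j\bmod n}$, the permutation condition forces exactly one of $a_0,\ldots,a_{n-1}$ to be nonzero and of modulus $1$; hence there exist $\zeta\in S^1$ and $k\in\{0,\ldots,n-1\}$ with $T=\zeta\, s_n^k$. Pulling this back to $\C^n$, one computes from the formula $(u_n D_\xi u_n^{-1})_{il}=\tfrac{1}{n}\sum_j \xi_j\omega_n^{j(i-l)}$ of Example~\ref{eg: FpZn} that $s_n^k$ corresponds precisely to a tuple of the form $(1,\omega_n^{k'},\ldots,\omega_n^{(n-1)k'})$ for some $k'$ (the exponent is a sign convention; replacing $k$ by $n-k$ if necessary, we recover the statement).

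For the converse, any tuple $\zeta(1,\omega_n^k,\ldots,\omega_n^{(n-1)k})$ corresponds to $\zeta s_n^{\pm k}$, which is a unimodular scalar times a permutation matrix, hence an invertible isometry of $\ell^p_n$ with isometric inverse; thus $\|x\|=\|x^{-1}\|=1$ by the norm description. The only slightly delicate step is the bookkeeping identifying circulant-plus-permutation matrices with scalar multiples of $s_n^k$ and tracking the sign of the exponent through the Fourier conjugation; everything else is an immediate application of the two quoted facts.
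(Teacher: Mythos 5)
Your proof is correct and follows exactly the route the paper indicates (which it leaves as a sketch): the norm formula of Example~\ref{eg: FpZn} turns the hypothesis $\|x\|=\|x^{-1}\|=1$ into the statement that $u_nD_\xi u_n^{-1}$ is an invertible isometry of $\ell^p_n$, the quoted classification for $p\neq 2$ forces it to be a complex permutation matrix, and the circulant/commutes-with-$s_n$ observation pins it down to $\zeta s_n^k$. The sign-of-exponent issue you flag is harmless since $k$ ranges over all of $\{0,\ldots,n-1\}$, so your write-up supplies precisely the details the paper omits.
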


Proposition \ref{prop:IsomFpZn} admits a significant generalization, as follows.
If $G$ is a locally compact group, $p\in [1,\I)\setminus\{2\}$, and $x$ is an invertible
isometry in the multiplier algebra of $F^p_\lambda(G)$, then there exist $\zeta\in S^1$
and $g\in G$ such that $x=\zeta \delta_{g}$, where $\delta_g$ denotes the point mass
measure concentrated on $\{g\}$. This result will be proved in \cite{GarThi_IsomConvAlgs}, and will be
used tl\o prove that for two locally compact groups $G_1$ and $G_2$, and for any
$p_1,p_2 \in [1,\I)\setminus\{2\}$, then $F^{p_1}_\lambda(G_1)$ is isometrically isomorphic
to $F^{p_2}_\lambda(G_2)$ if and only if $G_1$ is isomorphic to $G_2$, and either
$p_1=p_2$ or $\frac{1}{p_1}+\frac{1}{p_2}=1$.
\newline

We close this section with an easy fact that will be crucial in our proof of
Theorem \ref{thm: SpConfMapsAlgs}.

\begin{prop}\label{prop:RegExp}
Let $n,d\in\N$ with $d|n$ and $d<n$, and let $p\in [1,\I)\setminus\{2\}$. There exists $\alpha\in F^p(\Z_{n})$
such that
\[\|\alpha\|_{F^p(\Z_{n})}> \sup_{b=0,\ldots,\frac{n}{d}-1} \left\|\rho^{(n\to d)}_b(\alpha)\right\|_{F^p(\Z_d)}.\]
\end{prop}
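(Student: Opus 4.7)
The plan is to argue by contradiction. Assume that for every $\alpha\in F^p(\Z_{n})$,
\[
\|\alpha\|_{F^p(\Z_{n})}\leq \sup_{b=0,\ldots,\frac{n}{d}-1}\left\|\rho^{(n\to d)}_b(\alpha)\right\|_{F^p(\Z_d)}.
\]
Combined with the contractivity of each $\rho_b^{(n\to d)}$ provided by Lemma~\ref{lma: restriction Z_nk}, this would force equality for every $\alpha$. Hence the map
\[
\Phi\colon F^p(\Z_{n})\to\bigoplus_{b=0}^{n/d-1}F^p(\Z_d),\qquad \Phi(\alpha)=(\rho_b^{(n\to d)}(\alpha))_b,
\]
with the $\ell^\infty$-direct sum norm on the codomain, would be an isometric unital homomorphism of Banach algebras. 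Since both sides have complex dimension $n$, the map $\Phi$ would in fact be an isometric isomorphism.

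For a unital Banach algebra $A$, write $U(A)=\{u\in A:u\text{ is invertible and }\|u\|=\|u^{-1}\|=1\}$. Viewed as a topological subspace of $A$, this is a topological invariant under isometric unital isomorphisms. By Proposition~\ref{prop:IsomFpZn}, for every $m\in\N$ the space $U(F^p(\Z_m))=\{\zeta s_m^k:\zeta\in S^1,\ k=0,\ldots,m-1\}$ is homeomorphic to a disjoint union of $m$ circles. Moreover, an element of an $\ell^\infty$-direct sum is an invertible isometry if and only if each of its coordinates is, so
\[
U\left(\bigoplus_{b=0}^{n/d-1}F^p(\Z_d)\right)\cong U(F^p(\Z_d))^{n/d}
\]
is a disjoint union of $d^{n/d}$ tori of real dimension $n/d$.

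Since $d\mid n$ and $d<n$, we have $n/d\geq 2$, so every connected component of the right-hand space has topological dimension at least $2$, whereas every connected component of $U(F^p(\Z_{n}))$ is a circle of dimension~$1$. This contradicts the existence of the homeomorphism induced by $\Phi$, and finishes the proof. The main point to verify is that the hypothetical $\Phi$ is indeed an isometric isomorphism (and not merely an embedding), which reduces to the equality of finite dimensions; once that is in place, Proposition~\ref{prop:IsomFpZn} makes the topological comparison essentially immediate.
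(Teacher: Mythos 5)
Your proof is correct, and it takes a genuinely different route from the paper's. The paper argues directly: it writes down an explicit $\beta\in F^p(\Z_n)$ (the tuple obtained by repeating each power of $\omega_n^d$ exactly $\tfrac{n}{d}$ times) whose restrictions $\rho^{(n\to d)}_b(\beta)$ all coincide with the canonical generating invertible isometry of $F^p(\Z_d)$, so that the right-hand supremum equals $1$ for both $\beta$ and $\beta^{-1}$; it then invokes Proposition \ref{prop:IsomFpZn} to see that $\beta$ is not of the form $\zeta\cdot(1,\omega_n^k,\ldots,\omega_n^{(n-1)k})$, hence not both $\|\beta\|$ and $\|\beta^{-1}\|$ can equal $1$, and whichever exceeds $1$ is the desired $\alpha$. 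You instead argue by contradiction, upgrading the negation (via Lemma \ref{lma: restriction Z_nk}) to an isometric unital isomorphism $\Phi$ onto the $\ell^\infty$-sum of $\tfrac{n}{d}$ copies of $F^p(\Z_d)$, and then distinguishing the two algebras by the topology of their sets of invertible isometries: $n$ circles on one side versus $d^{n/d}$ tori of dimension $\tfrac{n}{d}\geq 2$ on the other (the identification of invertible isometries of the sum with tuples of invertible isometries is justified by $\|x_b\|\,\|x_b^{-1}\|\geq\|1\|=1$, and the surjectivity of $\Phi$ by equality of finite dimensions). Both arguments rest on the same key input, Proposition \ref{prop:IsomFpZn}, but use it differently: the paper uses it only to rule out one specific element being an isometry, while you use the full classification to compute a topological invariant. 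Your version is non-constructive but establishes the stronger structural statement that $F^p(\Z_n)$ is not isometrically unitally isomorphic to $\bigoplus_{b}F^p(\Z_d)$; note that the number of connected components alone would not suffice (e.g.\ $n=4$, $d=2$ gives $4=2^2$), so your appeal to dimension is genuinely needed. The paper's version has the practical advantage of producing a concrete witness, which is mildly convenient later (in the proof of Theorem \ref{thm: SpConfMapsAlgs} an $\alpha$ as in Proposition \ref{prop:RegExp} is normalized and reused), but pure existence suffices there, so nothing downstream is affected.
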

\begin{proof} Let $\omega=e^{\frac{2\pi i}{n}}$ and set
\[\beta=\left(1,\ldots,1,\omega^d,\ldots,\omega^d,\ldots,\omega^{nd-d},\ldots,\omega^{nd-d}\right),\]
regarded as an element in $F^p(\Z_n)$. (There are $\frac{n}{d}$ repetitions of each power of $\omega$.)
Then $\beta$ is invertible (the inverse being its coordinate-wise complex conjugate).

We claim that
\[\sup_{b=0,\ldots,\frac{n}{d}-1} \left\|\rho^{(n\to d)}_b(\beta)\right\|_{F^p(\Z_d)}=
\sup_{b=0,\ldots,\frac{n}{d}-1} \left\|\rho^{(n\to d)}_b(\beta^{-1})\right\|_{F^p(\Z_d)}=1.\]

First, note that $\rho^{(n\to d)}_b(\beta)=\rho^{(n\to d)}_a(\beta)$ and
$\rho^{(n\to d)}_b(\beta^{-1})=\rho^{(n\to d)}_a(\beta^{-1})$ for all $a,b=0,\ldots,\frac{n}{d}-1$.
Since $\rho^{(n\to d)}_0(\beta)=\left(1,\omega^d,\ldots,\omega^{nd-d}\right)$ is the canonical invertible isometry
generating $F^p(\Z_d)$, and $\rho^{(n\to d)}_0(\beta^{-1})$ is its inverse,  we conclude that
\[\left\|\rho^{(n\to d)}_0(\beta)\right\|_{F^p(\Z_d)}=\left\|\rho^{(n\to d)}_0(\beta)\right\|_{F^p(\Z_d)}=1,\]
and the claim follows.

We claim that either $\|\beta\|_{F^p(\Z_{n})}>1$ or $\|\beta^{-1}\|_{F^p(\Z_{n})}>1$.

Based on the description of the invertible isometries of $F^p(\Z_n)$ given in Proposition \ref{prop:IsomFpZn},
it is clear that $\beta$ is not an invertible isometry, so not both $\beta$ and $\beta^{-1}$ have norm
one. Since $\|\cdot\|_{F^p(\Z_n)}\geq \|\cdot\|_\I$ (see Example \ref{eg: FpZn}) and $\|\beta\|_\I=\|\beta^{-1}\|_\I=1$, the claim follows.

The result now follows by setting $\alpha$ equal to either $\beta$ or $\beta^{-1}$, as appropriate.
\end{proof}

\section{Spectral configurations}

In this section, we study a particular class of commutative Banach algebras. They are naturally associated to
certain sequences of subsets of $S^1$ that we call \emph{spectral configurations}; see Definition \ref{df: spectral conf}.
We show that all such Banach algebras are generated by an invertible isometry of an $L^p$-space together with its inverse.
(In fact, the invertible isometry can be chosen to act on $\ell^p$.)
We also show that there is a strong dichotomy with respect to the
isomorphism type of these algebras: they are isomorphic to either $F^p(\Z)$, or to the space of all continuous
functions on its maximal ideal space; see Theorem \ref{thm: Fpsigma}.
In the last part of the section, we study when two spectral configurations give rise to isometrically isomorphic Banach algebras.

We mention here that one of the main results in Section 5, Theorem \ref{thm: description}, states that the Banach algebra generated
by an invertible isometry of an $L^p$-space together with its inverse, is isometrically isomorphic to the $L^p$-operator algebra
associated to a spectral configuration which is naturally associated to the isometry.
\\

We begin by defining a family of norms on algebras of the form $C(\sigma)$, where $\sigma$ is a certain closed subset of $S^1$.
We will later see that these norms are exactly those that arise from spectral configurations consisting of exactly one nonempty set.
Recall that if $n$ is a positive integer, we denote $\omega_n=e^{\frac{2\pi i}{n}}\in S^1$.

\begin{df} \label{df: norm sigma n}
Let $p\in [1,\I)$ and let $n$ in $\N$. Let $\sigma$ be a nonempty closed subset of
$S^1$ which is invariant under rotation by $\omega_n$. For $f$ in $C(\sigma)$, we define
\[\|f\|_{\sigma,n,p}=\sup_{t\in \sigma}\left\|\left(f(t),f(\omega_n t),\ldots,f(\omega_n^{n-1}t)\right)\right\|_{F^p(\Z_n)}.\]
\end{df}

We will see in Proposition \ref{prop: norm sigma n} that, as its notation suggests, the function $\|\cdot\|_{\sigma,n,p}$ is
indeed a norm on $C(\sigma)$.\\
\indent When $p=2$, we have $\|\cdot\|_{F^p(\Z_n)}= \|\cdot\|_\infty$ and hence the norm $\|\cdot\|_{\sigma,n,2}$ is the
supremum norm $\|\cdot\|_\infty$ for all $n$ in $\N$ and every closed subset $\sigma\subseteq S^1$. On the other hand,
if $n=1$ then $F^p(\Z_1)\cong \C$ with the usual norm, so $\|\cdot\|_{\sigma,1,p}$ is the supremum norm for every $p$ in
$[1,\I)$ and every closed subset $\sigma\subseteq S^1$.
However, the algebra $(C(\sigma),\|\cdot\|_{\sigma,n,p})$ is never isometrically isomorphic to $(C(\sigma),\|\cdot\|_\I)$
when $p\neq 2$ and $n>1$; see part (5) of Theorem \ref{thm: Fpsigma}.

\begin{prop}\label{prop: norm sigma n}
Let $p\in [1,\I)$, let $n$ in $\N$, and let $\sigma$ be a nonempty closed subset of $S^1$ which is invariant under
rotation by $\omega_n$.
\be
\item
The function $\|\cdot\|_{\sigma,n,p}$ is a norm on $C(\sigma)$.
\item
The norm $\|\cdot\|_{\sigma,n,p}$ is equivalent to $\|\cdot\|_\infty$.
\item
The Banach algebra $\left(C(\sigma),\|\cdot\|_{\sigma,n,p}\right)$ is isometrically representable on $\ell^p$, and
hence it is an $L^p$-operator algebra.
\ee
\end{prop}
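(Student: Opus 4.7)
The plan is to transport all three properties from the finite-dimensional Banach algebra $F^p(\Z_n)$ via point-evaluation maps. For each $t\in\sigma$, the $\omega_n$-invariance of $\sigma$ ensures that $\omega_n^k t\in\sigma$ for all $k$, so one may define
$$\pi_t\colon C(\sigma)\to F^p(\Z_n),\qquad \pi_t(f)=\bigl(f(t),f(\omega_n t),\ldots,f(\omega_n^{n-1}t)\bigr).$$
Since $F^p(\Z_n)$ is isomorphic to $\C^n$ as a complex algebra (Example~\ref{eg: FpZn}), each $\pi_t$ is a unital algebra homomorphism, and by construction
$$\|f\|_{\sigma,n,p}=\sup_{t\in\sigma}\|\pi_t(f)\|_{F^p(\Z_n)}.$$

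For (1), the triangle inequality and absolute homogeneity of $\|\cdot\|_{\sigma,n,p}$ are inherited from $\|\cdot\|_{F^p(\Z_n)}$ by passing to the supremum, and submultiplicativity follows by combining submultiplicativity of $\|\cdot\|_{F^p(\Z_n)}$ with the homomorphism property of each $\pi_t$. For definiteness I would invoke the inequality $\|\cdot\|_{F^p(\Z_n)}\geq \|\cdot\|_\infty$ recorded in Example~\ref{eg: FpZn}: if $\|f\|_{\sigma,n,p}=0$, then $f(\omega_n^k t)=0$ for every $t\in\sigma$ and every $k$, which forces $f\equiv 0$ on $\sigma$.

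For (2), the same inequality immediately gives $\|f\|_{\sigma,n,p}\geq\|f\|_\infty$. For the reverse direction, since $F^p(\Z_n)$ is a finite-dimensional normed space of dimension $n$, there is a constant $C_{n,p}>0$ with $\|\xi\|_{F^p(\Z_n)}\leq C_{n,p}\|\xi\|_\infty$ for every $\xi\in\C^n$, and taking the supremum over $t\in\sigma$ yields $\|f\|_{\sigma,n,p}\leq C_{n,p}\|f\|_\infty$.

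For (3), I would construct an isometric representation on $\ell^p$ as follows. Choose a countable dense subset $\{t_j\}_{j\in\N}\subseteq\sigma$ (available because $\sigma\subseteq S^1$ is a compact metric space). Regarding $F^p(\Z_n)\subseteq\B(\ell^p_n)$ via the defining isometric embedding, define
$$\pi\colon C(\sigma)\to\B\Bigl(\bigoplus_{j\in\N}\ell^p_n\Bigr),\qquad \pi(f)=\bigoplus_{j\in\N}\pi_{t_j}(f).$$
Each summand has norm at most $\|f\|_{\sigma,n,p}$, so $\pi$ is well defined and contractive, and its target is isometrically isomorphic to $\ell^p$. Continuity of each function $t\mapsto f(\omega_n^k t)$, together with continuity of $\|\cdot\|_{F^p(\Z_n)}$ on the finite-dimensional space $\C^n$, shows that $t\mapsto\|\pi_t(f)\|_{F^p(\Z_n)}$ is continuous on $\sigma$, so that $\sup_j \|\pi_{t_j}(f)\|_{F^p(\Z_n)}=\|f\|_{\sigma,n,p}$; hence $\pi$ is isometric. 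The main (mild) subtlety to watch is ensuring one lands on \emph{separable} $\ell^p$ rather than some non-separable $L^p$-space, which is precisely what the passage to a countable dense subset together with the continuity argument achieves.
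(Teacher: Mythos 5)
Your proposal is correct and follows essentially the same route as the paper: deducing (1) and (2) from the norm properties of $\|\cdot\|_{F^p(\Z_n)}$ and the two-sided comparison with $\|\cdot\|_\infty$, and proving (3) by summing the point-evaluation representations over a countable dense subset of $\sigma$ and using continuity of $f$ to upgrade the supremum over the dense set to the supremum over all of $\sigma$. The only cosmetic difference is that the paper writes the evaluation maps concretely as $u_n\,\mathrm{diag}(f(t_k),\ldots,f(\omega_n^{n-1}t_k))\,u_n^{-1}$ acting on $\ell^p_n$, which is exactly your $\pi_{t_k}$ viewed inside $\B(\ell^p_n)$.
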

\begin{proof}
(1).
This follows immediately from the fact that $\|\cdot\|_{F^p(\Z_n)}$ is a norm.

(2).
Since $\|\cdot\|_\infty\leq \|\cdot\|_{F^p(\Z_n)}$, one has $\|\cdot\|_\infty\leq \|\cdot\|_{\sigma,n,p}$.
On the other hand, since $F^p(\Z_n)$ is finite dimensional, there exists a (finite) constant $C=C(n,p)$ such that $\|\cdot\|_{F^p(\Z_n)}\leq
C \|\cdot\|_\infty$.
Thus, for $f\in C(\sigma)$, we have
\begin{align*}
\|f\|_{\sigma,n,p} & =\sup_{t\in \sigma}\left\|\left(f(t),f(\omega_n t),\ldots,f(\omega_n^{n-1}t)\right)\right\|_{F^p(\Z_n)}\\
 &\leq C \sup_{t\in \sigma}\left\|\left(f(t),f(\omega_n t),\ldots,f(\omega_n^{n-1}t)\right)\right\|_{\infty}\\
&= C\|f\|_\infty.
\end{align*}
We conclude that $\|\cdot\|_\infty\leq \|\cdot\|_{\sigma,n,p}\leq C \|\cdot\|_\infty$, as desired.

(3).
Denote by $u_n\in M_n$ the matrix displayed in Example \ref{eg: FpZn}.
Let $(t_k)_{k\in\N}$ be a dense sequence in $\sigma$, and consider the linear map
\[
\rho\colon C(\sigma)\to \B\left(\bigoplus_{k\in \N}\ell^p_{n}\right)
\]
given by
\[
\rho(f)=\bigoplus_{k\in\N} u_n\left( \begin{array}{ccccc}
f(t_k) &  &  &    \\
 & f(\omega_n t_k) & &   \\
 &  & \ddots &    \\
&  &  &  f(\omega_n^{n-1} t_k)  \end{array} \right)u_n^{-1}
\]
for $f\in C(\sigma)$.
It is easy to verify that $\rho$ is a homomorphism.
Moreover, given $f$ in $C(\sigma)$, we use the description of the norm
$\|\cdot\|_{F^p(\Z_n)}$ from Example \ref{eg: FpZn} at the second step, and continuity of $f$ at the last step, to get
\begin{align*}
\|\rho(f)\|&= \sup_{k\in\N} \left\| u_n\mbox{diag}(f(t_k), f(\omega_n t_k),\ldots,f(\omega_n^{n-1} t_k))u_n^{-1}\right\|\\
&= \sup_{k\in\N}\left\|\left(f(t_k),f(\omega_n t_k),\ldots,f(\omega_n^{n-1}t)\right)\right\|_{F^p(\Z_n)}\\
&=\|f\|_{\sigma,n,p}.
\end{align*}
We conclude that $\rho$ is isometric, as desired.
\end{proof}

Now that we have analyzed the basic example of a spectral configuration, we proceed to study the general case.

\begin{df} \label{df: spectral conf}
A \emph{spectral configuration} is a sequence $\sigma=(\sigma_n)_{n\in\NI}$ of closed subsets of $S^1$ such that
\be\item For every $n\in\N$, the set $\sigma_n$ is
invariant under rotation by $\omega_n$;
\item The set $\sigma_\I$ is either empty or all of $S^1$; and
\item We have $\sigma_n\neq \emptyset$ for at least one $n$ in $\NI$.\ee

The \emph{order} of the spectral configuration $\sigma$ is defined as
\[\mbox{ord}(\sigma)=\sup\ \{n\in\NI\colon \sigma_n\neq\emptyset\}.\]\end{df}

Note that a spectral configuration may have infinite order and yet consist of only finitely many nonempty sets.

We adopt the convention that for $p\in [1,\I)$, the function $\|\cdot\|_{\sigma_\I,\I,p}$ is the zero function if
$\sigma_\I=\emptyset$, and the norm of $F^p(\Z)$ otherwise.

\begin{df}\label{df: Fpsigma}
Let $\sigma=(\sigma_n)_{n\in\NI}$ be a spectral configuration and let $p\in [1,\I)$.
Set
\[
\overline{\sigma}=\overline{\bigcup_{n\in\NI}\sigma_n}\subseteq S^1,
\]
and for $f$ in $C(\overline{\sigma})$, define
\[
\|f\|_{\sigma,p}= \sup_{n\in\NI} \|f|_{\sigma_n}\|_{\sigma_n,n,p}.
\]
The normed algebra associated to $\sigma$ and $p$ is
\[
F^p(\sigma)=\left\{f\in C(\overline{\sigma})\colon \|f\|_{\sigma,p}<\I\right\},
\]
endowed with the norm $\|\cdot\|_{\sigma,p}$.
\end{df}

Since the H\"older exponent $p\in [1,\I)$ will be clear from the context (in particular, it is included in the notation for
$F^p(\sigma)$), we will most of the times from it from the notation for the norm $\|\cdot\|_{\sigma,p}$,
and write $\|\cdot\|_\sigma$ instead, except when confusion
is likely to arise.

\begin{thm} \label{thm: Fpsigma}
Let $\sigma=(\sigma_n)_{n\in\NI}$ be a spectral configuration and let $p\in [1,\I)$.
\be
\item
The Banach algebra $F^p(\sigma)$ is an $L^p$-operator algebra that can be represented on $\ell^p$.
\item
The Banach algebra $F^p(\sigma)$ is generated by an invertible isometry together with its inverse.
\item
If $\mbox{ord}(\sigma)=\I$, then there is a canonical isometric isomorphism
$$F^p(\sigma)\cong F^p(\Z).$$
In particular, if $p\neq 2$, then not every continuous function on $\overline{\sigma}$ has finite $\|\cdot\|_{\sigma,p}$-norm.
\item
If $\mbox{ord}(\sigma)=N<\I$, then $\|\cdot\|_\sigma = \max\limits_{n=1,\ldots,N}\|\cdot\|_{\sigma_n,n}$.
Moreover, the identity map on $C(\overline{\sigma})$ is a canonical Banach algebra isomorphism
\[
(F^p(\sigma),\|\cdot\|_\sigma)\cong \left(C(\overline{\sigma}),\|\cdot\|_\infty\right),
\]
and thus every continuous function on $\overline{\sigma}$ has finite $\|\cdot\|_\sigma$-norm.
\item For $p\in [1,\I)\setminus\{2\}$, there is a canonical isometric isomorphism
\[
(F^p(\sigma),\|\cdot\|_\sigma)\cong \left(C(\overline{\sigma}),\|\cdot\|_\infty\right),
\]
if and only if $\mbox{ord}(\sigma)=1$. When $p=2$, such an isometric isomorphism always exists.
\ee
\end{thm}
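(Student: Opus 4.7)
For parts (1) and (2), I would build an isometric representation of $F^p(\sigma)$ on $\ell^p$ as the $p$-direct sum of the representations of $(C(\sigma_n), \|\cdot\|_{\sigma_n,n,p})$ from Proposition \ref{prop: norm sigma n}(3) (for each finite $n$ with $\sigma_n \neq \E$, precomposed with the restriction $f \mapsto f|_{\sigma_n}$), together with the regular representation of $F^p(\Z)$ on $\ell^p(\Z)$ in case $\sigma_\I = S^1$. By construction, the operator norm of this direct sum on $f$ equals $\sup_n \|f|_{\sigma_n}\|_{\sigma_n,n,p} = \|f\|_\sigma$, giving (1). Under this representation, the coordinate function $z \in C(\overline\sigma)$ acts on each finite-$n$ summand (at the point $t \in \sigma_n$) as a unimodular scalar multiple of the image of the cyclic shift $s_n$, hence as an invertible isometry of $\ell^p_n$; and on the $\sigma_\I$-summand it acts as the standard shift of $\ell^p(\Z)$. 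Thus $z$ and $z^{-1} = \overline z$ are invertible isometries, giving half of (2); density of the Laurent polynomial subalgebra $\C[z, z^{-1}]$ in $F^p(\sigma)$, the other half, will be deferred and inherited from parts (3) and (4).

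For (4), when $\mbox{ord}(\sigma) = N < \I$ the supremum in $\|\cdot\|_\sigma$ is a finite maximum over $n \leq N$. By Proposition \ref{prop: norm sigma n}(2), each $\|\cdot\|_{\sigma_n,n,p}$ is equivalent to $\|\cdot\|_\I$ on $C(\sigma_n)$, with a uniform constant. Since $\overline\sigma = \bigcup_{n \leq N} \sigma_n$ is a finite union of closed sets, $\|f\|_\I = \max_{n \leq N} \|f|_{\sigma_n}\|_\I$ for every $f \in C(\overline\sigma)$. Combining these facts, $\|\cdot\|_\sigma$ is equivalent to $\|\cdot\|_\I$ on $C(\overline\sigma)$, and the identity is the claimed Banach algebra isomorphism.

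For (3), when $\mbox{ord}(\sigma) = \I$, first observe that $\overline\sigma = S^1$: each nonempty $\sigma_n$ is invariant under rotation by $\omega_n$ and hence contains a rotation of the $n$-th roots of unity, and as $n$ ranges over an unbounded set these become dense in $S^1$. Viewing $F^p(\Z)$ as a subalgebra of $C(S^1)$ via the Gelfand transform, the plan is to show $\|\cdot\|_\sigma = \|\cdot\|_{F^p(\Z)}$ on $F^p(\Z)$. The easy direction $\|\cdot\|_\sigma \leq \|\cdot\|_{F^p(\Z)}$ goes as follows: for any $n \in \N$ and $t \in S^1$, the assignment $1 \mapsto t \cdot s_n$ is an isometric representation of $\Z$ on $\ell^p_n$, so it extends to a contractive homomorphism $F^p(\Z) \to F^p(\Z_n)$ whose image of $f$ has $F^p(\Z_n)$-norm $\|(f(t\omega_n^j))_{j=0}^{n-1}\|_{F^p(\Z_n)}$, giving $\|f|_{\sigma_n}\|_{\sigma_n,n,p} \leq \|f\|_{F^p(\Z)}$. \emph{The main obstacle} is the reverse inequality. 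When $\sigma_\I = S^1$ it is trivial, being literally a term in the defining supremum. When $\sigma_\I = \E$ (so infinitely many finite $\sigma_n$ are nonempty), one must establish a F\o lner-type identity $\|f\|_{F^p(\Z)} = \sup_{n,t} \|(f(t\omega_n^j))_j\|_{F^p(\Z_n)}$ for Laurent polynomials $f$, exploiting amenability of $\Z$ together with the isometric tower $F^p(\Z_k) \to F^p(\Z_n)$ for $k \mid n$ from Proposition \ref{prop:inclusion subgp}, so as to identify $F^p(\Z)$ with an appropriate limit along the finite cyclic quotients.

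For (5), the direction $\mbox{ord}(\sigma) = 1 \Rightarrow$ isometric identification is immediate since $F^p(\Z_1) \cong \C$ forces $\|\cdot\|_{\sigma_1, 1, p} = \|\cdot\|_\I$. Conversely, suppose $p \neq 2$ and $\mbox{ord}(\sigma) \geq 2$, and fix $n \geq 2$ with $\sigma_n \neq \E$. Because $\|\cdot\|_{F^p(\Z_n)}$ strictly dominates $\|\cdot\|_\I$ on $\C^n$ for $p \neq 2$ and $n \geq 2$ (Example \ref{eg: FpZn}, or Proposition \ref{prop:RegExp} with $d = 1$), pick $\xi \in \C^n$ with $\|\xi\|_{F^p(\Z_n)} > \|\xi\|_\I$, choose $t_0 \in \sigma_n$, and use bump functions around the $n$ distinct points $t_0\omega_n^j$ to produce $f \in C(\overline\sigma)$ with $f(t_0\omega_n^j) = \xi_j$ and $\|f\|_\I$ arbitrarily close to $\|\xi\|_\I$; then $\|f\|_\sigma \geq \|\xi\|_{F^p(\Z_n)} > \|f\|_\I$, ruling out an isometric identification with $(C(\overline\sigma), \|\cdot\|_\I)$. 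The $p = 2$ statement is clear since $\|\cdot\|_{F^2(\Z_n)} = \|\cdot\|_\I$ for every $n$ and $F^2(\Z) \cong (C(S^1), \|\cdot\|_\I)$. Finally, the deferred density in (2) is now easy: in the finite-order case by Stone--Weierstrass in the equivalent sup norm, and in the infinite-order case by definition of $F^p(\Z)$ as the completion of the Laurent polynomials.
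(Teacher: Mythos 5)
Your treatment of parts (1), (2), (4) and (5) follows essentially the same route as the paper: the $p$-direct sum of the representations from Proposition~\ref{prop: norm sigma n}(3) together with the regular representation of $F^p(\Z)$, the finite-maximum/equivalence-of-norms argument for (4), and a function taking prescribed values on an $\omega_n$-orbit (via Proposition~\ref{prop:RegExp} with $d=1$) for (5). Two small remarks there. First, in (5) your choice of ``$n\geq 2$ with $\sigma_n\neq\E$'' tacitly assumes such an $n$ can be taken finite; if $\mbox{ord}(\sigma)=\I$ but $\sigma_n=\E$ for all finite $n\geq 2$, you must instead invoke part (3) and the fact that $F^p(\Z)\subsetneq C(S^1)$ for $p\neq 2$, which is exactly how the paper splits this case off. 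Second, your explicit handling of the density of $\C[\iota,\iota^{-1}]$ is actually more careful than the paper's ``they clearly generate.''

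The genuine gap is in part (3), which is the heart of the theorem. For the inequality $\|f(\iota)\|_\sigma\geq\|\lambda_p(f)\|$ when $\sigma_\I=\E$, you correctly isolate the needed statement --- a F\o lner-type identity $\|\lambda_p(f)\|=\sup_{n,t}\|(f(t\omega_n^j))_j\|_{F^p(\Z_n)}$ over the $n$ with $\sigma_n\neq\E$ --- but you do not prove it, and the mechanism you gesture at (identifying $F^p(\Z)$ as a limit along finite cyclic quotients using the isometric inclusions $F^p(\Z_k)\to F^p(\Z_n)$ for $k\mid n$ from Proposition~\ref{prop:inclusion subgp}) does not work as stated: the set $\{n:\sigma_n\neq\E\}$ need not be directed under divisibility (it could consist of primes), and those inclusions correspond to subgroup embeddings $\Z_k\hookrightarrow\Z_n$, whose direct limit is not $F^p(\Z)$ in any useful sense here. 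The actual argument, which the paper carries out, needs no divisibility at all: given a Laurent polynomial $f$ of degree $\leq M$ and a finitely supported $\xi\in\ell^p$ with $\mathrm{supp}(\xi)\subseteq[-K,K]$ nearly attaining $\|\lambda_p(f)\|$, one picks a single $n>2(K+M)$ with $\sigma_n\neq\E$ and some $t\in\sigma_n$, represents $F^p(\sigma)$ on $\ell^p_n$ by the twisted cyclic shift with spectrum $\{t,\omega_nt,\dots,\omega_n^{n-1}t\}$, and transplants $\xi$ into $\ell^p_n$; because $n$ exceeds the combined supports there is no wrap-around, so the image vector has exactly the norm $\|\lambda_p(f)\xi\|_p$. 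Without this (or an equivalent) computation, the central claim $F^p(\sigma)\cong F^p(\Z)$ in the infinite-order case is not established.
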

\begin{proof}
(1).
For every $n$ in $\N$, use part (3) in Proposition
\ref{prop: norm sigma n} to find an isometric representation $\rho_n\colon C(\sigma_n)\to \B(\ell^p),$ and let $\rho_\I\colon
F^p(\Z)\to \B(\ell^p)$ be the canonical isometric representation. Define
$$\rho\colon F^p(\sigma)\to \B\left(\bigoplus_{n\in\NI}\ell^p\right) \ \ \mbox{ by } \ \
\rho(f)=\bigoplus_{n\in\NI} \rho_n(f|_{\sigma_n})$$
for all $f$ in $F^p(\sigma)$. It is immediate to check that $\rho$ is isometric. Since $\bigoplus\limits_{n\in\NI}\ell^p$ is
isometrically isomorphic to $\ell^p$, the result follows.

(2).
Let $\iota \colon \overline{\sigma}\to \C$ be the inclusion map, and let $\iota^{-1}\colon \overline{\sigma}\to\C$ be
its (pointwise) inverse.
It is clear that $\|\iota\|_\sigma=\|\iota^{-1}\|_\sigma=1$.
Thus, $\iota$ and $\iota^{-1}$ are invertible isometries, and they clearly generate $F^p(\sigma)$.

(3).
It is enough to show that for every $f$ in $\C[\Z]$, one has
$$\|f(\iota)\|_{F^p(\sigma)}=\|\lambda_p(f)\|_{\B(\ell^p)}.$$
Since $\iota$ is an invertible isometry, the universal property of $F^p(\Z)$ implies that $\|\lambda_p(f)\|\geq \|f(\iota)\|$.

If $\sigma_\I\neq\emptyset$, then
\[\|f(\iota)\|_\sigma\geq \|f(\iota)\|_{\sigma_\I,\I}=\|f\|_{F^p(\Z)},\]
and the result follows. We may therefore assume that $\sigma_\I=\emptyset$.\\
\indent In order to show the opposite inequality, let $\ep>0$ and choose an element $\xi=(\xi_k)_{k\in\Z}$ in $\ell^p$ of finite
support with $\|\xi\|_p^p=1$ and such that
$$\|\lambda_p(f)\xi\|_p>\|\lambda_p(f)\|-\ep.$$
Choose $K$ in $\N$ such that $\xi_k=0$ whenever $|k|>K$. Find a positive integer $M$ in $\N$ and complex coefficients $a_m$ with
$-M\leq m\leq M$ such that $f(x,x^{-1})=\sum\limits_{m=-M}^Ma_mx^m$. Since $\mbox{ord}(\sigma)=\I$, there exists $n>2(K+M)$ such that
$\sigma_n\neq\emptyset$. Fix $t$ in $\sigma_n$ and define a representation $\rho\colon F^p(\sigma) \to \B(\ell^p_n)$ by
\[\rho(h)=\left( \begin{array}{ccccc}
0 &  &  &  &  h(\omega_n^{n-1}t) \\
h(t) & 0 & &  &  \\
& \ddots & \ddots &  &  \\
&  & \ddots & 0 &  \\
&  &  & h(\omega_n^{n-2}t) & 0 \end{array} \right)\]
for all $h$ in $F^p(\sigma)$. It is clear that
$$\|\rho(h)\|=\|(h(t),h(\omega_nt),\ldots,h(\omega_n^{n-1}t))\|_{F^p(\Z_n)}\leq \|h|_{\sigma_n}\|_{\sigma_n,n}\leq \|h\|_\sigma,$$
so $\rho$ is contractive. Denote by $e_0$ the basis vector $(1,0,\ldots,0)\in \ell^p_n$. Set $v=\rho(\iota)$, and note that
$v^k(e_0)=e_{n-k}\omega_n^{n-k}t$ for all $k=0,\ldots,n-1$, where indices are taken modulo $n$.\\
\indent Let $\eta\in \ell^p_n$ be given by $\eta=\sum\limits_{k=-K}^K\xi_ke_{n-k}\omega_n^{n-k}t$. Then $\|\eta\|_p=\|\xi\|_p=1$.
Moreover,
\begin{align*}\rho(f(\iota))\eta &=\sum_{m=-M}^M a_mv^m\left(\sum_{k=-K}^K\xi_ke_{n-k}\omega_n^{n-k}\right)\\
 &=\sum_{m=-M}^M\sum_{k=-K}^Ka_m\xi_k e_{n-m-k}\omega_n^{n-m-k}t\\
&=\sum_{j=-M-K}^{N+K}(\lambda_p(f)\xi)_jv^j(e_0)
\end{align*}

The elements $e_{n-m-k}$ for $-M\leq m\leq M$ and $-K\leq k\leq K$ are pairwise distinct, by the choice of $n$. We
use this at the first step to get
\begin{align*} \|\rho(f(\iota))\eta\|^p_p &= \sum_{j=-N-K}^{N+K}\left|\left[\lambda_p(f)\xi\right]_j\right|^p \left\|v^{j}(e_0)\right\|
=\left\|\lambda_p(f)\xi\right\|^p_p.
\end{align*}
We deduce that
\begin{align*} \|f(\iota)\|\geq \|\rho(f(\iota))\|\geq \|\rho(f(\iota))\eta\|_p=\left\|\lambda_p(f)\xi\right\|_p>\|\lambda_p(f)\|-\ep.\end{align*}
Since $\ep>0$ is arbitrary, we conclude that $\|f(\iota)\|_{F^p(\sigma)}\geq \|\lambda_p(f)\|_{F^p(\Z)}$, as desired.

(4).
Let $N=\mbox{ord}(\sigma)$. It is immediate from the definition that $\|\cdot\|_\sigma =
\max\limits_{n=1,\ldots,N}\|\cdot\|_{\sigma_n,n}$.
Moreover, for each $n=1,\ldots,N$, use part (2) in Proposition \ref{prop: norm sigma n} to find a constant $C(n)>0$ satisfying
$$\|\cdot\|_\I\leq \|\cdot\|_{\sigma_n,n}\leq C(n)\|\cdot\|_\I.$$
Set $C=\max\{C(1),\ldots,C(N)\}$. Then $\|\cdot\|_\I\leq \|\cdot\|_{\sigma}\leq C\|\cdot\|_\I,$ and thus
$\|\cdot\|_\sigma$ is equivalent to $\|\cdot\|_\I$, as desired.
%This shows that the identity map
%$$\mathrm{id}\colon (C(\overline{\sigma}),\|\cdot\|_\sigma)\to (C(\overline{\sigma}),\|\cdot\|_\I)$$
%is an isomorphism, as desired.

(5).
It is clear from the comments after Definition \ref{df: norm sigma n} that $\|\cdot\|_\sigma=\|\cdot\|_\I$ if either
$p=2$ or $\mbox{ord}(\sigma)=1$. Conversely, suppose that $p\neq 2$ and that $\mbox{ord}(\sigma)>1$.
Choose $n$ in $\NI$ with $n>1$ such that $\sigma_n\neq\emptyset$.
If $n=\I$, then $F^p(\sigma)$ is isometrically isomorphic to $F^p(\Z)$ by part (3) of this theorem.
The result in this case follows from part (2) of Corollary~3.20 in \cite{GarThi_GpsLp} for $G=\Z$ and $p'=2$.
We may therefore assume that $n<\I$.

Let $t$ in $\sigma_n$ and choose a continuous function
$f$ on $\overline{\sigma}$ with $\|f\|_\I=1$ such that
\[\left\|\left(f(t),f(\omega_nt),\ldots,f(\omega_n^{n-1})\right)\right\|_{F^p(\Z_n)}>1.\]
(See, for example, the proof of Proposition \ref{prop:RegExp}.)
We get
\begin{align*}
\|f\|_\sigma &\geq \|f|_{\sigma_n}\|_{\sigma_n,n}\\
 &\geq \|(f(t),f(\omega_nt),\ldots,f(\omega_n^{n-1}t))\|_{F^p(\Z_n)}\\
&>1=\|f\|_\I.
\end{align*}
It follows that $\|\cdot\|_\sigma$ is not the supremum norm, and the claim is proved.
\end{proof}

We now turn to the question of when two spectral configurations determine the same $L^p$-operator algebra.\\
\indent It turns out that $F^p(\sigma)$ does not in general determine $\sigma$, and thus there are several spectral configurations
whose associated $L^p$-operator algebras are pairwise isometrically isomorphic. For example, let $\sigma$ be the spectral
configuration given by $\sigma_n=S^1$ for all $n$ in $\NI$, and let $\tau$ be given by $\tau_{n}=\Z_{n}$ for $n$ in $\N$ and
$\sigma_\I=\emptyset$. Then $F^p(\sigma)\cong F^p(\tau)\cong F^p(\Z)$ by part (3) of Theorem \ref{thm: Fpsigma}.\\
\indent However, as we will see later, given $F^p(\sigma)$ one can recover what we call the \emph{saturation} of $\sigma$.
See Corollary \ref{cor: isom classif of Fpsigma}.

\begin{df}
\label{df:saturated}
Let $\sigma$ be a spectral configuration.
We define what it means for $\sigma$ to be saturated in two cases, depending on whether its order is infinite or not.
\begin{itemize}
\item
If $\mbox{ord}(\sigma)=\I$, we say that $\sigma$ is \emph{saturated} if $\sigma_n=S^1$ for all $n$ in $\NI$.
\item
If $\mbox{ord}(\sigma)<\I$, we say that $\sigma$ is \emph{saturated} if $\sigma_m\subseteq \sigma_n$ whenever $n$ divides $m$.
\end{itemize}
\end{df}

\begin{rems}
Let $\sigma$ be a saturated spectral configuration.

(1)
The set $\sigma_\infty$ is nonempty (in which case it equals $S^1$) if and only if $\mbox{ord}(\sigma)=\I$. Since
the order of $\sigma$ is determined by the spectral sets $\sigma_n$ for $n$ finite, we conclude that the
spectral set $\sigma_\I$ is redundant.

(2)
For every $n$ in $\overline{\N}$, we have $\sigma_n\subseteq\sigma_1$: for $n<\I$, this is true since 1 divides $n$, and for
$n=\I$ it is true by definition. In particular, since $\sigma_1$ is
closed, we must have $\sigma_1=\overline{\sigma}$.
\end{rems}

Denote by $\Sigma$ the family of all saturated spectral configurations.
We define a partial order on $\Sigma$ by setting $\sigma\leq\tau$ if $\sigma_n\subseteq\tau_n$
for every $n$ in $\NI$.

\begin{lemma}
The partial order defined above turns $\Sigma$ into a complete lattice.
Moreover, $\Sigma$ has a unique maximal element, and its minimal elements are in bijection with $S^1$.
\end{lemma}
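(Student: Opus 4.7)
The plan is to exhibit the top element, produce an explicit family of atoms parametrised by $S^1$, and then construct arbitrary joins and meets pointwise.

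First, let $\sigma^{\top}$ be the configuration with $\sigma^{\top}_n=S^1$ for every $n\in\NI$. This is a saturated spectral configuration of infinite order, and any $\tau\in\Sigma$ is pointwise contained in it, so it is the unique maximum. Next, for each $t\in S^1$ define $\sigma^{(t)}$ by $\sigma^{(t)}_1=\{t\}$ and $\sigma^{(t)}_n=\E$ for $n\geq 2$ (including $n=\I$); this is a saturated configuration of order one. Each $\sigma^{(t)}$ is minimal, because any $\tau\in\Sigma$ with $\tau\leq\sigma^{(t)}$ must have $\tau_n=\E$ for $n\geq 2$ and $\tau_1\S\{t\}$, and the requirement that $\tau$ have some nonempty spectral set forces $\tau=\sigma^{(t)}$.

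To see conversely that every minimal element is some $\sigma^{(t)}$, I would argue by cases on $\mbox{ord}(\sigma)$. If $\mbox{ord}(\sigma)=\I$, saturation forces $\sigma=\sigma^{\top}$, which contradicts minimality via $\sigma^{(1)}<\sigma^{\top}$. If $\mbox{ord}(\sigma)=N$ is finite with $N\geq 2$, pick $t\in\sigma_N$; saturation gives $t\in\sigma_1$, so $\sigma^{(t)}<\sigma$ strictly. Finally, if $\mbox{ord}(\sigma)=1$ but $|\sigma_1|\geq 2$, any $t\in\sigma_1$ yields $\sigma^{(t)}<\sigma$. Hence the only possibility for a minimal $\sigma$ is $\sigma=\sigma^{(t)}$ with $\sigma_1=\{t\}$ a singleton, so $t\mapsto\sigma^{(t)}$ is the desired bijection.

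For the complete lattice structure, I construct joins and meets pointwise, paying attention to the saturation constraint. Given a family $\{\tau^{(i)}\}_{i\in I}\S\Sigma$, if some $\tau^{(i)}_\I=S^1$ or $\sup_i\mbox{ord}(\tau^{(i)})=\I$, the join must be $\sigma^{\top}$, because any upper bound then has infinite order and saturation forces it to equal $\sigma^{\top}$. Otherwise, writing $N=\sup_i\mbox{ord}(\tau^{(i)})<\I$, the candidate join is $\rho_n=\overline{\bigcup_i\tau^{(i)}_n}$ for $n\leq N$ and $\rho_n=\E$ otherwise; closedness, $\omega_n$-invariance, and the saturation inclusion $\rho_m\S\rho_n$ for $n\mid m$ are all inherited directly from the $\tau^{(i)}$ by taking unions and closures. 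For meets, set $\rho_n=\bigcap_i\tau^{(i)}_n$ for every $n\in\NI$; the intersection is closed, rotation-invariant, and inherits saturation, and whenever some $\rho_n$ is nonempty it is the greatest lower bound.

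The main obstacle is the degenerate case for meets: if every $\rho_n$ is empty, the family has no common lower bound in $\Sigma$, so for the poset to be a complete lattice in the standard sense one must adjoin a formal bottom element (equivalently, permit the all-empty configuration to play the role of $\bot$) that absorbs empty intersections. Once this convention is in place, the remaining verifications that the above candidates genuinely realise the join and meet, and that the $\sigma^{(t)}$ exhaust the atoms, are routine consequences of the definitions.
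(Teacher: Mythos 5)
Your proposal follows the same basic strategy as the paper's proof: pointwise suprema (taking closures) and pointwise infima, the top element given by $\sigma^\I_n=S^1$ for all $n\in\NI$, and the minimal elements $\sigma(\zeta)$ with $\sigma(\zeta)_1=\{\zeta\}$. However, your case analysis is genuinely more careful, and it correctly surfaces two points that the paper's one-line ``readily verified'' elides. First, when the members of the family all have finite order but the orders are unbounded, the pointwise formula $(\sup\Omega)_n=\overline{\bigcup_{\sigma\in\Omega}\sigma_n}$ does not land in $\Sigma$: the resulting sequence has infinite order, and a saturated configuration of infinite order must have every entry equal to $S^1$, which the closed unions need not be. (For instance, take $\tau^{(k)}_n$ equal to the group of $3^k$-th roots of unity when $n$ divides $3^k$ and empty otherwise; the closed union of the entries at $n=2$ is empty, while the order of the pointwise supremum is infinite.) Your observation that any upper bound of such a family already has infinite order and hence equals $\sigma^\I$, so that the join is the top element, is the correct repair. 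Second, you are right that the pointwise meet of two distinct minimal elements $\sigma(\zeta)$ and $\sigma(\zeta')$ is the all-empty sequence, which is excluded by condition (3) in the definition of a spectral configuration; as literally defined, $\Sigma$ has no bottom element and two disjoint minimal elements have no common lower bound, so one must either adjoin a formal bottom or admit the empty configuration for the complete-lattice statement to hold verbatim. (In the paper's only application of the infimum, namely defining the saturation of $\sigma$ as the infimum of all saturated configurations containing $\sigma$, the family has nonempty intersection, so the degenerate case never arises.) Your identification of the minimal elements and the remaining pointwise verifications are correct and match the paper.
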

\begin{proof}
Let $\Omega$ be a nonempty subset of $\Sigma$.
For each $n\in\overline{\N}$, set
\[
(\sup\Omega)_n=\overline{\bigcup_{\sigma\in\Omega}\sigma_n} \ \ \mbox{ and } \ \ (\inf\Omega)_n=\bigcap_{\sigma\in\Omega}\sigma_n.
\]
It is readily verified that this defines elements $\sup\Omega$ and $\inf\Omega$ of $\Sigma$ that are the supremum and infimum of $\Omega$, respectively.

The unique maximal element $\sigma^\I$ is given by $\sigma^\I_n=S^1$ for all $n\in\NI$.
For each element $\zt$ in $S^1$, there is a minimal configuration $\sigma(\zt)$ given by $\sigma(\zt)_1=\{\zt\}$ and $\sigma_n(\zt)=\emptyset$ for $n\geq 2$.
Conversely, if $\sigma$ is a saturated configuration, then $\sigma_1$ is not empty, so we may choose $\zt\in \sigma_1$.
Then $\sigma(\zt)$ is a minimal configuration and $\sigma(\zt)\leq \sigma$.
\end{proof}

\begin{df}
Let $\sigma$ be a spectral configuration.
The \emph{saturation} of $\sigma$, denoted by $\widetilde{\sigma}$, is the minimum of all saturated spectral configurations that contain $\sigma$.
\end{df}

Note that any spectral configuration (saturated or not) is contained in the maximal configuration $\sigma^\I$. It follows
that the saturation of a spectral configuration is well defined.

The proof of the following lemma is a routine exercise and is left to the reader.

\begin{lemma}
Let $\sigma$ be a spectral configuration and let $\widetilde\sigma$ be its saturation.
Then:
\begin{enumerate}
\item
We have $\overline{\sigma} =\overline{\widetilde{\sigma}} =\widetilde{\sigma}_1$.
\item
We have $\mbox{ord}(\sigma)=\mbox{ord}(\widetilde{\sigma})$.
\item
If $\mbox{ord}(\sigma)<\I$, then $\widetilde{\sigma}_n =\bigcup\limits_{k=1}^\infty\sigma_{kn}$ for every $n$ in $\N$.
\end{enumerate}
\end{lemma}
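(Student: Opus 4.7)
The plan is to case-split on whether $\mbox{ord}(\sigma)$ is finite or infinite, give an explicit formula for $\widetilde{\sigma}$ in each case, and then read off the three claims. The overarching observation is that when $\mbox{ord}(\sigma)=\I$ the saturated configurations are extremely rigid, while when $\mbox{ord}(\sigma)<\I$ the minimal saturated cover can be constructed by hand.

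\textbf{Infinite-order case.} By Definition~\ref{df:saturated}, the only saturated configuration of infinite order is the maximal element $\sigma^{\I}$, with $\sigma^{\I}_n = S^1$ for every $n \in \NI$. Any saturated configuration containing $\sigma$ is nonempty at each level at which $\sigma$ is, hence has infinite order whenever $\sigma$ does; minimality of $\widetilde{\sigma}$ then forces $\widetilde{\sigma} = \sigma^{\I}$. This gives (2) at once and yields $\widetilde{\sigma}_1 = \overline{\widetilde{\sigma}} = S^1$, so to finish (1) I must show $\overline{\sigma} = S^1$. If $\sigma_{\I} = S^1$ this is trivial; otherwise $\mbox{ord}(\sigma)=\I$ forces $\sigma_n \neq \E$ for arbitrarily large finite $n$, and $\omega_n$-invariance then forces $\sigma_n$ to contain the full orbit $\{\omega_n^j t : 0 \leq j < n\}$ of any chosen $t \in \sigma_n$. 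Since these orbits are $\tfrac{2\pi}{n}$-dense in $S^1$, letting $n \to \I$ yields $\overline{\sigma} = S^1$.

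\textbf{Finite-order case, $N=\mbox{ord}(\sigma)<\I$.} I propose the candidate $\tau_n = \bigcup_{k=1}^{\I}\sigma_{kn}$ for $n \in \N$, together with $\tau_{\I} = \E$. Because $N<\I$, each $\tau_n$ is actually a \emph{finite} union of closed sets, hence closed; and since $\sigma_{kn}$ is invariant under rotation by $\omega_{kn}$ and $\omega_{kn}^{k} = \omega_n$, the union $\tau_n$ is invariant under rotation by $\omega_n$. So $\tau$ is a spectral configuration. It contains $\sigma$ (take $k=1$), and $\mbox{ord}(\tau) = N$ because $\sigma_{kN} = \E$ for $k \geq 2$ while $\sigma_N \neq \E$. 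It is saturated: if $n \mid m$, then every $km$ is a multiple of $n$, so $\sigma_{km} \S \tau_n$, whence $\tau_m \S \tau_n$. For minimality, if $\tau' \supseteq \sigma$ is any saturated configuration, then $\sigma_{kn} \S \tau'_{kn} \S \tau'_n$ (using $n \mid kn$), so $\tau_n \S \tau'_n$; hence $\widetilde{\sigma} = \tau$, which is exactly (3). Now (2) reads $\mbox{ord}(\widetilde{\sigma}) = N$; and for (1), $\widetilde{\sigma}_1 = \bigcup_{k=1}^{N}\sigma_k$ is a finite union of closed sets equal to $\overline{\sigma}$, while saturation applied to $1 \mid n$ gives $\widetilde{\sigma}_n \S \widetilde{\sigma}_1$ for every $n$, so $\overline{\widetilde{\sigma}} = \widetilde{\sigma}_1 = \overline{\sigma}$.

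The only step whose subtlety is worth flagging is the rotation-invariance density argument used in the infinite-order subcase when $\sigma_{\I} = \E$; every remaining verification is a direct, mechanical check against the definition of saturation.
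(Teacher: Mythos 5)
Your proof is correct and complete; the paper itself gives no argument here, stating only that the lemma is ``a routine exercise left to the reader,'' and your two-case construction (the rigidity of infinite-order saturated configurations forcing $\widetilde{\sigma}=\sigma^{\I}$, versus the explicit candidate $\tau_n=\bigcup_{k\geq 1}\sigma_{kn}$ verified to be the minimum saturated cover in the finite-order case) is exactly the intended verification. The one step that genuinely required an idea --- the $\tfrac{2\pi}{n}$-density of $\omega_n$-orbits to get $\overline{\sigma}=S^1$ when $\mathrm{ord}(\sigma)=\I$ but $\sigma_{\I}=\E$ --- you identified and handled correctly.
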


If $\sigma$ is a spectral configuration, we denote by $\iota_\sigma\in F^p(\sigma)\subseteq C(\overline{\sigma})$ the canonical
inclusion of $\overline{\sigma}$ into $\C$.

\begin{prop}
Let $\sigma$ be a spectral configuration and let $\widetilde\sigma$ be its saturation.
Then there is a canonical isometric isomorphism
\[
F^p(\sigma)\cong F^p(\widetilde\sigma),
\]
which sends $\iota_\sigma$ to $\iota_{\widetilde\sigma}$.
\end{prop}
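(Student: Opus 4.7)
My plan is to realize the canonical isomorphism as the identity map on continuous functions, and reduce the claim to proving an equality of norms. By the preceding lemma, $\overline{\sigma}=\overline{\widetilde\sigma}$, so both $F^p(\sigma)$ and $F^p(\widetilde\sigma)$ embed into $C(\overline\sigma)$. The inclusion $\iota_\sigma$ of $\overline\sigma$ into $\C$ coincides (as a function) with $\iota_{\widetilde\sigma}$, so it suffices to show that $\|\cdot\|_{\sigma,p}=\|\cdot\|_{\widetilde\sigma,p}$ on $C(\overline\sigma)$; then the two algebras consist of the same functions and the identity provides the desired canonical isometric isomorphism.

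I would split into the two cases in Definition~\ref{df:saturated}. If $\mathrm{ord}(\sigma)=\infty$, then also $\mathrm{ord}(\widetilde\sigma)=\infty$ by the preceding lemma, so part~(3) of Theorem~\ref{thm: Fpsigma} gives canonical isometric isomorphisms $F^p(\sigma)\cong F^p(\Z)\cong F^p(\widetilde\sigma)$ sending $\iota_\sigma$ and $\iota_{\widetilde\sigma}$ to the canonical generator of $F^p(\Z)$, and the composition is the desired map.

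The main work is the case $\mathrm{ord}(\sigma)=N<\infty$, where the preceding lemma gives $\widetilde{\sigma}_n=\bigcup_{k\ge 1}\sigma_{kn}$ (a finite union, hence closed). Since $\sigma_n\subseteq\widetilde{\sigma}_n$ for every $n\in\NI$, the inequality $\|f\|_{\sigma,p}\le \|f\|_{\widetilde\sigma,p}$ is immediate. For the reverse, fix $n\in\N$ and $t\in\widetilde{\sigma}_n$; choose $k\ge 1$ with $t\in\sigma_{kn}$, and consider the vector
\[
\beta=\bigl(f(t),f(\omega_{kn}t),\ldots,f(\omega_{kn}^{kn-1}t)\bigr)\in\C^{kn}.
\]
Since $\omega_n=\omega_{kn}^k$, the restriction map $\rho^{(kn\to n)}_{0}$ from Lemma~\ref{lma: restriction Z_nk} sends $\beta$ to $\bigl(f(t),f(\omega_n t),\ldots,f(\omega_n^{n-1}t)\bigr)$. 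By that lemma, $\rho^{(kn\to n)}_{0}$ is contractive from $F^p(\Z_{kn})$ to $F^p(\Z_n)$, so
\[
\bigl\|\bigl(f(t),f(\omega_n t),\ldots,f(\omega_n^{n-1}t)\bigr)\bigr\|_{F^p(\Z_n)}\le \|\beta\|_{F^p(\Z_{kn})}\le \|f|_{\sigma_{kn}}\|_{\sigma_{kn},kn,p}\le \|f\|_{\sigma,p}.
\]
Taking the supremum over $t\in\widetilde{\sigma}_n$ and then over $n\le\mathrm{ord}(\widetilde\sigma)=N$ yields $\|f\|_{\widetilde\sigma,p}\le \|f\|_{\sigma,p}$, completing the proof.

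The main obstacle is the finite-order step, and specifically the correct bookkeeping that lets one identify the $n$-subsampled vector of $kn$-th roots of unity with an element in the image of $\rho^{(kn\to n)}_0$; the contractivity statement of Lemma~\ref{lma: restriction Z_nk} is exactly what makes the norm comparison work.
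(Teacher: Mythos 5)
Your proposal is correct and follows essentially the same route as the paper: the infinite-order case is delegated to part~(3) of Theorem~\ref{thm: Fpsigma}, and in the finite-order case the reverse inequality $\|f\|_{\widetilde\sigma}\le\|f\|_\sigma$ is obtained by writing $\widetilde\sigma_n=\bigcup_k\sigma_{kn}$ and recognizing the $n$-tuple $(f(t),f(\omega_nt),\ldots,f(\omega_n^{n-1}t))$ as $\rho^{(kn\to n)}_0$ applied to the corresponding $kn$-tuple, then invoking the contractivity of Lemma~\ref{lma: restriction Z_nk}. Your bookkeeping with $\omega_n=\omega_{kn}^k$ is exactly the computation in the paper's proof.
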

\begin{proof}
If $\sigma$ has infinite order, then the result follows from part (3) of Theorem \ref{thm: Fpsigma}.

Assume now that $\sigma$ has finite order.
Then the underlying complex algebra of both $F^p(\sigma)$ and $F^p(\widetilde{\sigma})$ is $C(\overline{\sigma})$.
We need to show that the norms $\|\cdot\|_\sigma$ and $\|\cdot\|_{\widetilde{\sigma}}$ coincide.
Let $f\in C(\overline{\sigma})$.
We claim that $\|f\|_\sigma=\|f\|_{\widetilde{\sigma}}$.
Since $\sigma_n\subseteq\widetilde{\sigma}_n$, it is immediate that $\|f\|_\sigma\leq \|f\|_{\widetilde{\sigma}}$.

For the reverse inequality, let $n\in\N$.
By Lemma \ref{lma: restriction Z_nk}, for every $k\in\N$ the restriction map $\rho^{(nk\to k)}_0\colon F^p(\Z_{nk})\to F^p(\Z_{n})$ is contractive.
Using this at the fourth step, we obtain
\begin{align*}
\|f\|_{\widetilde\sigma_n,n}
&= \sup_{t\in\widetilde\sigma_n} \|(f(t),f(\omega_nt),\ldots,f(\omega_n^{n-1}t))\|_{F^p(\Z_n)} \\
&= \sup_{k\in\N} \sup_{t\in\sigma_{nk}}
\|(f(t),f(\omega_nt),\ldots,f(\omega_n^{n-1}t))\|_{F^p(\Z_n)} \\
&= \sup_{k\in\N} \sup_{t\in\sigma_{nk}}
\left\|\rho^{(nk\to k)}_0 (f(t),f(\omega_{nk}t),\ldots,f(\omega_{nk}^{nk-1}t)) \right\|_{F^p(\Z_{nk})} \\
&\leq \sup_{k\in\N} \sup_{t\in\sigma_{nk}}
\|(f(t),f(\omega_{nk}t),\ldots,f(\omega_{nk}^{nk-1}t))\|_{F^p(\Z_{nk})} \\
&\leq \sup_{m\in\N} \|f\|_{\sigma_m,m} = \|f\|_\sigma.
\end{align*}
It follows that $\|f\|_{\widetilde\sigma} =\sup\limits_{n\in\N}\|f\|_{\widetilde\sigma_n,n} \leq\|f\|_\sigma$, as desired.
\end{proof}

%The following result shows how the saturation of a spectral configuration $\sigma$ is related to $F^p(\sigma)$. As it turns out, $\widetilde{\sigma}$ is determined by the isometric isomorphism type of $F^p(\sigma)$.

\begin{thm}\label{thm: SpConfMapsAlgs}
Let $\sigma$ and $\tau$ be two saturated spectral configurations and let $p\in [1,\I)\setminus\{2\}$.
Then the following conditions are equivalent:
\begin{enumerate}
\item
We have $\tau\leq \sigma$, that is, $\tau_n\subseteq \sigma_n$ for every $n$ in $\NI$.
\item
We have $\tau_1\subseteq \sigma_1$, and
$$\|g_{|\tau_1}\|_\tau\leq \|g\|_\sigma$$
for every $g\in C(\overline\sigma)$.
\item There is a contractive, unital homomorphism
$$\varphi\colon F^p(\sigma)\to F^p(\tau)$$
such that $\varphi(\iota_\sigma)=\iota_\tau$.
\end{enumerate}
\end{thm}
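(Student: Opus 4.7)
The plan is to argue the cycle (1)$\Rightarrow$(2)$\Rightarrow$(3)$\Rightarrow$(1). The first two implications are straightforward unwindings of the definitions; the substantive content is in (3)$\Rightarrow$(1).

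For (1)$\Rightarrow$(2): if $\tau_n\subseteq\sigma_n$ for every $n$, then $\|g|_{\tau_n}\|_{\tau_n,n,p}\leq\|g|_{\sigma_n}\|_{\sigma_n,n,p}$ is immediate (the supremum is taken over a smaller set), and passing to the sup over $n$ gives $\|g|_{\tau_1}\|_\tau\leq\|g\|_\sigma$. For (2)$\Rightarrow$(3): take $\varphi$ to be the literal restriction-of-functions map $g\mapsto g|_{\overline\tau}$, which is a unital algebra homomorphism sending $\iota_\sigma$ to $\iota_\tau$, well-defined into $F^p(\tau)$ and contractive by the norm inequality in (2).

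For (3)$\Rightarrow$(1), I begin by observing that $\spec(\iota_\tau)\subseteq\spec(\iota_\sigma)$ under any unital homomorphism, so $\overline\tau\subseteq\overline\sigma$, i.e., $\tau_1\subseteq\sigma_1$. If $\mbox{ord}(\tau)=\I$, I argue $\mbox{ord}(\sigma)=\I$ as well: otherwise Theorem~\ref{thm: Fpsigma}(4) makes $\|\cdot\|_\sigma$ equivalent to $\|\cdot\|_\I$ on $C(\overline\sigma)$, and contractivity of $\varphi$ would force $\|\cdot\|_{F^p(\Z)}$ to be equivalent to $\|\cdot\|_\I$ on Laurent polynomials in $\iota_\tau$, contradicting $F^p(\Z)\neq C(S^1)$ for $p\neq 2$; saturation then yields $\sigma_n=S^1\supseteq\tau_n$ for all $n$. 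In the remaining case $\mbox{ord}(\tau)<\I$, fix $n\geq 2$ and $t\in\tau_n$, and suppose for contradiction $t\notin\sigma_n$. Composing $\varphi$ with evaluation at the orbit $O_n(t):=\{\omega_n^j t\}_{j=0}^{n-1}$ yields a contractive unital homomorphism $\psi\colon F^p(\sigma)\to F^p(\Z_n)$ whose effect on Laurent polynomials is $q\mapsto(q(\omega_n^j t))_j$. The goal is to produce $f\in F^p(\sigma)$ with $\|\psi(f)\|_{F^p(\Z_n)}>\|f\|_\sigma$, contradicting contractivity.

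I take $f$ as a weighted sum of bump functions concentrated at $O_n(t)$. Saturation of $\sigma$ together with $t\notin\sigma_n$ forces $O_n(t)\cap\sigma_m=\E$ for every $m$ with $n\mid m$ (otherwise $\sigma_m\subseteq\sigma_n$ would return $t$ to $\sigma_n$); since $\mbox{ord}(\sigma)<\I$ and each $\sigma_m$ is closed, I pick $\ep>0$ so small that the balls $B_\ep(\omega_n^j t)$ are pairwise disjoint and $U:=\bigcup_j B_\ep(\omega_n^j t)$ is disjoint from every such $\sigma_m$. Applying Proposition~\ref{prop:RegExp} repeatedly (once per proper divisor $d$ of $n$, combined by a finite-dimensional perturbation argument) I choose $\alpha\in F^p(\Z_n)$ with $\|\alpha\|_{F^p(\Z_n)}>\sup_a\|\rho^{(n\to d)}_a(\alpha)\|_{F^p(\Z_d)}$ for every proper divisor $d$. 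Letting $\phi_j$ be bumps supported in $B_\ep(\omega_n^j t)$ with $\phi_j(\omega_n^j t)=1$, set $f=\sum_j\alpha_j\phi_j|_{\overline\sigma}$, so that $\psi(f)=\alpha$. For $m\leq\mbox{ord}(\sigma)$ with $n\mid m$ the $\sigma_m$-contribution to $\|f\|_\sigma$ vanishes by the choice of $\ep$. For the remaining $m$, with $g:=\gcd(m,n)<n$, a roots-of-unity computation (for $\ep$ sufficiently small) shows that for every $s\in\sigma_m$ the vector $(f(\omega_m^k s))_{k=0}^{m-1}$ is supported on an $\omega_m^{m/g}$-sub-orbit, with nonzero values forming the $\omega_n^{n/g}$-subsequence $\rho^{(n\to g)}_a(\alpha)$ of $\alpha$ (for a suitable $a$), and its $F^p(\Z_m)$-norm coincides with $\|\rho^{(n\to g)}_a(\alpha)\|_{F^p(\Z_g)}$ via the isometric identifications between $F^p(\Z_g)$, $F^p(\Z_m)$ and their sub/quotient structures (Proposition~\ref{prop:inclusion subgp} and Lemma~\ref{lma: restriction Z_nk}). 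By the choice of $\alpha$ this quantity is strictly less than $\|\alpha\|_{F^p(\Z_n)}$, so $\|f\|_\sigma<\|\psi(f)\|_{F^p(\Z_n)}$, yielding the desired contradiction. The hardest technical step is this final identification and the accompanying roots-of-unity arithmetic ensuring that, for $\ep$ small enough, orbits $O_m(s)$ of $\omega_m$-rotations meeting $U$ meet it precisely in sub-orbits of $O_n(t)$; a secondary point requiring care is selecting a single $\alpha$ satisfying Proposition~\ref{prop:RegExp}'s strict inequalities simultaneously for all proper divisors of $n$.
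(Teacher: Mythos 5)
Your proposal is correct and follows essentially the same route as the paper: the substantive implication rests on the identical construction of bump functions supported near the $\omega_n$-orbit of a point $t\in\tau_n$, weighted by a tuple $\alpha$ from Proposition~\ref{prop:RegExp}, together with the same $\gcd(m,n)$ analysis identifying the $\sigma_m$-contribution to the norm with $\|\rho^{(n\to d)}_a(\alpha)\|_{F^p(\Z_d)}$ via Proposition~\ref{prop:inclusion subgp} and Lemma~\ref{lma: restriction Z_nk}. The differences are only organizational: you close the cycle as (3)$\Rightarrow$(1) by composing with the contractive evaluation at the orbit of $t$ to land in $F^p(\Z_n)$ and contradicting contractivity with a single test function, whereas the paper runs (3)$\Rightarrow$(2)$\Rightarrow$(1) via a limiting norm test for membership of $t$ in $\sigma_n$; and the point you flag about choosing one $\alpha$ that works simultaneously for all proper divisors of $n$ is likewise glossed over in the paper's own proof (which applies Proposition~\ref{prop:RegExp} for a $d$ depending on $m$ while implicitly using one $\alpha$ for all $m$).
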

\begin{proof}
The implications `(1) $\Rightarrow$ (2) $\Rightarrow$ (3)' are clear.
For the implication `(3) $\Rightarrow$ (2)', notice that the sets $\sigma_1$ and $\tau_1$ can be canonically identified with
the character spaces of $F^p(\sigma)$ and $F^p(\tau)$, respectively, so that $\sigma_1\supseteq\tau_1$.
Then (2) follows immediately from (3).

Let us show `(2) $\Rightarrow$ (1)'.
For $k\in\N$, define a function $\mu^{(k)}\colon\R\to[0,1]$ as
\[
\mu^{(k)}(x) =\max(0,1-2kx).
\]
These are bump-functions around $0$, with support $\left[-\frac{1}{2k},\frac{1}{2k}\right]$.

Given $n\in\N$, given $\alpha=(\alpha_0,\ldots,\alpha_{n-1})\in\C^n$, and given $t\in S^1$, let us define continuous functions $f_{\alpha,t}^{(k)}\colon S^1\to\C$ for $k\in\N$ as follows:
\[
f_{\alpha,t}^{(k)}(x)
=\sum_{l=0}^{n-1} \alpha_l\cdot\mu^{(kn)}(\dist(x,\omega_n^lt)).
\]
This is a function with $n$ bumps around the points $t,\omega_nt,\ldots,\omega_n^{n-1}t$ taking the values $f_{\alpha,t}^{(k)}(\omega_n^lt)=\alpha_l$ for $l=0,1,\ldots,n-1$.

%Assume there is $k_0$ such that for all $k\geq k_0$, the support of $f_{\alpha,t}^{(k)}$ is contained in $\sigma_1$.
Let $s\in\sigma_m$ and fix $k\in\N$ such that $\frac{1}{km}<\frac{1}{n}$.
We compute $\left\|f_{\alpha,t}^{(k)}\right\|_{m,s}$.
The support of $f_{\alpha,t}^{(k)}$ is the $\frac{1}{2kn}$-neighborhood of $\{t,t\omega_n,\ldots,t\omega_n^{n-1}\}$.
Assume there are $a\in\{0,\ldots,m-1\}$ and $b\in\{0,\ldots,n-1\}$ such that $\omega_m^as$ belongs to the $\frac{1}{2kn}$-neighborhood of $\omega_n^bt$.

Let $\alpha'$ be the $n$-tuple obtained from $\alpha$ by cyclically rotating by $-b$, that is \[\alpha'=(\alpha_b,\alpha_{b+1},\ldots,\alpha_{n-1},\alpha_0,\ldots,\alpha_{b-1}).\]

Set $t'=\omega_n^bt$.
Then $f_{\alpha,t}^{(k)}=f_{\alpha',t'}^{(k)}$.
Set $s'=\omega_m^as$.
Since the norm on $F^p(\Z_m)$ is rotation-invariant, we have $\|\cdot\|_{m,s}=\|\cdot\|_{m,s'}$.
Thus
\[
\left\|f_{\alpha,t}^{(k)}\right\|_{m,s}
= \left\|f_{\alpha',t'}^{(k)}\right\|_{m,s'}.
\]

We have reduced to the case that
$s'$ is in the $\frac{1}{2kn}$-neighborhood of $t'$.
Let $d=\gcd(n,m)$.
Then $\omega_d^ls'$ is in the $\frac{1}{2kn}$-neighborhood of $\omega_d^lt'$ for each $l=0,\ldots,d-1$.
Since $\frac{1}{kn}<\frac{1}{m}$, the value $f_{\alpha',t'}^{(k)}(\omega_m^rs')$ is zero unless $r$ is a multiple of $\frac{m}{d}$.
Let $\delta=\dist(s',t')$.
Let $r=i\frac{m}{d}+j$ for $i\in\{0,\ldots,d-1\}$ and $j\in\{0,\ldots,\frac{m}{d}-1\}$.
Then
\[
f_{\alpha',t'}^{(k)}(\omega_m^rs')
=\begin{cases}
0, &\text{if } j\neq 0 \\
\mu^{(kn)}(\delta)\alpha'_{i\frac{n}{d}}, &\text{if } j=0 \\
\end{cases}.
\]

We define an inclusion map $\iota^{(d\to m)}\colon\C^d\to\C^m$ as follows.
The tuple $\beta\in\C^d$ is sent to the tuple $\iota^{(d\to m)}(\beta)$ which for $r=i\frac{m}{d}+j$ with $i\in\{0,\ldots,d-1\}$ and $j\in\{0,\ldots,\frac{m}{d}-1\}$ is given by
\[
\iota^{(d\to m)}(\beta)_{r}
=\begin{cases}
0, &\text{if } j\neq 0 \\
\beta_{i}, &\text{if } j=0 \\
\end{cases}.
\]
As shown in Proposition~2.3 in \cite{GarThi_functoriality} (here reproduced as
Proposition \ref{prop:inclusion subgp}), the map $\iota^{(d\to m)}$ induces an isometric
embedding $F^p(\Z_d)\to F^p(\Z_m)$.

We define restriction maps $\rho^{(n\to d)}_j\colon\C^n\to\C^d$ for $j\in\{0,\ldots,\frac{n}{d}-1\}$ by sending an $n$-tuple $\beta$ to the $d$-tuple $\rho^{(n\to d)}_r(\beta)$ given by
\[
\rho^{(n\to d)}(\beta)_i=\beta_{i\frac{n}{d}+j}.
\]

It follows that
\begin{align*}
\left\| f_{\alpha',t'}^{(k)}\right\|_{s',m}
%&= \left\| (f_{\alpha',t'}^{(k)}(s'), f_{\alpha',t'}^{(k)}(\omega_ms'), \ldots, f_{\alpha',t'}^{(k)}(\omega_m^{m-1}s')) \right\|_{F^p(\Z_m)} \\
&= \left\|\mu^{(kn)}(\delta) \left(\iota^{(d\to m)}\circ\rho^{(n\to d)}_0(\alpha')\right) \right\|_{F^p(\Z_m)} \\
&= \mu^{(kn)}(\delta) \left\|\rho^{(n\to d)}_0(\alpha')\right\|_{F^p(\Z_d)}.
\end{align*}

Let $b(t,s)\in\{0,\ldots,\frac{n}{d}-1\}$ be the unique number such that $\omega_m^bt$ is in the $\frac{1}{2kn}$-neighborhood of $\{s,\omega_ms,\ldots,\omega_m^{m-1}s\}$. Then
\begin{align*}
\left\| f_{\alpha,t}^{(k)}\right\|_{s,m}
&= \mu^{(kn)}(\delta) \left\|\rho^{(n\to d)}_{b(t)}(\alpha)\right\|_{F^p(\Z_d)}.
\end{align*}

Therefore
\begin{align*}
\left\| f_{\alpha,t}^{(k)} \right\|_m
&= \sup_{s\in\sigma_m} \left\| f_{\alpha,t}^{(k)} \right\|_{m,s} \\
&= \max\limits_{b=0,\ldots,\frac{m}{d}-1} \sup_{s\in\sigma_m, b(s,t)=b}
\mu^{(kn)}(\dist(s,t)) \left\|\rho^{(n\to d)}_{b}(\alpha)\right\|_{F^p(\Z_d)} \\
&= \max\limits_{b=0,\ldots,\frac{m}{d}-1} \mu^{(kn)}(\dist(\omega_m^bt,\{s,\omega_ms,\ldots,\omega_m^{m-1}s\}))
\left\|\rho^{(n\to d)}_{b}(\alpha)\right\|_{F^p(\Z_d)} \\
\end{align*}

Thus
\begin{align*}
& \ \ \ \ \ \ \ \ \ \ \ \ \ \ \ \ \ \ \ \ \ \ \ \
\lim_{k\to\infty}\left\| f_{\alpha,t}^{(k)} \right\|_{\sigma_m,m}\\
&=\max\limits_{b=0,\ldots,\frac{m}{d}-1}
\lim_{k\to\infty} \mu^{(kn)}(\dist(t\omega_m^b,\{s,\omega_ms,\ldots,\omega_m^{m-1}s\}))
\left\|\rho^{(n\to d)}_{b}(\alpha)\right\|_{F^p(\Z_d)} \\
&=\max\limits_{b=0,\ldots,\frac{m}{d}-1}
\mathbbm{1}_{\sigma_m}(\omega_m^bt)
\left\|\rho^{(n\to d)}_{b}(\alpha)\right\|_{F^p(\Z_d)}.
\end{align*}

With this computation at hand, we can `test' whether some $t\in S^1$ belongs to $\sigma_n$.
To that end, let $m\in\N$ and set $d=\gcd(m,n)$.

Assume first that $d<n$. Let $\alpha\in F^p(\Z_n)$ be as in the conclusion
of Proposition \ref{prop:RegExp}, and normalize it so that
$\max\limits_{b=0,\ldots,\frac{m}{d}-1}\left\|\rho^{(n\to d)}_{b}(\alpha)\right\|_{F^p(\Z_d)}=1$.
%For $b=0,\ldots,\frac{m}{d}-1$, $\rho^{(n\to d)}_{b}(\alpha)$ contains at most one non-zero entry and
%so $\|\rho^{(n\to d)}_{b}(\alpha)\|_{F^p(\Z_d)}\leq \|\alpha\|_\infty=1$.
Then
\begin{align*}
\lim_{k\to\infty}\left\| f_{\alpha,t}^{(k)} \right\|_{\sigma_m,m}
&=\max\limits_{b=0,\ldots,\frac{m}{d}-1}
\mathbbm{1}_{\sigma_m}(\omega_m^bt)
\left\|\rho^{(n\to d)}_{b}(\alpha)\right\|_{F^p(\Z_d)}
\leq 1.
\end{align*}

On the other hand, if $d=n$ (so that $n$ divides $m$) then
\[
\lim_{k\to\infty}\left\| f_{\alpha,t}^{(k)} \right\|_{\sigma_m,m}
=\mathbbm{1}_{\sigma_m}(t) \|\alpha\|_{F^p(\Z_n)}.
\]
Thus, when $n$ divides $m$, then $\lim\limits_{k\to\infty}\left\| f_{\alpha,t}^{(k)} \right\|_{\sigma_m,m}>1$ if and only if $t\in\sigma_m$.
Again, since $n$ divides $m$, this implies $t\in\sigma_m$.

In conclusion, we have
\[
\lim\limits_{k\to\infty}\left\| f_{\alpha,t}^{(k)} \right\|_{\sigma}>1 \ \text{ if and only if }\ t\in\sigma_n.
\]
The same computations hold for $\tau$, so that we have
\[
\lim\limits_{k\to\infty}\left\| f_{\alpha,t}^{(k)} \right\|_{\tau}>1 \ \text{ if and only if } \ t\in\tau_n.
\]
By assumption, we have $\left\| f_{\beta,t}^{(k)} \right\|_{\tau}\leq\left\| f_{\beta,t}^{(k)} \right\|_{\sigma}$ for every $\beta$, $t$ and $k$.
Thus, if $t\in\tau_n$, then
\[
1<\lim_{k\to\infty}\left\| f_{\alpha,t}^{(k)} \right\|_{\tau} \leq
\lim_{k\to\infty}\left\| f_{\alpha,t}^{(k)} \right\|_{\sigma},
\]
which implies that $t\in\sigma_n$.
Hence, $\tau_n\subseteq\sigma_n$.
Since this holds for every $n$, we have shown $\tau\leq\sigma$, as desired.
\end{proof}

\begin{cor}\label{cor: isom classif of Fpsigma}
Let $\sigma$ and $\tau$ be two spectral configurations.
The following conditions are equivalent:
\begin{enumerate}
\item There is an isometric isomorphism $\varphi \colon F^p(\sigma)\to F^p(\tau)$ such that $\varphi(\iota_\sigma)=\iota_\tau$.
\item We have $\widetilde\sigma=\widetilde\tau$.
\end{enumerate}
\end{cor}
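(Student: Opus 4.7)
The plan is to reduce the statement to Theorem~\ref{thm: SpConfMapsAlgs} by passing to saturations, which trivializes both directions. The entire mechanism is already in place; the corollary is essentially a bookkeeping exercise once the theorem and the saturation proposition are available.

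First I would invoke the proposition immediately preceding Theorem~\ref{thm: SpConfMapsAlgs}, which supplies canonical isometric isomorphisms
\[
\Phi_\sigma\colon F^p(\sigma)\to F^p(\widetilde\sigma),\qquad \Phi_\tau\colon F^p(\tau)\to F^p(\widetilde\tau),
\]
with $\Phi_\sigma(\iota_\sigma)=\iota_{\widetilde\sigma}$ and $\Phi_\tau(\iota_\tau)=\iota_{\widetilde\tau}$. This immediately reduces both implications to the case in which $\sigma$ and $\tau$ are already saturated, where condition (2) becomes plain equality $\sigma=\tau$.

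For the direction (2)~$\Rightarrow$~(1), if $\widetilde\sigma=\widetilde\tau$ then $\Phi_\tau^{-1}\circ\Phi_\sigma\colon F^p(\sigma)\to F^p(\tau)$ is an isometric isomorphism sending $\iota_\sigma$ to $\iota_\tau$, so (1) holds. For the direction (1)~$\Rightarrow$~(2), given an isometric isomorphism $\varphi\colon F^p(\sigma)\to F^p(\tau)$ with $\varphi(\iota_\sigma)=\iota_\tau$, the composition $\widetilde\varphi=\Phi_\tau\circ\varphi\circ\Phi_\sigma^{-1}$ is an isometric isomorphism $F^p(\widetilde\sigma)\to F^p(\widetilde\tau)$ with $\widetilde\varphi(\iota_{\widetilde\sigma})=\iota_{\widetilde\tau}$. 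In particular, both $\widetilde\varphi$ and $\widetilde\varphi^{-1}$ are contractive unital homomorphisms of the type appearing in condition~(3) of Theorem~\ref{thm: SpConfMapsAlgs}. Applying that theorem to $\widetilde\varphi$ yields $\widetilde\tau\leq\widetilde\sigma$, and applying it to $\widetilde\varphi^{-1}$ yields $\widetilde\sigma\leq\widetilde\tau$; hence $\widetilde\sigma=\widetilde\tau$.

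There is no genuine obstacle in this step: the deep content lies in Theorem~\ref{thm: SpConfMapsAlgs}, whose proof constructs the bump functions $f_{\alpha,t}^{(k)}$ and uses them to recover each $\sigma_n$ from the norm. The only point to watch is that the saturation isomorphisms $\Phi_\sigma,\Phi_\tau$ do transport the distinguished generators $\iota_\sigma,\iota_\tau$ to $\iota_{\widetilde\sigma},\iota_{\widetilde\tau}$, which is precisely the conclusion of the saturation proposition and is what allows the hypothesis $\varphi(\iota_\sigma)=\iota_\tau$ to be inherited by $\widetilde\varphi$ and its inverse.
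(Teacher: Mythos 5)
Your proof is correct and is exactly the intended derivation: the paper states this corollary without proof, as an immediate consequence of the saturation proposition and Theorem~\ref{thm: SpConfMapsAlgs} applied to $\widetilde\varphi$ and $\widetilde\varphi^{-1}$. The only point worth flagging is that, as with Theorem~\ref{thm: SpConfMapsAlgs}, the argument requires $p\neq 2$ (for $p=2$ all these norms are the supremum norm and (1) does not determine the saturation), a hypothesis omitted from the corollary's statement but present in the introduction's summary of this result.
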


\section{Isometric isomorphisms between $L^p$-spaces.}

If $X$ is a set $E$ and $F$ are subsets of $X$, we denote by $E\triangle F$ their symmetric difference, this is,
$E\triangle F=(E\cap F^c) \cup (E^c \cap F)$.

\begin{df} Let $(X,\mu)$ and $(Y,\nu)$ be measure spaces. A \emph{measure class preserving transformation} from $X$ to $Y$
is a measurable function $T\colon X\to Y$ satisfying
$$\mu(T^{-1}(F))=0 \mbox{ if and only if } \nu(F)=0$$
for every measurable set $F\subseteq Y$.\\
\indent A measure class preserving transformation $T\colon X\to Y$ is said to be \emph{invertible}, if it is invertible as a
function from $X$ to $Y$ and its inverse is measurable.
\end{df}

Note that we do not require measure class preserving transformations to preserve the measure; just the null-sets. Also, the
inverse of a measure class preserving transformation is automatically a measure class preserving transformation.

\begin{rem} \label{rem: induced by T} Let $(X,\mathcal{A},\mu)$ and $(Y,\mathcal{B},\nu)$ be measure spaces.
Set
\[\mathcal{N}(\mu)=\{E\in \mathcal{A}\colon \mu(E)=0\}\]
and let $\mathcal{A}/\mathcal{N}(\mu)$ be the quotient of $\mathcal{A}$ by the relation $E\sim F$ if $E\triangle F$
belongs to $\mathcal{N}(\mu)$.
Define $\B/\mathcal{N}(\nu)$ analogously. If $T\colon X\to Y$ is a measure class preserving transformation, then $T$ induces a function
$$T^\ast\colon \mathcal{B}/\mathcal{N}(\nu)\to \mathcal{A}/\mathcal{N}(\mu)$$
given by $T^\ast(F+\mathcal{N}(\nu))=T^{-1}(F)+\mathcal{N}(\mu)$ for all $F$ in $\B$. Moreover, if $T$ is invertible then $T^*$
is invertible, and one has $(T^*)^{-1}=(T^{-1})^*$.\\
\indent The induced map $T^*\colon \mathcal{B}/\mathcal{N}(\nu)\to \mathcal{A}/\mathcal{N}(\mu)$ is an example of an \emph{order-continuous
Boolean homomorphism}. See Definition 313H in \cite{fremlin} and also Definition 4.9 in \cite{phillips analogs Cuntz algebras}, where they
are called $\sigma$-homomorphisms. Under fairly general assumptions on the measure spaces $(X,\mathcal{A},\mu)$ and $(Y,\B,\nu)$, every
such homomorphism $\B/\mathcal{N}(\nu)\to \mathcal{A}/\mathcal{N}(\mu)$ lifts to a measure class preserving transformation $X\to Y$.
See Theorem 343B in \cite{fremlin}.\end{rem}

We now present some examples of isometric isomorphisms between $L^p$-spaces.

\begin{egs} Let $(X,\mu)$ and $(Y,\nu)$ be measure spaces and let $p\in [1,\I)$.
\be\item For a measurable function $h\colon Y\to S^1$, the associated multiplication operator
$m_h\colon L^p(Y,\nu)\to L^p(Y,\nu)$, which is given by
$$m_h(f)(y)=h(y)f(y)$$
for all $f$ in $L^p(Y,\nu)$ and all $y$ in $Y$, is an isometric isomorphism. Indeed, its inverse is easily seen to be $m_{\overline{h}}$.
\item Let $T\colon X\to Y$ be an invertible measure class preserving transformation. Then the linear map
$u_T\colon L^p(X,\mu)\to L^p(Y,\nu)$ given by
$$u_T(f)(y)=\left( \left[\frac{d(\mu\circ T^{-1})}{d\nu}\right](y)\right)^{\frac{1}{p}} f(T^{-1}(y))$$
for all $f$ in $L^p(X,\mu)$ and all $y$ in $Y$, is an isometric isomorphism. Indeed, its inverse is $u_{T^{-1}}$.
\item If $h\colon Y\to S^1$ and $T\colon X\to Y$ are as in (1) and (2), respectively, then
\[m_h\circ u_T\colon L^p(X,\mu)\to L^p(Y,\nu)\]
is an isometric isomorphism with inverse $u_{T^{-1}}\circ m_{\overline{h}}$. \ee\end{egs}

The following result is a particular case of Lamperti's Theorem (see the Theorem in \cite{lamperti}; see also
Theorem 6.9 in \cite{phillips analogs Cuntz algebras}), and it can be regarded as a structure theorem for isometric isomorphisms
between $L^p$-spaces. It states that if $(X,\mathcal{A},\mu)$ and $(Y,\mathcal{B},\nu)$ are complete $\sigma$-finite measure spaces, then the linear operators
of the form $m_h\circ u_T$ are the only isometric isomorphisms between $L^p(X,\mu)$ and $L^p(Y,\nu)$ for $p\in [1,\I)\setminus\{2\}$.
(Although Lamperti's Theorem was originally stated and proved assuming $(X,\mathcal{A},\mu)=(Y,\B,\nu)$, this assumption was never
actually used in his proof.)\\
\indent The version we exhibit here is not the most general possible, but it is enough for our purposes.

\begin{thm} \label{thm: Lamperti}
Let $p\in [1,\I)\setminus \{2\}$. Let $(X,\mathcal{A},\mu)$ and $(Y,\mathcal{B},\nu)$ be complete
$\sigma$-finite standard Borel spaces, and let $\varphi\colon L^p(X,\mu)\to L^p(Y,\nu)$
be an isometric isomorphism. Then there exist a measurable
function $h\colon Y\to S^1$ and an invertible measure class preserving transformation $T\colon X\to Y$ such that
$$\varphi(f)(y)=h(y)\left( \left[\frac{d(\mu\circ T^{-1})}{d\nu}\right](y)\right)^{\frac{1}{p}} f(T^{-1}(y))$$
for all $f$ in $L^p(X,\mu)$ and all $y$ in $Y$. In other words, $\varphi=m_h\circ u_T$. \\
\indent Moreover, the pair $(h,T)$ is unique in the following sense: if $\widetilde{h}\colon Y\to S^1$ and $\widetilde{T}\colon X\to Y$ are,
respectively, a measurable function and an invertible measure class preserving transformation satisfying
$v=m_{\widetilde{h}} \circ u_{\widetilde{T}}$, then $h(y)=\widetilde{h}(y)$ for $\nu$-almost every $y$ in $Y$, and
\[\nu\left(T(E)\triangle \widetilde{T}(E)\right)=0\]
for every measurable subset $E\subseteq X$.
\end{thm}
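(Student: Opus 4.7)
The strategy is to first obtain a Boolean version of the decomposition from Lamperti's classical argument (which works for arbitrary $\sigma$-finite measure spaces, but only produces a Boolean isomorphism of measure algebras rather than a genuine point transformation), and then to invoke the lifting results from \cite{fremlin} mentioned in Remark~\ref{rem: induced by T} to promote this Boolean isomorphism to an actual measure class preserving map $T\colon X\to Y$. The hypothesis that both spaces are complete $\sigma$-finite standard Borel is tailored exactly so that Fremlin's Theorem~343B applies.

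The first step is Lamperti's disjointness identity: for $p\in[1,\I)\SM\{2\}$ and $f,g\in L^p(Y,\nu)$, one has
\[
\|f+g\|_p^p + \|f-g\|_p^p = 2\bigl(\|f\|_p^p + \|g\|_p^p\bigr)
\]
if and only if $fg=0$ $\nu$-almost everywhere. Since $\varphi$ is an isometric isomorphism, it preserves this identity, and hence sends functions with disjoint supports to functions with disjoint supports. Applying this to characteristic functions $\mathbbm{1}_E$ for $E\in\mathcal{A}$, one deduces that $\varphi(\mathbbm{1}_E)$ is supported on a set $\Phi(E)\in \mathcal{B}$ (well defined modulo $\mathcal{N}(\nu)$), and that the assignment $E+\mathcal{N}(\mu)\mapsto \Phi(E)+\mathcal{N}(\nu)$ is an order-continuous Boolean isomorphism $\mathcal{A}/\mathcal{N}(\mu)\to\mathcal{B}/\mathcal{N}(\nu)$. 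Moreover, writing $\varphi(\mathbbm{1}_E) = h_E\, \mathbbm{1}_{\Phi(E)}$, the isometry condition forces $|h_E|^p$ to be (the appropriate restriction of) a single function $w\colon Y\to [0,\I)$ that does not depend on $E$, and patching the phases of the $h_E$'s over an exhausting sequence of finite-measure sets produces a single measurable function $h\colon Y\to S^1$.

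Next, invoke Fremlin's Theorem~343B (as explained in Remark~\ref{rem: induced by T}) to lift the inverse Boolean isomorphism $\Phi^{-1}\colon \mathcal{B}/\mathcal{N}(\nu)\to\mathcal{A}/\mathcal{N}(\mu)$ to an invertible measure class preserving transformation $T\colon X\to Y$ with $T^\ast = \Phi^{-1}$; this is where the standard Borel and $\sigma$-finiteness assumptions are used in an essential way. The function $w$ above is then identified with the Radon-Nikodym derivative $\frac{d(\mu\circ T^{-1})}{d\nu}$ by computing $\|\varphi(\mathbbm{1}_E)\|_p^p = \mu(E)$ for $E$ of finite measure. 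Putting these pieces together, the formula $\varphi(\mathbbm{1}_E) = (m_h\circ u_T)(\mathbbm{1}_E)$ holds for every measurable $E$, and by linearity, density of simple functions, and continuity of both operators, it extends to all of $L^p(X,\mu)$.

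For uniqueness, suppose $m_h\circ u_T = m_{\widetilde h}\circ u_{\widetilde T}$. Composing with $u_{T^{-1}}\circ m_{\overline h}$ on the left shows that $u_{\widetilde T\circ T^{-1}}$ equals a multiplication operator $m_{h'}$ with $h'=h\overline{\widetilde h}$ after the Radon-Nikodym adjustment. Testing this identity on $\mathbbm{1}_F$ for an arbitrary $F\in\mathcal{B}$ of finite measure shows that $\widetilde T\circ T^{-1}$ must fix every measurable set modulo $\mathcal{N}(\nu)$, which gives $\nu(T(E)\bigtriangleup \widetilde T(E))=0$ for every $E\in\mathcal{A}$, and then $h=\widetilde h$ $\nu$-a.e. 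The main technical obstacle throughout is the lifting step: everything else is either a now-standard consequence of the $p\neq 2$ Clarkson-type identity or a routine computation, while the passage from a Boolean isomorphism of measure algebras to an honest measurable point transformation $T$ is exactly the content of \cite{fremlin} that the theorem is designed to exploit.
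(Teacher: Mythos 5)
Your proposal is correct and follows essentially the same route as the paper: obtain the Lamperti-type decomposition into a Boolean isomorphism of measure algebras together with a unimodular phase function, lift the Boolean isomorphism to a genuine point transformation via Fremlin's Theorem~343B (with Corollary~343G handling invertibility and uniqueness of $T$ up to null sets), and identify the weight with the Radon--Nikodym derivative. The only difference is one of packaging: the paper outsources the first step and the uniqueness of $(h,T^\ast)$ to Phillips' ``spatial system'' machinery (Theorem~6.9, Lemmas~6.6 and~6.12 of \cite{phillips analogs Cuntz algebras}), whereas you re-derive that step directly from the Clarkson-type disjointness identity, which is exactly what those cited results encapsulate.
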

\begin{proof}
We will use the language and notation from Sections 5 and 6 in \cite{phillips analogs Cuntz algebras}, where Phillips
develops the material needed to make effective use of Lamperti's Theorem (Theorem 3.1 in \cite{lamperti}). \\
\indent By Theorem 6.9 in \cite{phillips analogs Cuntz algebras}, the invertible isometry $v$ is spatial (see
Definition 6.4 in \cite{phillips analogs Cuntz algebras}). Let $(E,F,\phi,h)$ be a spatial system for $\varphi$. By Lemma
6.12 in \cite{phillips analogs Cuntz algebras} together with the fact that $\varphi$ is bijective, we deduce that
$\mu(X\setminus E)=0$ and $\nu(Y\setminus F)=0$.\\
\indent The assumptions on the measure spaces $(X,\mathcal{A},\mu)$ and $(Y,\mathcal{B},\nu)$ ensure that Theorem 343B in \cite{fremlin} applies,
so the order-continuous Boolean homomorphisms $\phi$ and $\phi^{-1}$ can be lifted to measure class preserving transformations
$f_\phi\colon X\to Y$ and $f_{\phi^{-1}}\colon Y\to X$. By Corollary 343G in \cite{fremlin}, the identities
$$f_{\phi}\circ f_{\phi^{-1}}=\mathrm{id}_Y \ \ \mbox{ and } \ \ f_{\phi^{-1}}\circ f_\phi=\mathrm{id}_X,$$
hold up to $\nu$- and $\mu$-null sets, respectively. Upon redifining them on sets of measure zero (see, for example, Theorem 344B
and Corollary 344C in \cite{fremlin}), we may take $T=f_\phi$ and $T^{-1}=f_{\phi^{-1}}$. \\
\indent We now turn to uniqueness of the pair $(h,T)$. Denote by $\mathcal{A}$ and $\B$
the domains of $\mu$ and $\nu$, respectively, and by $\mathcal{N}(\mu)$ and $\mathcal{N}(\nu)$ the subsets of $\mathcal{A}$
and $\B$ consisting of the $\mu$ and $\nu$-null sets, respectively. Uniqueness
of $h$ up to null sets, and uniqueness of $T^*\colon \mathcal{A}/\mathcal{N}(\mu)\to \mathcal{B}/\mathcal{N}(\nu)$ (see Remark
\ref{rem: induced by T} for the definition of $T^*$) were established in Lemma 6.6 in \cite{phillips analogs Cuntz algebras}. It follows from
Corollary 343G in \cite{fremlin} that $T$ is unique up to changes on sets of measure zero, which is equivalent to the
formulation in the statement.
\end{proof}

Theorem \ref{thm: Lamperti} will be crucial in our study of $L^p$-operator algebras generated by invertible isometries. Since
the invertible isometries of a given $L^p$-space are the main object of study of this work, we take a closer look at
their algebraic structure in the remainder of this section.\\

\begin{df}\label{df: notation}
Let $(X,\mu)$ be a measure space.
\be\item We denote by $L^0(X,S^1)$ the Abelian group (under pointwise multiplication) of all measurable functions $X\to S^1$, where
two such functions that agree $\mu$-almost everywhere on $X$ are considered to be the same element in $L^0(X,S^1)$.
\item We denote by $\Aut_*(X,\mu)$ the group (under composition) of all invertible measure class preserving transformations $X\to X$.
\item For $p\in [1,\I)$, we denote by $\Aut(L^p(X,\mu))$ the group (under composition) of isometric automorphisms of $L^p(X,\mu)$.
Equivalently, $\Aut(L^p(X,\mu))$ is the group of all invertible isometries of $L^p(X,\mu)$. \ee\end{df}

We point out that similar definitions and notation were introduced in \cite{pestov}.

\begin{rem} Let $(X,\mu)$ be a measure space and let $p\in [1,\I)$. Then the maps
$$m\colon L^0(X,S^1)\to \Aut(L^p(X,\mu) \ \ \mbox{ and } \ \ u\colon \Aut_*(X,\mu)\to \Aut(L^p(X,\mu)$$
given by $h\mapsto m_h$ and $T\mapsto u_T$, respectively, are injective group homomorphisms via which we may identify $L^0(X,S^1)$
and $\Aut_*(X,\mu)$ with subgroups of $\Aut(L^p(X,\mu)$.\end{rem}

\begin{prop} \label{prop: semidirect product}
Let $(X,\mathcal{A},\mu)$ be a complete $\sigma$-finite standard Borel space, and let $p\in [1,\I)$.
Then $L^0(X,S^1)$ is a normal subgroup in $\Aut(L^p(X,\mu))$,
and there is a canonical algebraic isomorphism
$$\Aut(L^p(X,\mu))\cong \Aut_*(X,\mu)\rtimes L^0(X,S^1).$$
\end{prop}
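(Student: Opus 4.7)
The plan is to use Theorem~\ref{thm: Lamperti} to set up a bijective correspondence between $\Aut(L^p(X,\mu))$ and the set of pairs $(T,h)$ with $T\in\Aut_*(X,\mu)$ and $h\in L^0(X,S^1)$, and then to verify that this bijection intertwines the composition of isometries with an explicit semidirect product law. (Strictly speaking Theorem~\ref{thm: Lamperti} needs $p\neq 2$, so this is where the proof really applies; the case $p=2$ would have to be treated separately, or excluded from the statement.)

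First I would carry out the key algebraic computation that identifies the conjugation action of $\Aut_*(X,\mu)$ on $L^0(X,S^1)$. Given $T\in\Aut_*(X,\mu)$ and $h\in L^0(X,S^1)$, a direct computation using the definitions of $m_h$ and $u_T$, together with the chain-rule identity
\[
\Bigl[\tfrac{d(\mu\circ T^{-1})}{d\mu}(y)\Bigr]^{1/p}\cdot\Bigl[\tfrac{d(\mu\circ T)}{d\mu}(T^{-1}(y))\Bigr]^{1/p}=1
\]
for $\mu$-almost every $y$, yields
\[
u_T\circ m_h\circ u_T^{-1}=m_{h\circ T^{-1}}.
\]
This shows that $L^0(X,S^1)$ is normal in $\Aut(L^p(X,\mu))$, and exhibits the action $\alpha\colon\Aut_*(X,\mu)\to\Aut(L^0(X,S^1))$ as $\alpha_T(h)=h\circ T^{-1}$, which is the action with respect to which I will form the semidirect product.

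Next I would define
\[
\Phi\colon \Aut_*(X,\mu)\ltimes_\alpha L^0(X,S^1)\longrightarrow \Aut(L^p(X,\mu)),\qquad \Phi(T,h)=m_h\circ u_T,
\]
and check that it is a group homomorphism. Using the conjugation formula above,
\[
\Phi(T_1,h_1)\Phi(T_2,h_2)=m_{h_1}u_{T_1}m_{h_2}u_{T_2}=m_{h_1}m_{h_2\circ T_1^{-1}}u_{T_1}u_{T_2}=m_{h_1\cdot\alpha_{T_1}(h_2)}u_{T_1T_2},
\]
which is exactly $\Phi\bigl((T_1,h_1)(T_2,h_2)\bigr)$. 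Surjectivity of $\Phi$ is then the existence statement of Theorem~\ref{thm: Lamperti}, and injectivity is its uniqueness statement: if $m_h\circ u_T=\id_{L^p(X,\mu)}$ then also $m_h\circ u_T=m_1\circ u_{\id_X}$, so $h=1$ almost everywhere and $T=\id_X$ modulo null sets.

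The main technical point is the normality/conjugation computation, and in particular the cancellation of the two Radon--Nikodym factors; once this is established the rest is a routine assembly of Lamperti's structure theorem with the definition of the semidirect product. A minor care is needed to work in the correct almost-everywhere equivalence classes (of both measurable functions and measure class preserving transformations, cf.\ Remark~\ref{rem: induced by T}), so that all identities are understood modulo null sets and the group $\Aut_*(X,\mu)$ really acts on $L^0(X,S^1)$ as defined in Definition~\ref{df: notation}.
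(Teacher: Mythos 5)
Your proof is correct and follows essentially the same route as the paper's: decompose every invertible isometry as $m_h\circ u_T$ via Theorem~\ref{thm: Lamperti}, establish normality of $L^0(X,S^1)$ through the conjugation identity $u_T\circ m_h\circ u_T^{-1}=m_{h\circ T^{-1}}$ (the paper writes the equivalent $u_{T^{-1}}\circ m_h\circ u_T=m_{h\circ T}$), and read off the semidirect product structure from the existence and uniqueness parts of Lamperti's theorem. You are also right to flag that the argument genuinely requires $p\neq 2$, a restriction the proposition's statement omits but which its proof tacitly assumes.
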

\begin{proof} Since every element in $\Aut(L^p(X,\mu))$ is of the form $m_h\circ u_T$ for some $h\in L^0(X,S^1)$ and some $T\in \Aut_*(X,\mu)$
by Theorem \ref{thm: Lamperti}, it is enough to check that conjugation by $u_T$ leaves $L^0(X,S^1)$ invariant.\\
\indent Given $h\in L^0(X,S^1)$ and $T\in \Aut_*(X,\mu)$, let $f\in L^p(X,\mu)$ and $x$ in $X$. We have
\begin{align*} \left(m_h\circ u_T\right)(f)(x) & = h(x)\left( \left[\frac{d(\mu\circ T^{-1})}{d\mu}\right](x)\right)^{\frac{1}{p}} f(T^{-1}(x))\\
 &= \left( \left[\frac{d(\mu\circ T^{-1})}{d\mu}\right](x)\right)^{\frac{1}{p}} (m_{h\circ T}f)(T^{-1}(x))\\
&= \left(u_T\circ m_{h\circ T}\right)(f)(x).
\end{align*}
We conclude that $u_{T^{-1}}\circ m_h\circ u_T=m_{h\circ T}$, which shows that $L^0(X,S^1)$ is a normal subgroup in $\Aut(L^p(X,\mu))$.\\
\indent The algebraic isomorphism $\Aut(L^p(X,\mu))\cong \Aut_*(X,\mu)\rtimes L^0(X,S^1)$ now follows from Lamperti's Theorem \ref{thm: Lamperti}.
\end{proof}

Adopt the notation of the proposition above. Endow $L^0(X,S^1)$ with the topology of convergence in measure, endow $\Aut_*(X,\mu)$ with the
weak topology, and endow $\Aut(L^p(X,\mu))$ with the strong operator topology. It is shown in \cite{pestov} that these topologies turn these
groups into Polish groups. Using this technical fact, it is shown in Corollary 3.3.46 of \cite{pestov} that the algebraic isomorphism of
the above proposition is in fact an isomorphism of topological groups.

\section{$L^p$-operator algebras generated by an invertible isometry}

We fix $p\in [1,\I)\setminus\{2\}$, and a complete $\sigma$-finite standard Borel space $(X,\mathcal{A},\mu)$.
We also fix an invertible isometry $v\colon L^p(X,\mu)\to L^p(X,\mu)$. We will introduce some
notation that will be used in most the results of this section. We will recall the standing assumptions in the
statements of the main results, but not necessarily in the intermediate lemmas and propositions.\\
\indent Using Theorem \ref{thm: Lamperti}, we choose and fix a measurable function $h\colon X\to S^1$ and an invertible measure
class preserving transformation $T\colon X\to X$ such that $v=m_h\circ u_T$. \\
\ \\
\indent Let $n$ in $\N$. Recall that a point $x$ in $X$ is said to have \emph{period $n$}, denoted $\mathcal{P}(x)=n$,
if $n$ is the least positive integer for which $T^n(x)=x$. If no such $n$ exists, we say that $x$ has \emph{infinite period},
and denote this by $\mathcal{P}(x)=\I$.\\
\indent For each $n$ in $\NI$, set
$$X_n=\{x\in X\colon \mathcal{P}(x)=n\}.$$
Then $X_n$ is measurable, and $T(X_n)\subseteq X_n$ and $T^{-1}(X_n)\subseteq X_n$ for all $n$ in $\NI$. For each $n$ in $\NI$,
denote by $h_n\colon X_n\to S^1$, by $T_n\colon X_n\to X_n$, and by $T^{-1}_n\colon X_n\to X_n$, the
restrictions of $h$, of $T$, and of $T^{-1}$ to $X_n$. Furthermore, we denote by $\mu_{|_{X_n}}$ the restriction of $\mu$
to the $\sigma$-algebra of $X_n$. \\
\indent Set $v_n=m_{h_n}\circ u_{T_n}$. Then $v_n$ is an isometric bijection
of $L^p(X_n,\mu)$. Since $X$ is the disjoint union of the sets $X_n$ for $n\in\NI$, there is an isometric isomorphism
$$L^p(X,\mu)\cong \bigoplus_{n\in \NI} L^p(X_n,\mu_{|_{X_n}})$$
under which $v$ is identified with the invertible isometry
$$\bigoplus_{n\in \NI} v_n\colon \bigoplus_{n\in \NI} L^p(X_n,\mu_{|_{X_n}})\to \bigoplus_{n\in \NI} L^p(X_n,\mu_{|_{X_n}}).$$

The next lemma shows that each of the transformations $T_n$ acts as essentially a shift of
order $n$ on $X_n$, at least when $n<\I$.

\begin{lemma} \label{lma: cross section}
Let $n$ in $\N$. Then there exists a partition $\{X_{n,j}\}_{j=0}^{n-1}$ of $X_n$ consisting of measurable subsets
such that
$$T^{-1}(X_{n,j})=X_{n,j+1}$$
for all $j\in \N$, with indices taken modulo $n$.
\end{lemma}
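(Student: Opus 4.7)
The plan is to construct a measurable cross-section $X_{n,0}\subseteq X_n$ for the free $\Z/n$-action generated by $T|_{X_n}$, and then define $X_{n,j}=T^{-j}(X_{n,0})$ for $j=1,\ldots,n-1$. By definition of $X_n$, the $T$-orbit of every point of $X_n$ consists of exactly $n$ distinct points, so the task reduces to measurably selecting one representative from each orbit.

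To build the transversal, I would exploit the standing hypothesis that $(X,\mathcal{A})$ is a standard Borel space. By Kuratowski's classical theorem, there exists a Borel injection $f\colon X\to[0,1]$. Since $T$, and hence each iterate $T^k$, is measurable, and $f$ is Borel, the set
\[
X_{n,0}=\{x\in X_n\colon f(x)<f(T^k(x))\text{ for all }k=1,\ldots,n-1\}
\]
is measurable. By injectivity of $f$, each $T$-orbit in $X_n$ contains a unique $f$-minimizing element, and $X_{n,0}$ is precisely the collection of these representatives.

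For $j=1,\ldots,n-1$ set $X_{n,j}=T^{-j}(X_{n,0})$; measurability is clear. For pairwise disjointness, if $x\in X_{n,i}\cap X_{n,j}$ with $0\leq i<j\leq n-1$, then $T^i(x)$ and $T^j(x)$ would be two distinct points of $X_{n,0}$ lying in the same $T$-orbit, contradicting the choice of $X_{n,0}$. For coverage, given $x\in X_n$, let $y\in X_{n,0}$ be the unique orbit representative; then $y=T^j(x)$ for a unique $j\in\{0,\ldots,n-1\}$, so $x=T^{-j}(y)\in X_{n,j}$. Finally, the shift identity $T^{-1}(X_{n,j})=T^{-(j+1)}(X_{n,0})=X_{n,j+1}$ is immediate for $j<n-1$, and for $j=n-1$ it reads $T^{-1}(X_{n,n-1})=T^{-n}(X_{n,0})=X_{n,0}$, since $T^n$ fixes every point of $X_n$; this is consistent with interpreting indices modulo $n$.

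The only mildly delicate step is verifying that $X_{n,0}$ is measurable, which is where the standard Borel assumption enters through the existence of the Borel separating function $f$; once the cross-section has been obtained, the rest of the argument is purely combinatorial because all orbits have constant finite length $n$.
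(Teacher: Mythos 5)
Your proof is correct. The overall shape is the same as the paper's: both arguments reduce the lemma to producing a measurable cross-section $X_{n,0}\subseteq X_n$ meeting each $T$-orbit exactly once, and then set $X_{n,j}=T^{-j}(X_{n,0})$, with the partition and shift properties following from the fact that every orbit in $X_n$ has exactly $n$ distinct points and $T^n=\mathrm{id}$ on $X_n$. The difference lies in how the cross-section is obtained: the paper simply cites Theorem~1.2 of Glasner--Weiss for the existence of a Borel cross-section for the periodic part of a Borel automorphism, whereas you construct it explicitly by choosing a Borel injection $f\colon X\to[0,1]$ (available because $(X,\mathcal{A})$ is standard Borel) and selecting the unique $f$-minimizing point of each orbit, namely $X_{n,0}=\{x\in X_n\colon f(x)<f(T^k(x))\text{ for }k=1,\ldots,n-1\}$. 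Your version is more self-contained and entirely elementary for the case at hand, since all orbits have the same finite length $n$ and the minimizer is picked out by finitely many measurable conditions; the cited theorem is more general (it handles the full periodic part at once and is part of a broader cross-section machinery) but is overkill here. One small point worth making explicit if you write this up: $f\circ T^k$ is measurable with respect to the completed $\sigma$-algebra $\mathcal{A}$ because $f$ is Borel and $T^k$ is $\mathcal{A}$-measurable, so $X_{n,0}$ indeed lies in $\mathcal{A}$; and the identity $T^{-n}(X_{n,0})=X_{n,0}$ uses that $T$ maps $X_n$ onto itself, which follows from $T(X_n)\subseteq X_n$, $T^{-1}(X_n)\subseteq X_n$ and invertibility of $T$.
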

\begin{proof}
Note that every point of $X_n$ is periodic. It follows from Theorem 1.2 in \cite{glasner weiss} that there is a Borel cross-section,
this is, a Borel set $X_{n,0}\subseteq X_n$ such that each orbit of $T$ intersects $X_{n,0}$ exactly once. The result follows by setting
$X_{n,j}=T^{-j}(X_{n,0})$ for $j=1,\ldots,n-1$.\end{proof}

The following lemma allows us to assume that $h$ is identically equal to 1 on the sets
$X_{n,j}$ with $n<\I$ and $j>0$. The idea of the proof is to ``undo'' a certain shift on the space, which is reflected on the
construction of the functions $g_n$. One cannot undo the shift completely, and there is a remainder
left over which is concentrated on the first set $X_{n,0}$.

\begin{lemma} \label{lma: trivialization of h}
There exists a measurable function $g\colon X\to S^1$ such that the function
\[g\cdot (\overline{g}\circ T^{-1})\cdot h\colon X\to S^1\] is
identically equal to 1 on $X_{n,j}$ whenever $n<\I$ and $j>0$.
\end{lemma}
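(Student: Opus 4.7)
The plan is to construct $g$ separately on each piece of the partition $X=X_\infty\sqcup\bigsqcup_{n<\infty} X_n$, where each $X_n$ is further decomposed via Lemma \ref{lma: cross section} into cross-sections $X_{n,0},\ldots,X_{n,n-1}$ with $T^{-1}(X_{n,j})=X_{n,j+1\bmod n}$. On $X_\infty$ the lemma imposes no condition, so I simply set $g\equiv 1$ there. The real work is to fix, for each finite $n$, the cross-section $X_{n,0}$ as an initial condition and propagate along the $n$-cycle of $T^{-1}$.

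Rewrite the target identity $g\cdot(\overline{g}\circ T^{-1})\cdot h=1$ (valid on $X_{n,j}$ with $j>0$) as $g(x)=g(T^{-1}(x))\,\overline{h(x)}$. Since $x\in X_{n,j}$ implies $T^{-1}(x)\in X_{n,j+1\bmod n}$, iterating this recursion backwards from $j=n-1$ down to $j=1$ expresses $g|_{X_{n,j}}$ in terms of $g|_{X_{n,0}}$. Setting $g\equiv 1$ on $X_{n,0}$, I define, for $0<j\leq n-1$ and $x\in X_{n,j}$,
\[
g(x)=\prod_{k=0}^{n-j-1}\overline{h(T^{-k}(x))}.
\]
Each factor is a composition of the measurable function $h$ with the measurable map $T^{-k}$, so $g$ is measurable on $X_{n,j}$; piecing together over the countable partition yields a measurable function $g\colon X\to S^1$.

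It remains to verify the identity. For $x\in X_{n,j}$ with $0<j<n-1$ we have $T^{-1}(x)\in X_{n,j+1}$ with $j+1>0$, and the formula gives
\[
g(T^{-1}(x))=\prod_{k=0}^{n-j-2}\overline{h(T^{-k-1}(x))}=\prod_{\ell=1}^{n-j-1}\overline{h(T^{-\ell}(x))},
\]
so that $g(x)\,\overline{g(T^{-1}(x))}\,h(x)=\overline{h(x)}\cdot h(x)=1$ after cancellation of the telescoping product. For $j=n-1$ we have $T^{-1}(x)\in X_{n,0}$, where $g=1$, and the formula gives $g(x)=\overline{h(x)}$, so again $g(x)\cdot 1\cdot h(x)=1$.

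I expect no genuine obstacle; the only point requiring care is the measurability of the cross-sections $X_{n,0}$ and of the iterates $T^{-k}$, which are handed to us by Lemma \ref{lma: cross section} and the fact that $T\in\Aut_*(X,\mu)$. The essential mechanism is that on a finite $T^{-1}$-cycle the obstruction to trivialising $h$ concentrates on a single cross-section, exactly as in the standard cocycle argument for periodic flows.
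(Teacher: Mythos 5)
Your proof is correct and follows essentially the same route as the paper's: set $g\equiv 1$ on $X_\infty$ and untwist the cocycle $h$ along each finite $T^{-1}$-cycle by a telescoping product of values of $h$ along the orbit, so that the obstruction concentrates on the single cross-section $X_{n,0}$. The only (cosmetic) difference is that the paper anchors its product at $X_{n,0}$ using forward iterates $h\circ T^k$, while you anchor at $X_{n,0}$ using backward iterates $\overline{h}\circ T^{-k}$; both solve the same recursion $g=(g\circ T^{-1})\cdot\overline{h}$ on the sets $X_{n,j}$ with $j>0$, and your verification of the telescoping cancellation is the "routine exercise" the paper leaves to the reader.
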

\begin{proof}
Let $n<\I$. Adopt the convention that $h\circ T^0$ is the function identically equal to 1 (this unusual convention is adopted
so that the formula below comes out nicer). Using indices modulo $n$, we define $g_n\colon X_n\to S^1$ by
$$g_n=\sum_{j=0}^{n-1} \mathbbm{1}_{X_{n,j}}\cdot(h\circ T^{0})\cdot(h\circ T)\cdots (h\circ T^{j-1}).$$
We point out that the term corresponding to $j=0$ is
$$\mathbbm{1}_{X_{n,0}}\cdot(h\circ T^{0})\cdot(h\circ T)\cdots (h\circ T^{n-1}).$$
Note that $g_n$ is well defined because the sets $X_{n,j}$ are pairwise disjoint for $j=0,\ldots,n-1$. For $n=\I$, set
$g_\I=\mathbbm{1}_{X_\I}$.\\
\indent Finally, we set
$$g=\sum_{n=1}^\I \mathbbm{1}_{X_n}\cdot g_n,$$
which is well defined because the sets $X_n$ are pairwise disjoint for $n\in\NI$. It is a routine exercise to check
that $g$ has the desired properties.
\end{proof}

\begin{rem}\label{rem: got rid of h} Let $g\colon X\to S^1$ be as in Lemma \ref{lma: trivialization of h}. A straightforward
computation shows that
$$m_{g}\circ v\circ m_{\overline{g}}= m_{g\cdot (\overline{g}\circ T^{-1})\cdot h}\circ u_T.$$
In particular, $v$ is conjugate, via the invertible isometry $m_g$, to another invertible isometry whose multiplication
component is identically equal to 1 on $X_{n,j}$ whenever $n<\I$ and $j>0$. Since $v$
and $m_g\circ v\circ m_{\overline{g}}$ generate isometrically isomorphic Banach subalgebras of $\B(L^p(X,\mu))$, we may shift our
attention to the latter isometry. Upon relabeling its multiplication component, we may and will assume that $h$ itself is
identically equal to 1 on $X_{n,j}$ whenever $n<\I$ and $j>0$. \end{rem}

Our next reduction refers to the set transformation $T$: we show that we can assume that $T$ preserves the measure of the measurable subsets
of $X\setminus X_\I$. Recall that $\mathcal{A}$ denotes the domain of $\mu$.

\begin{lemma}\label{lma: equivalent measure} There is a measure $\nu$ on $(X,\mathcal{A})$ such that
\be\item For every measurable set $E\subseteq X\setminus X_\I$, we have $\nu(T(E))=\nu(E)$.
\item For every measurable set $E\subseteq X$, we have $\nu(E)=0$ if and only if $\mu(E)=0$.\ee
Moreover, $L^p(X,\mu)$ is canonically isometrically isomorphic to $L^p(X,\nu)$.
\end{lemma}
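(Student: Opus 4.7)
The plan is to build $\nu$ piece by piece, using the fact that on each periodic component $X_n$ (with $n<\infty$) the transformation $T$ cycles through the measurable pieces $X_{n,0},X_{n,1},\ldots,X_{n,n-1}$ supplied by Lemma~\ref{lma: cross section}. On each such cycle we simply transport $\mu|_{X_{n,0}}$ around the orbit via $T$, and on $X_\infty$ we leave $\mu$ unchanged.

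More concretely, I would set $\nu|_{X_\infty}=\mu|_{X_\infty}$, and for each $n<\infty$ and each $j\in\{0,\ldots,n-1\}$, note that by Lemma~\ref{lma: cross section} the restriction $T^j|_{X_{n,j}}\colon X_{n,j}\to X_{n,0}$ is a bimeasurable bijection. I would then define
\[
\nu(E)=\mu\bigl(T^{j}(E)\bigr) \qquad \text{for measurable } E\subseteq X_{n,j},
\]
and extend $\nu$ to all of $X_n$ by additivity along the partition, and to all of $X$ by additivity along $X=X_\infty\sqcup\bigsqcup_{n\in\N}X_n$. Because $T^j$ is a measurable bijection of $X_{n,j}$ onto $X_{n,0}$, this produces a genuine measure on each piece; $\sigma$-finiteness on $X$ follows from $\sigma$-finiteness of $\mu$ together with the countability of the decomposition.

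The two required properties are then routine checks. For the measure class assertion (2), since $T$ and $T^{-1}$ are measure class preserving, $\mu(T^j(E))=0$ if and only if $\mu(E)=0$, and this propagates through the decomposition to give $\nu(E)=0\Leftrightarrow\mu(E)=0$ for every measurable $E\subseteq X$. For the $T$-invariance (1), it suffices to check it on sets $E\subseteq X_{n,j}$ with $n<\infty$, since any measurable $E\subseteq X\setminus X_\infty$ decomposes this way. If $j\geq 1$, then $T(E)\subseteq X_{n,j-1}$ and
\[
\nu(T(E))=\mu(T^{j-1}(T(E)))=\mu(T^{j}(E))=\nu(E);
\]
if $j=0$, then $T(E)\subseteq X_{n,n-1}$ and $\nu(T(E))=\mu(T^{n-1}(T(E)))=\mu(T^{n}(E))=\mu(E)=\nu(E)$, using $T^n|_{X_n}=\id$.

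Finally, for the canonical isometric isomorphism, the equivalence of $\mu$ and $\nu$ gives a (positive, measurable, a.e.\ finite) Radon--Nikodym derivative $\varphi=\frac{d\mu}{d\nu}$, and the map
\[
\Phi\colon L^p(X,\mu)\longrightarrow L^p(X,\nu), \qquad \Phi(f)=\varphi^{1/p}f,
\]
is an isometric isomorphism, since $\int|\varphi^{1/p}f|^{p}\,d\nu=\int|f|^{p}\varphi\,d\nu=\int|f|^{p}\,d\mu$. There is no serious obstacle in this argument; the only thing to be careful about is the bookkeeping of indices modulo $n$ in the $T$-invariance computation, which is why I would treat the case $j=0$ separately.
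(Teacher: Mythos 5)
Your construction of $\nu$ is the same as the paper's: the paper defines $\nu_n(E)=\sum_{j=0}^{n-1}\mu(T^{-j}(E)\cap X_{n,0})$, which (using $T^{-j}(X_{n,k})=X_{n,k+j}$ and $T^n=\mathrm{id}$ on $X_n$) reduces precisely to your piecewise formula $\nu(E)=\mu(T^j(E))$ for $E\subseteq X_{n,j}$, and the verification of the two properties and the Radon--Nikodym isometry $f\mapsto (d\mu/d\nu)^{1/p}f$ are identical. The proposal is correct and takes essentially the same route.
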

\begin{proof} For every $n$ in $\N$, we define a measure $\nu_n$ on $X_n$ by
\[ \nu_n(E)= \sum_{j=0}^{n-1}\mu(T^{-j}(E)\cap X_{n,0})\]
for every $E$ in $\mathcal{A}$. It is clear that $\nu_n(T(E))=\nu_n(E)$ for every measurable set $E$. \\
\indent Set $\nu=\sum\limits_{n\in\N}\nu_n+\mu|_{X_\I}$. It is clear that $\nu\circ T=\nu$ on $X\setminus X_\I$, so condition (1)
is satisfied. In order to check condition (2),
assume that $\nu(E)=0$ for some measurable set $E$. If $n\in\N$, then $\nu_n(E)=0$, and thus
$$\mu(T^{-j}(E)\cap X_{n,0})=\mu(T^{-j}(E\cap X_{n,j}))=0$$
for every $j=0,\ldots,n-1$. Using that $T$ preserves null-sets, we get $\mu(E\cap X_{n,j})=0$ for
every $j=0,\ldots,n-1$. Since the sets $X_{n,j}$ form a partition of $X_n$, we deduce that $\mu(E\cap X_n)=0$ for $n<\I$.
Since we also have $\mu(E\cap X_\I)=0$ and the sets $X_n$ form a partition of $X$, we conclude that $\mu(E)=0$. \\
\indent Conversely, if $E$ is measurable and $\mu(E)=0$, then $\mu(E\cap X_\I)=0$ and $\mu(T^{-j}(E))=0$ for all $j$ in $\Z$.
Thus $\nu_n(E)=0$ for all $n$ in $\N$, whence $\nu(E)=0$.  \\
\indent The last claim is a standard fact. Denote by $f$ the Radon-Nikodym derivative $f=\frac{d\mu}{d\nu}$, and
define linear maps $\varphi_p\colon L^p(X,\mu)\to L^p(X,\nu)$ and $\psi_p\colon L^p(X,\nu)\to L^p(X,\mu)$ by
$$\varphi_p(\xi)= \xi f^{\frac{1}{p}} \ \ \mbox{ and } \ \ \psi_p(\eta)= \eta f^{-\frac{1}{p}}$$
for all $\xi$ in $L^p(X,\mu)$ and all $\eta$ in $L^p(X,\nu)$. Then $\varphi_p$ and $\psi_p$ are mutual inverses. Moreover, we have
\[\|\varphi_p(\xi)\|^p_p=\int_X |\xi|^p f \ d\nu = \int_X |\xi|^p \ d\mu = \|\xi\|_p^p\]
for all $\xi$ in $L^p(X,\mu)$. We conclude that $\varphi_p$ is the desired isometric isomorphism.
\end{proof}

\begin{rem}\label{rem: T preserves measure} Adopt the notation of Lemma \ref{lma: equivalent measure}. It is immediate to check
 that if $\varphi_p\colon L^p(X,\mu)\to L^p(X,\nu)$ is the canonical isometric isomorphism, then
$$\varphi_p(v)(\eta)(x)=h(x)\eta(T^{-1}(x))$$
for all $\eta$ in $L^p(X,\nu)$ and all $x$ in $X\setminus X_\I$. (Note the absence of the correction term which is present in
the statement of Theorem \ref{thm: Lamperti}.) Since the Banach algebras generated by $v$ and $\varphi_p(v)$ are isometrically
isomorphic, we have therefore shown that we can always assume that $T$ is measure preserving on $X\setminus X_\I$.
\end{rem}

\begin{nota} If $g\colon X\to \C$ is a measurable function, we denote by $\mbox{ran}(g)$ the range of $g$, and by $\mbox{essran}(g)$
its essential range, which is defined by
$$\mbox{essran}(g)=\left\{z \in \C\colon \mu\left(g^{-1}(B_\ep(z))\right)>0 \mbox{ for all } \ep>0\right\}.$$
It is well known that the spectrum of the multiplication operator $m_g$ is $\mbox{essran}(g)$, and that
\[\mbox{essran}(g)\subseteq \overline{\mbox{ran}(g)}.\]\end{nota}

\begin{rem}\label{rem: spectra}
We recall the following fact about spectra of elements in Banach algebras. If $A$ is a unital Banach algebra, $B$ a subalgebra containing
the unit of $A$, and $a$ is an element of $B$ such that $\spec_B(a)\subseteq S^1$, then $\spec_A(a)=\spec_B(a)$. In other words, for
elements whose spectrum (with respect to a given algebra) is contained in $S^1$, their spectrum does not change when the element is
regarded as an element of a larger or smaller algebra.
\end{rem}

The following easy lemma will be used a number of times in the proof of Theorem \ref{thm: Fpvn},
so we state and prove it separately.

\begin{lemma}\label{lma: psi isometry}
Let $n$ in $\N$, let $(Y,\nu)$ be a measure space and let $S\colon Y\to Y$ be a measurable map.
Let $E$ be a measurable subset of $Y$ with $0<\nu(E)<\I$ such that $E,S^{-1}(E),\ldots,S^{n-1}(E)$ are pairwise disjoint. Define a linear
map $\psi_E\colon \ell^p_n\to L^p(X,\mu)$ by
$$\psi_E(\eta)=\frac{1}{\nu(E)^{\frac{1}{p}}}\sum_{j=0}^{n-1}\eta_j u_S^j(\mathbbm{1}_E)$$
for all $\eta$ in $\ell^p_n$. Then $\psi_E$ is an isometry.\end{lemma}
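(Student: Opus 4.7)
The plan is a direct computation, built on two observations. First, I would verify that for each $j\in\{0,1,\ldots,n-1\}$, the function $u_S^j(\mathbbm{1}_E)$ is supported on $S^j(E)$. This follows from the formula for $u_S$: since $u_S(\mathbbm{1}_E)(y)$ involves the factor $\mathbbm{1}_E(S^{-1}(y))$, which vanishes unless $y\in S(E)$, iterating gives support in $S^j(E)$. Combined with the hypothesis that $E, S(E), \ldots, S^{n-1}(E)$ are pairwise disjoint (which I read as the intended meaning of the hypothesis in the statement), the functions $\{u_S^j(\mathbbm{1}_E)\}_{j=0}^{n-1}$ are pairwise disjointly supported in $L^p(Y,\nu)$.

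Second, I would use the fact that $u_S$ is an isometry on $L^p(Y,\nu)$, so that for every $j$,
\[
\|u_S^j(\mathbbm{1}_E)\|_p = \|\mathbbm{1}_E\|_p = \nu(E)^{1/p}.
\]

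Combining these, for any $\eta=(\eta_0,\ldots,\eta_{n-1})\in\ell^p_n$, the $p$-norm of $\psi_E(\eta)$ decomposes as an $\ell^p$-direct sum over the disjoint supports:
\[
\|\psi_E(\eta)\|_p^p
= \frac{1}{\nu(E)} \sum_{j=0}^{n-1} |\eta_j|^p\, \|u_S^j(\mathbbm{1}_E)\|_p^p
= \frac{1}{\nu(E)} \sum_{j=0}^{n-1} |\eta_j|^p\, \nu(E)
= \|\eta\|_p^p,
\]
which establishes the isometry claim.

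There is no real obstacle here; the lemma is essentially a bookkeeping statement packaging two standard facts (disjointness of supports and isometry of $u_S$) so that they can be invoked cleanly later in the proof of Theorem~\ref{thm: Fpvn}. The only subtlety worth flagging is conventional: one should read the disjointness hypothesis as asserting disjointness of the forward orbit $E, S(E), \ldots, S^{n-1}(E)$, since that is precisely what pins down the supports of the vectors $u_S^j(\mathbbm{1}_E)$ under the convention for $u_S$ fixed in Section~4.
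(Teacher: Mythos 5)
Your proof is correct and follows essentially the same route as the paper's: decompose $\|\psi_E(\eta)\|_p^p$ over the pairwise disjoint supports of the $u_S^j(\mathbbm{1}_E)$ and use that $u_S$ is isometric, so each term contributes $|\eta_j|^p\nu(E)$. Your remark about reading the disjointness hypothesis as applying to the forward images $E, S(E),\ldots,S^{n-1}(E)$ is the right call, since that is what pins down the supports and is exactly how the lemma is invoked later in the proof of Theorem~\ref{thm: Fpvn}.
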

\begin{proof} Note that $\psi_E(\eta)$ is a measurable function for all $\eta$ in $\ell^p_n$. For
$\eta$ in $\ell^p_n$, we use that the sets $E,S^{-1}(E),\ldots,S^{n-1}(E)$ are pairwise disjoint at the first step to get
$$\|\psi_E(\eta)\|_p^p=\frac{1}{\mu(E)}\sum_{j=0}^{N-1}|\eta_j|^p\|u_T^j(\mathbbm{1}_E)\|_p^p=\|\eta\|_p^p,$$
so $\psi_E$ is an isometry and the result follows.\end{proof}

Denote by $F^p(v,v^{-1})$ the unital Banach subalgebra of $\B(L^p(X,\mu))$ generated by $v$ and $v^{-1}$. Then
$F^p(v,v^{-1})$ is an $L^p$-operator algebra and there is a canonical algebra homomorphism $\C[\Z]\to F^p(v,v^{-1})$ given by $x\mapsto v$.

\begin{thm}\label{thm: Fpvn}
Let $N$ in $\N$. Then $\spec(v_N)$ is a (possibly empty) closed subset of $S^1$, which is
invariant under rotation by $\omega_N$. Moreover,
the Gelfand transform defines a canonical isometric isomorphism
$$\Gamma\colon F^p(v_N,v_N^{-1})\to \left(C(\spec(v_N)),\|\cdot\|_{\spec(v_N),N}\right).$$
(See Definition \ref{df: Fpsigma} for the definition of the norm $\|\cdot\|_{\spec(v_N),N}$.)
\end{thm}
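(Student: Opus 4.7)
My plan is to reduce $v_N$ to a clean form, recognize it as a fiberwise weighted cyclic shift over $X_{N,0}$, compute the norm of any Laurent polynomial in $v_N, v_N^{-1}$ through this identification, and then conclude via the Gelfand transform and density.

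I would begin by applying Remarks \ref{rem: got rid of h} and \ref{rem: T preserves measure} to assume that $h \equiv 1$ on $X_{N,j}$ for each $j \geq 1$ and that $T|_{X_N}$ preserves $\mu|_{X_N}$. Writing $\phi = h|_{X_{N,0}} \colon X_{N,0}\to S^1$, a direct computation gives $v_N^N = m_{H_N}$ for an $S^1$-valued function $H_N$ built by transporting $\phi$ along $T$; in particular $\spec(v_N^N) = \mbox{essran}(\phi)$. Since $v_N$ is an invertible isometry, $\spec(v_N)\subseteq S^1$ and is compact (and is empty precisely when $\mu(X_N)=0$). To show $\omega_N$-invariance, I would set $g = \sum_{j=0}^{N-1}\omega_N^{-j}\mathbbm{1}_{X_{N,j}}$ and verify by direct calculation (using $T^{-1}(X_{N,j}) = X_{N,j+1}$ and the reduction on $h$) that $m_g v_N m_g^{-1} = \omega_N v_N$. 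Since $m_g$ is an invertible isometry, this yields $\spec(v_N) = \omega_N\spec(v_N)$, and combined with the identity for $v_N^N$ one obtains $\spec(v_N) = \{z\in S^1 : z^N\in\mbox{essran}(\phi)\}$.

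The key step is to identify $v_N$ with a fiberwise weighted cyclic shift. Since $T|_{X_N}$ preserves measure, the translation maps $u_T^j\colon L^p(X_{N,0})\to L^p(X_{N,j})$ assemble into an isometric isomorphism
\[
\Psi\colon L^p(X_N) \to L^p\bigl(X_{N,0}; \ell^p_N\bigr),
\qquad \Psi(\xi)(x)_j = \xi(T^{-j}(x)).
\]
A direct computation using the reductions shows $(\Psi v_N\Psi^{-1}\eta)(x) = V(x)\eta(x)$, where $V(x)\in M_N(\C)$ is the matrix with $V(x)_{0,1}=\phi(x)$, $V(x)_{k,k+1}=1$ for $k=1,\ldots,N-2$, $V(x)_{N-1,0}=1$, and all other entries zero. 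For each $t\in S^1$ with $t^N = \phi(x)$, the diagonal isometry $D(t) = \diag(1, t^{N-1}, t^{N-2}, \ldots, t)$ on $\ell^p_N$ satisfies $D(t) V(x) D(t)^{-1} = t\cdot s$, where $s$ denotes the canonical cyclic shift that generates $F^p(\Z_N)$.

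For a Laurent polynomial $f$ in $v_N, v_N^{-1}$, the operator $\Psi f(v_N)\Psi^{-1}$ acts pointwise by $f(V(x))$, so its operator norm equals the essential supremum of $\|f(V(x))\|_{\B(\ell^p_N)}$ over $x\in X_{N,0}$. By the diagonal similarity together with Proposition \ref{prop: shift invariance norm on FpZn}, $\|f(V(x))\|_{\B(\ell^p_N)}$ equals $\|(f(t_x), f(\omega_N t_x), \ldots, f(\omega_N^{N-1}t_x))\|_{F^p(\Z_N)}$ for any choice of $t_x$ with $t_x^N = \phi(x)$; this depends only on $\phi(x)$ and is continuous in it, so the essential supremum coincides with the supremum over $s\in\mbox{essran}(\phi)$, yielding
\[
\|f(v_N)\| = \sup_{t\in\spec(v_N)} \|(f(t), f(\omega_N t), \ldots, f(\omega_N^{N-1}t))\|_{F^p(\Z_N)} = \|f\|_{\spec(v_N), N}.
\]
Together with the inequality $\|\cdot\|_\infty\leq\|\cdot\|_{F^p(\Z_N)}$ from Example \ref{eg: FpZn}, this norm identity shows that every $t\in\spec(v_N)$ gives a character $v_N\mapsto t$ of $F^p(v_N, v_N^{-1})$, so the maximal ideal space is identified with $\spec(v_N)$. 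Since Laurent polynomials are dense in $F^p(v_N,v_N^{-1})$ by construction and in $(C(\spec(v_N)),\|\cdot\|_{\spec(v_N),N})$ by Stone--Weierstrass and Proposition \ref{prop: norm sigma n}(2), the identity extends by continuity to the desired isometric isomorphism. I expect the main obstacle to be verifying the fiberwise matrix form of $v_N$ together with the diagonal isometric similarity $D(t)V(x)D(t)^{-1}=t\cdot s$: both rely crucially on the two reductions, namely trivializing $h$ away from $X_{N,0}$ and ensuring $T$ preserves measure so that $\Psi$ is isometric.
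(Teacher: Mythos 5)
Your proof is correct, and it reaches the paper's central norm identity by a genuinely different (and arguably cleaner) route. The paper performs the same two reductions (trivializing $h$ off $X_{N,0}$ and making $T$ measure-preserving) and works with exactly the weighted cyclic shift matrices you call $V(x)$ (there denoted $w_n$, indexed by a dense sequence $(z_n)$ in $\mathrm{essran}(h|_{X_{N,0}})$); but instead of your global identification $\Psi\colon L^p(X_N)\to L^p(X_{N,0};\ell^p_N)$, it proves $\|f(v_N)\|=\sup_n\|f(w_n)\|$ by a two-sided $\varepsilon$--$\delta$ argument: the inequality $\geq$ uses the isometries $\psi_E$ of Lemma \ref{lma: psi isometry} on small sets where $h$ is nearly constant, and the inequality $\leq$ uses a simultaneous step-function approximation of $h$ and of an almost-norming vector. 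Your decomposable-operator formulation packages both halves into the single statement that a fiberwise matrix multiplication operator on $L^p(X_{N,0};\ell^p_N)$ has norm equal to the essential supremum of the fiberwise norms; since $\|f(V(x))\|_{\B(\ell^p_N)}$ is a continuous function of $\phi(x)$ alone, that essential supremum is the supremum over $\mathrm{essran}(\phi)$, which is your key observation. Be aware that the lower bound in this ``standard fact'' is precisely where the paper's $\psi_E$-argument lives (take $E=\phi^{-1}(B_\delta(z))$ of positive finite measure and a constant norming fiber vector, then use continuity), so you should record that one line rather than treat it as free. Your derivation of the $\omega_N$-invariance from the gauge identity $m_g v_N m_g^{-1}=\omega_N v_N$ is also slicker than the paper's, which instead computes $\spec(w_n)=\{\zeta\in S^1\colon\zeta^N=z_n\}$ and takes the closure of the union. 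Two cosmetic points to verify: your $V(x)$ carries the weight in position $(0,1)$ rather than $(N-1,0)$, and your similarity produces $t$ times the inverse of the paper's generator $s_N$; both discrepancies are absorbed by the isometric automorphisms of $F^p(\Z_N)$ induced by rotation (Proposition \ref{prop: shift invariance norm on FpZn}) and by the inversion $k\mapsto -k$ of $\Z_N$, so the resulting tuple norm is indeed the one from Definition \ref{df: norm sigma n}.
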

\begin{proof} The proposition is trivial if $\spec(v_N)$ is empty (which happens if and only if $\mu(X_N)=0$), so assume it is not. \\
\indent We prove the second claim first. Note that $v^N$ is multiplication by the
measurable function
$$g=h\cdot (h\circ T^{-1})\cdots (h\circ T^{-N+1})\colon X_N\to S^1.$$
By Lemma \ref{lma: trivialization of h} and Remark \ref{rem: got rid of h}, we may assume that the range and essential of $g$ agree
with the range and essential of $h|_{X_{n,0}}$, respectively. By Exercise 6, part (b) in \cite{rieffel notes}, we may assume that the range
of $h|_{X_{n,0}}$ is contained in its essential range, and hence that $\mbox{essran}(h|_{X_{n,0}})=\overline{\mbox{ran}(h|_{X_{n,0}})}$.
Let $(z_n)_{n\in\N}$ be a dense infinite sequence in $\mbox{ran}(h|_{X_{n,0}})$, and set
$$w_n=\left( \begin{array}{ccccc}
0 & 1 &  &  & \\
& 0 & \ddots &  &  \\
& & \ddots & \ddots &  \\
&  &  & 0 & 1 \\
z_n&  &  &  & 0 \end{array} \right)\in M_N.$$
Then $w=\bigoplus\limits_{n\in\N}w_n$ is an invertible isometry on $\bigoplus\limits_{n\in\N} \ell^p_N\cong \ell^p$. \\
\ \\
\indent \textbf{Claim:} Let $f$ in $\C[\Z]$. Then $\|f(v)\|=\|f(w)\|$.\\
\indent It is clear that $\|f(w)\|=\sup\limits_{n\in\N}\left\|f(w_n)\right\|.$ Let $\ep>0$. Choose $\dt>0$ such that whenever $a$ and $b$
are elements in a Banach algebra such that $\|a-b\|<\delta$, then $\|f(a)-f(b)\|<\frac{\ep}{2}$.
Choose $n$ in $\N$ and choose $\xi=(\xi_0,\ldots,\xi_{N-1})\in \ell^p_N$ with $\|\xi\|_p=1$ such that
$$\|f(w_n)\xi\|>\|f(w)\|-\frac{\ep}{2}.$$
Since $z_n$ is in the essential range of $h|_{X_{n,0}}$, we can find a measurable set $E$ in $X$ with $\mu(E)>0$ such that
$$|h(x)-z_n|<\delta$$
for all $x$ in $E$. Since $X$ is $\sigma$-finite, we may assume that $\mu(E)<\infty$. We may also assume that the sets
$E,T(E),\ldots,T^{N-1}(E)$ are pairwise disjoint. The linear map $\psi_E\colon \ell^p_N\to L^p(X_N,\mu)$ given by
$$\psi_E(\eta)=\frac{1}{\mu(E)^{\frac{1}{p}}}\sum_{j=0}^{N-1}\eta_j u_T^j(\mathbbm{1}_E)$$
for all $\eta=(\eta_0,\ldots,\eta_{N-1})\in \ell^p_N$, is an isometry by Lemma \ref{lma: psi isometry}. \\
\indent For notational convenience, we write
$$z^{(0)}_n=z_n \ \mbox{ and } \ z^{(1)}_n=\cdots=z^{(N-1)}_n=1,$$
and we take indices modulo $N$. Thus, for $\eta$ in $\ell^p_N$ we have $w_n\eta=\left(\eta_{j-1}z_n^{(j-1)}\right)_{j=0}^{N-1}$. Moreover,
\begin{align*}
\psi_E(w_n\eta)=\frac{1}{\mu(E)^{\frac{1}{p}}}\sum_{j=0}^{N-1} z_n^{(j-1)}\eta_{j-1} u_T^j(\mathbbm{1}_E)
=\frac{1}{\mu(E)^{\frac{1}{p}}}\sum_{j=0}^{N-1} \eta_{j}z_n^{(j)} u_T^{j+1}(\mathbbm{1}_E)
\end{align*}
and
\begin{align*}
v\psi_E(\eta)&=\frac{1}{\mu(E)^{\frac{1}{p}}}\sum_{j=0}^{N-1}\eta_j (m_h\circ u_T^{j+1})(\mathbbm{1}_E)
\end{align*}
for all $\eta\in\ell^p_N$. Thus,
\begin{align*}\left\|\psi_E(w_n\eta)-v\psi_E(\eta)\right\|_p^p&=
\frac{1}{\mu(E)}\sum_{j=0}^{N-1}|\eta_j|^p \left\|z_n^{(j)} u_T^{j+1}(\mathbbm{1}_E)-(m_h\circ u_T^{j+1})(\mathbbm{1}_E)\right\|^p_p \\
&\leq \frac{1}{\mu(E)}\sum_{j=0}^{N-1}|\eta_j|^p \sup_{x\in T^{-j-1}(E)}\left|z_n^{(j)}-h(x)\right|^p\left\|u_T^{j+1}(\mathbbm{1}_E)\right\|^p_p\\
&<\|\eta\|_p^p \ \delta^p,
\end{align*}
for all $\eta\in\ell^p_N$, which shows that $\|\psi_E\circ w_n-v\circ\psi_E\|<\delta$. By the choice of $\delta$, we deduce that
$$\left\|\psi_E\circ f(w_n)-f(v)\circ\psi_E\right\|<\frac{\ep}{2}.$$
Using that $\psi_E$ is an isometry at the third step, we get
$$\|f(v)\|\geq \|f(v)\psi_E(\xi)\|\geq \left\|\psi_E(f(w_n)\xi)\right\|-\frac{\ep}{2}=\left\|f(w_n)\xi\right\|-\frac{\ep}{2}>\|f(w)\|-\ep.$$
Since $\ep>0$ is arbitrary, we conclude that $\|f(v)\|\geq \|f(w)\|$.\\
\ \\
\indent Let us show that $\|f(w)\|\geq \|f(v)\|$. Given $\ep>0$, choose $g$ in $L^p(X_N,\mu)$ with $\|g\|_p=1$ such that
$$\|f(v)g\|_p>\|f(v)\|-\ep.$$
Write $X_N$ as a disjoint union $X_N=X_{N,0}\sqcup \ldots \sqcup X_{N,N-1}$, where each of the sets $X_{N,j}$ is measurable and $T^{-1}(X_{N,j})=X_{N,j+1}$,
where the subscripts are taken mod $N$. Given a measurable subset $Y$ of $X_{N,0}$ with $0<\mu(Y)<\I$, the linear map $\psi_Y\colon
\ell^p_N\to L^p(X_N,\mu)$ defined in Lemma \ref{lma: psi isometry} is isometric because the sets $Y,T(Y),\ldots, T^{N-1}(Y)$ are pairwise
disjoint.\\
\indent Set
$$\ep_0=\min\left\{\ep, \frac{\ep}{2\|f(v)g\|_p}\right\}.$$
Choose $\delta_0>0$ such that whenever $a$ and $b$ are elements in a Banach algebra such that $\|a-b\|<\delta_0$, then $\|f(a)-f(b)\|<\ep_0.$\\
\indent By simultaneously approximating the functions
$$g_{|X_{N,0}},T^{-1}(g_{|X_{N,1}}),\ldots,T^{-N+1}(g_{|X_{N,N-1}}) \ \ \mbox{ and } \ \ h_{|X_{N,0}}$$
as functions on $X_{N,0}$, by step-functions, we can find:
\bi\item A positive integer $K$ in $\N$ and disjoint, measurable sets $Y^{(k)}$ of positive finite measure with $X_{N,0}=\bigsqcup_{k=1}^K Y^{(k)}$;
\item Elements $\eta^{(k)}=(\eta_j^{(k)})_{j=0}^{N-1}$ in $\ell^p_N$;
\item Not necessarily distinct positive integers $n_1,\ldots,n_K$,
\ei
such that
\be\item With $\widetilde{h}=\mathbbm{1}_{X\setminus X_{N,0}}+ \sum\limits_{k=1}^K z_{n_k}\mathbbm{1}_{Y^{(k)}}$, we have
$$\|h-\widetilde{h}\|_\infty <\delta_0$$
\item With $\widetilde{g}=\sum\limits_{k=1}^K \psi_{Y^{(k)}}(\eta^{(k)})$, we have
$$\|g-\widetilde{g}\|_p <\frac{\ep_0}{\|f(v)\|+\ep}.$$
\ee
Set $\widetilde{v}=m_{\widetilde{h}}\circ u_T$. It follows from condition (1) above that
$$\|v-\widetilde{v}\|\leq \|h-\widetilde{h}\|_\infty\|u_T\|<\delta_0,$$
and by the choice of $\delta_0$, we conclude that $\|f(v)-f(\widetilde{v})\|<\ep_0$.\\
\indent For $\xi\in \ell^p_N$ and $k=1,\ldots,K$, we have
\begin{align*} \widetilde{v}\psi_{Y^{(k)}}(\xi) &= (m_{\widetilde{h}}\circ u_T)\left(\frac{1}{\mu(Y^{(k)})^{\frac{1}{p}}}\sum_{j=0}^{N-1}\xi_j \mathbbm{1}_{T^{-j}(Y^{(k)})} \right)\\
 &=\frac{1}{\mu(Y^{(k)})^{\frac{1}{p}}} z_{n_k}\xi_j \mathbbm{1}_{Y^{(k)}}+\sum_{j=1}^{N-2}\xi_{j+1}\mathbbm{1}_{T^{-j}(Y^{(k)})}+\xi_0\mathbbm{1}_{T^{-N+1}(Y^{(k)})}\\
&=\psi_{Y^{(k)}}\left(w_{n_k}\xi\right).
\end{align*}
It follows that
$$f(\widetilde{v})\psi_{Y^{(k)}}(\xi)=\psi_{Y^{(k)}}(f(w_{n_k})\xi)$$
for all $\xi$ in $\ell^p_N$. Thus,
\begin{align*} \|f(v)g-f(\widetilde{v})\widetilde{g}\|_p &\leq \|f(v)g-f(\widetilde{v})g\|_p + \|f(\widetilde{v})g-f(\widetilde{v})\widetilde{g}\|_p\\
 &\leq \|f(v)-f(\widetilde{v})\| \|g\|_p + \|f(\widetilde{v})\| \|g-\widetilde{g}\|_p\\
&\leq \ep_0 + (\|f(v)\|+\ep)  \frac{\ep}{2\|f(v)g\|_p} \leq \frac{\ep}{\|f(v)g\|_p}.
\end{align*}
We therefore conclude that
\begin{align*} \frac{\|f(\widetilde{v})\widetilde{g}\|_p}{\|\widetilde{g}\|_p}\geq
\frac{\|f(v)g\|_p(1+\ep)}{\|g\|_p+\ep}=\frac{\|f(v)g\|_p(1+\ep)}{1+\ep}=\frac{\|f(v)g\|_p}{\|g\|_p}\geq \|f(v)\|-\ep.\end{align*}
We have
\begin{align*} \left\| f(\widetilde{v})\widetilde{g}\right\|_p&=\left\|f(\widetilde{v})\left(\sum_{k=1}^K \psi_{Y^{(k)}}(\eta^{(k)})\right)\right\|_p\\
&=\left\|\sum_{k=1}^K \psi_{Y^{(k)}}\left(f(w_{n_k})\eta^{(k)}\right)\right\|_p\\
&=\left(\sum_{k=1}^K \left\|f(w_{n_k})\eta^{(k)}\right\|^p_p\right)^{\frac{1}{p}},\\
\end{align*}
and also
$$\|\widetilde{g}\|_p=\left\|\sum_{k=1}^K\psi_{Y^{(k)}}(\eta^{(k)})\right\|_p = \left(\sum_{k=1}^K \left\|\eta^{(k)}\right\|^p_p\right)^{\frac{1}{p}}.$$
Set
$$\widetilde{w}=\bigoplus_{k=1}^K w_{n_k}\in \B\left(\bigoplus_{k=1}^K\ell^p_N\right),$$
and $\eta=\left(\eta^{(1)},\ldots,\eta^{(K)}\right)\in \bigoplus\limits_{k=1}^K\ell^p_N$. Then $\widetilde{w}$ is an invertible isometry, and
the computations above show that
$$\|f(\widetilde{w})\eta\|_p=\|f(\widetilde{v})\widetilde{g}\|_p \ \ \mbox{ and } \ \ \|\eta\|_p=\|\widetilde{g}\|_p.$$
We clearly have
$$\|f(\widetilde{w})\|=\max\limits_{k=1,\ldots,K} \|f(w_{n_k}) \|\leq \sup_{n\in\N} \|f(w_n) \|=\|f(w)\|.$$
We conclude that
\begin{align*}
\|f(w)\|  \geq \|f(\widetilde{w})\|\geq \frac{\|f(\widetilde{w})\eta\|_p}{\|\eta\|_p}=\frac{\|f(\widetilde{v})\widetilde{g}\|_p}{\|\widetilde{g}\|_p}
\geq \|f(v)\|-\ep.\end{align*}
Since $\ep>0$ is arbitrary, this shows that $\|f(w)\|\geq \|f(v)\|$, and hence the proof of the claim is complete.\\
\ \\
\indent We will now show that $\spec(v_N)$ is invariant under translation by the $N$-th roots of unity in $S^1$. We retain the notation
of the first part of this proof. Note that since $\spec(v_N)$ is a subset of $S^1$, it can be computed in any unital Banach
algebra that contains $v_N$ by Remark \ref{rem: spectra}; in particular, it can be computed in $F^p(v_N,v_N^{-1})$. Also, the spectrum of $v_N$ in
$F^p(v_N,v_N^{-1})$ is equal to the spectrum of $w$ in $\B(\ell^p)$, since we have shown that $v_N\mapsto w$ extends to an
isometric isomorphism $F^p(v_N,v_N^{-1})\cong F^p(w,w^{-1})$. Moreover, it is clear that
$$\spec(w)=\overline{\bigcup_{n\in\N}\spec(w_n)}.$$
It is a straightforward exercise to show that $\spec(w_n)=\{\zt\in\T\colon \zt^N=z_n\}$, which is clearly invariant under
translation by the $N$-th roots of unity in $S^1$, so the claim follows.\\
\ \\
\indent We have shown in the first part that there is an isometric isomorphism
$$F^p(v_N,v_N^{-1})\cong \left(C(\spec(v_N)),\|\cdot\|_{\spec(v_N),N}\right).$$
Now, the Gelfand transform $\Gamma\colon F^p(v_N,v_N^{-1})\to C(\spec(v_N))$ maps $v$ to the canonical inclusion of $\spec(v_N)$
into $\C$, so the image of $\Gamma$ is isometrically isomorphic to $\left(C(\spec(v_N)),\|\cdot\|_{\spec(v_N),N}\right)$, as desired.\end{proof}

The situation for $v_\I$ is rather different, since the range of the Gelfand transform does not contain all continuous functions
on its spectrum (which is either $S^1$ or empty). Indeed, the Banach algebra that $v_\I$ generates together with its inverse is
isometrically isomorphic to $F^p(\Z)$ (or the zero algebra if $\mu(X_\I)=0$); see Theorem \ref{thm: FpvI} below. \\
\indent One difficulty of working with with $v_\I$ is that the analog of Lemma \ref{lma: cross section} is not in general true,
this is, $v_\I$ need not have an infinite bilateral sub-shift, as the following example shows.

\begin{eg} Let $X=S^1$ with normalized Lebesgue measure $\mu$. Given $\theta$ in $\R\setminus\Q$, consider the invertible transformation
$T_\theta \colon S^1\to S^1$ given by rotation by angle $\theta$. Then $T_\theta$ is measure preserving and every point of $S^1$
has infinite period. We claim that there is no measurable set $E$ with positive measure such that the sets $T_\theta^n(E)$ for
$n$ in $\Z$ are pairwise disjoint. If $E$ is any set such that all its images under $T_\theta$ are disjoint, we use translation invariance of
$\mu$ to get
$$1=\mu(S^1)\geq \mu\left(\bigcup_{n\in\Z}T_\theta^n(E)\right)=\sum_{n\in\Z}\mu(E).$$
It follows that $E$ must have measure zero, and the claim is proved.
\end{eg}

In order to deal with the absence of infinite sub-shifts, we will show that the set transformation $T_\I$ has what we call
``arbitrarily long strings'', which we proceed to define.

\begin{df}\label{df: long string}
Let $(Y,\nu)$ be a measure space and let $S\colon Y\to Y$ be an invertible measure class preserving transformation. Given a measurable
set $E$ in $Y$ with $\mu(E)>0$, the finite sequence $E,S^{-1}(E),\ldots,S^{-n+1}(E)$ is called a \emph{string of length $n$} for $S$ if
the sets $E,S^{-1}(E),\ldots,S^{-n+1}(E)$ are pairwise disjoint. \\
\indent The map $S$ is said to have \emph{arbitrarily long strings} if for all $n$ in $\N$ there exists a string of length $n$.\end{df}

The following lemma is not in general true without some assumptions on the $\sigma$-algebra. What is needed in our proof is that for
every point $x$ in $X$, the intersection of the measurable sets of positive measure that contain $x$ is the singleton $\{x\}$, which is
guaranteed in our case since we are working with the (completed) Borel $\sigma$-algebra on a complete metric space.

\begin{lemma}\label{lma: long string}
Let $(Y,\nu)$ be a $\sigma$-finite measure space such that for every $y$ in $Y$, the intersection of the measurable sets of positive measure
that contain $y$ is the singleton $\{y\}$. Let $S\colon Y\to Y$ be an invertible measure class preserving transformation such that every
point of $Y$ has infinite period. Then $S$ has arbitrarily long strings.\end{lemma}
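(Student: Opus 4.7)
The argument reduces to finding, for each $n\geq 2$, a measurable $E$ with $\nu(E)>0$ such that $E\cap S^{-k}(E)=\emptyset$ for $k=1,\ldots,n-1$; disjointness of the full family $E,S^{-1}(E),\ldots,S^{-(n-1)}(E)$ then follows from the identity $S^{-i}(E)\cap S^{-j}(E)=S^{-i}\bigl(E\cap S^{-(j-i)}(E)\bigr)$.

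The key observation is that for any measurable $B$, the ``residue'' $E_B := B \setminus \bigcup_{k=1}^{n-1} S^{-k}(B)$ automatically satisfies $E_B \cap S^{-k}(E_B)=\emptyset$ for $k=1,\ldots,n-1$: if $y\in E_B$ then $y\in B$ and $S^k(y)\notin B\supseteq E_B$. So the entire problem reduces to exhibiting a measurable $B$ of positive finite measure for which $\nu(E_B)>0$, equivalently $\sum_{k=1}^{n-1}\nu\bigl(B\cap S^{-k}(B)\bigr)<\nu(B)$.

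To produce $B$, I would pick any $y^*\in Y$ and use the aperiodicity of $S$ to note that the $n$ points $y^*, S^{-1}(y^*),\ldots,S^{-(n-1)}(y^*)$ are distinct. The separation hypothesis then provides, for each $k=1,\ldots,n-1$, a measurable $M_k$ with $y^*\in M_k$, $\nu(M_k)>0$, and $S^{-k}(y^*)\notin M_k$. The ideal candidate is $B=\bigcap_k M_k$, which contains $y^*$ and excludes all its iterates; the main obstacle is that this finite intersection may fail to have positive measure, so simple intersection of the $M_k$'s is not enough.

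To circumvent this, I would argue by contradiction: suppose no choice of $B$ works, i.e., $\nu(E_B)=0$ for every measurable $B$ of positive finite measure, so that $B\subseteq\bigcup_{k=1}^{n-1}S^{-k}(B)$ modulo null sets. Applied to $B=\bigcap_k M_k$ (assuming positive measure), almost every $y\in B$ satisfies $S^{-k}(y)\in B\subseteq M_k$ for some $k$, while $y^*\in B$ fails this by the construction of the $M_k$'s. This yields a contradiction when $\{y^*\}$ is an atom, handling the atomic case immediately (in which case one could even take $E=\{y^*\}$ directly). In the non-atomic case, I would iterate the hypothesis to obtain a decreasing sequence $B_j\downarrow\{y^*\}$ of positive-measure sets containing $y^*$ and eventually excluding all of $S^{-1}(y^*),\ldots,S^{-(n-1)}(y^*)$, and then argue via pigeonhole and countable additivity that the ratios $\nu(B_j\cap S^{-k}(B_j))/\nu(B_j)$ must tend to zero for some $k$; controlling these ratios simultaneously is the delicate step, and I expect it to require additional structure, such as the countable generation of the $\sigma$-algebra coming from the standard Borel setting in the paper's intended application.
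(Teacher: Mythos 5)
Your reduction is correct and clean: the residue set $E_B = B\setminus\bigcup_{k=1}^{n-1}S^{-k}(B)$ does automatically satisfy $E_B\cap S^{-k}(E_B)=\emptyset$ for $k=1,\dots,n-1$, so the lemma indeed comes down to producing one measurable $B$ of positive finite measure with $\nu(E_B)>0$, and the sets $M_k$ supplied by the separation hypothesis are the right raw material. But the proof stops exactly where the real work begins: you never actually produce such a $B$. The candidate $\bigcap_k M_k$ may be null, the contradiction argument is only sketched, and you say yourself that controlling the ratios $\nu(B_j\cap S^{-k}(B_j))/\nu(B_j)$ is ``the delicate step'' which you ``expect to require additional structure.'' As written this is a statement of the difficulty rather than a resolution of it, so the argument is incomplete. (A side remark: passing from the a priori uncountable intersection in the hypothesis to a decreasing sequence $B_j\downarrow\{y^*\}$ already uses countable generation of the $\sigma$-algebra; the paper makes the same move, and it is harmless in the intended standard Borel application, but it is an extra assumption worth flagging.)

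The paper closes the gap you left open not by any quantitative estimate on overlaps but by an induction on the string length combined with a dichotomy. Fix a decreasing sequence $(E_m)$ of positive-measure sets with $\bigcap_m E_m=\{x\}$. Having built a decreasing family $(F^{(n-1)}_m)$ of positive-measure sets containing $x$, still shrinking to $\{x\}$, whose first $n-1$ backward translates are pairwise disjoint, one asks whether some member differs from its $S^{-n}$-translate on a set of positive measure. If yes, one intersects that member with the complement of its translate and obtains a positive-measure set with one more disjoint translate, then cuts it down by the $E_m$ to restore the shrinking family. If no, every member is essentially $S^{-n}$-invariant, and intersecting over $m$ forces $S^{-n}(x)=x$, contradicting aperiodicity. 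This dichotomy is precisely the mechanism that substitutes for the simultaneous control of the overlap ratios you could not obtain; without it (or Rokhlin-type machinery, which is not available here since $S$ is only measure class preserving and need not preserve any finite invariant measure), your argument does not go through.
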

\begin{proof} Let $(E_m)_{m\in\N}$ be a decreasing sequence of measurable sets with $\mu(E_m)>0$ for all $m$ in $\N$ and such that
$\bigcap\limits_{m\in\N}E_m=\{x\}$. Without loss of generality, we may assume that $T(x)$ does not belong to $E_m$ for all $m$ in $\N$.\\
\indent Let $n$ in $\N$. We claim that there exist $m_n$ in $\N$ and a sequence $(F^{(n)}_m)_{m\geq m_n}$ of measurable sets such that
\be\item For all $m\geq m_n$, the set $F^{(n)}_m$ is contained in $E_m$ and $x\in F^{(n)}_m$;
\item $\mu(F^{(n)}_m)>0$ for all $m\geq m_n$;
\item The sets $F^{(n)}_m,T^{-1}(F^{(n)}_m),\ldots,T^{-n}(F^{(n)}_m)$ are pairwise disjoint (up to null-sets).\ee
We proceed by induction on $n$.\\
\indent Set $n=1$. If there exists $m_1$ such that $\mu(E_{m_1}\triangle T^{-1}(E_{m_1}))>0$, take $F^{(1)}_{m_1}=E_{m_1}\cap T^{-1}(E_{m_1})^c$
and $F^{(1)}_m=F^{(1)}_{m_0}\cap E_m$ for all $m\geq m_1$. It is easy to verify that the sequence $(F^{(1)}_m)_{m\geq m_1}$ satisfies
the desired properties. If no such $m_1$ exists, it follows that $\mu(E_{m}\triangle T^{-1}(E_{m}))=0$ for all $m$ in $\N$. Upon getting
rid of null-sets, we may assume that $E_m=T^{-1}(E_m)$ for all $m$ in $\N$. We have
\[\{x\}=\bigcap_{m\in\N}E_m=\bigcap_{m\in\N}T^{-1}(E_m)=T^{-1}\left(\bigcap_{m\in\N}E_m\right)=\{T^{-1}(x)\},\]
which implies that $x$ is a fixed point for $T$. This is a contradiction, and the case $n=1$ is proved.\\
\indent Let $n\geq 2$, and let $m_{n-1}$ and $(F^{(n-1)}_m)_{m\geq m_{n-1}}$ be as in the inductive hypothesis for $n-1$. Suppose that there exists
$m_n$ such that the sets
$$F^{(n-1)}_{m_n},T^{-1}(F^{(n-1)}_{m_n}),\ldots, T^{-n}(F^{(n-1)}_{m_n})$$
are not disjoint up to null-sets. Let $j\in \{0,\ldots,n-1\}$ such that
$$\mu(T^{-j}(F^{(n-1)}_{m_n})\triangle T^{-n}(F^{(n-1)}_{m_n}))>0.$$
Since $T$ preserves null-sets, it follows that $\mu(T^{-(n-j)}(F^{(n-1)}_{m_n})\triangle F^{(n-1)}_{m_n})>0$. By assumption, the first
$n-1$ translates of
$F^{(n-1)}_{m_n}$ are pairwise disjoint, so we must have $j=n$ and hence $\mu(T^{-n}(F^{(n-1)}_{m_n})\triangle F^{(n-1)}_{m_n})>0$. Using an argument
similar to the one used in the case $n=1$, one shows that the sequence given by
$$F^{(n)}_{m_n}=F^{(n-1)}_{m_n}\cap T^{-1}(F^{(n-1)}_{m_n})^c \ \ \mbox{ and } \ \ F^{(n)}_m=F^{(n)}_{m_n}\cap F^{(n-1)}_m$$
for all $m\geq m_n$, satisfies the desired properties.\\
\indent If no such $m_n$ exists, it follows that $\mu(F^{(n-1)}_m\cap T^{-n}(F^{(n-1)}_m))=0$ for all $m\geq m_{n-1}$. Again, upon getting
rid of null-sets, we may assume that $F^{(n-1)}_m=T^{-n}(F^{(n-1)}_m)$ for all $m\geq m_{n-1}$. We have
\[\{x\}=\bigcap_{m\in\N}F^{(n-1)}_m=\bigcap_{m\in\N}T^{-n}(F^{(n-1)}_m)=T^{-n}\left(\bigcap_{m\in\N}F^{(n-1)}_m\right)=\{T^{-n}(x)\},\]
and thus $T^{-n}(x)=x$. This is again a contradiction, which shows that such $m_n$ must exist. This proves the claim, and the proof is finished.
\end{proof}

To see that Lemma \ref{lma: long string} is not true in full generality, consider $Y=\Z$ endowed
with the $\sigma$-algebra $\{\emptyset,\Z\}$ and measure $\mu(\Z)=1$. Let $S\colon \Z\to\Z$ be the bilateral shift.
Then every point of $Y$ has infinite period, but there are no strings of any positive length, let alone of arbitrarily long length.

\begin{cor} The measure class preserving transformation $T\colon X\to X$ has arbitrarily long strings if and only if either
$\mu(X_\I)>0$ or $\mu(X_n)>0$ for infinitely many values of $n$ in $\N$.\end{cor}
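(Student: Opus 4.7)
The plan is to prove both directions of the biconditional, with the forward direction (``arbitrarily long strings'' implies the measure condition) handled by contrapositive.

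For the easier direction, I would split into two cases according to which condition holds. If $\mu(X_\I)>0$, the restriction $T|_{X_\I}$ is an invertible measure class preserving transformation on a measurable subset of a standard Borel space, and every point of $X_\I$ has infinite period by construction. One expects the hypothesis of Lemma \ref{lma: long string} to be satisfied on (the support of) $X_\I$, using that standard Borel spaces are second countable and singletons can be approached by a countable decreasing base of Borel neighborhoods; applying the lemma then produces arbitrarily long strings for $T|_{X_\I}$, and these are also strings for $T$. In the second case, if $\mu(X_n)>0$ for infinitely many $n\in\N$, then given $N\in\N$ one picks some $n\geq N$ with $\mu(X_n)>0$ and invokes Lemma \ref{lma: cross section} to decompose $X_n=X_{n,0}\sqcup\cdots\sqcup X_{n,n-1}$ with $T^{-1}(X_{n,j})=X_{n,j+1}$. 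Since $T$ preserves null sets, every $X_{n,j}$ has positive measure, and setting $E=X_{n,0}$ yields a string $E,T^{-1}(E),\ldots,T^{-(n-1)}(E)$ of length $n\geq N$.

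For the contrapositive of the other direction, assume $\mu(X_\I)=0$ and $\mu(X_n)=0$ for all but finitely many $n\in\N$. Let $N_0$ be the largest integer with $\mu(X_{N_0})>0$, so that $\mu\bigl(X\setminus\bigsqcup_{k=1}^{N_0}X_k\bigr)=0$. Given any string $E,T^{-1}(E),\ldots,T^{-(n-1)}(E)$ with $\mu(E)>0$, the decomposition $E=\bigsqcup_{k=1}^{N_0}(E\cap X_k)$ (up to null sets) forces $\mu(E\cap X_k)>0$ for some $k\leq N_0$. Since $T^k$ restricts to the identity on $X_k$, one has $T^{-k}(E)\supseteq T^{-k}(E\cap X_k)=E\cap X_k$, so $T^{-k}(E)\cap E$ has positive measure. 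The disjointness of the string then requires $n\leq k\leq N_0$, showing strings have length bounded by $N_0$ and contradicting the existence of arbitrarily long strings.

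The main obstacle I anticipate is the first case of the forward direction, specifically the verification that Lemma \ref{lma: long string} applies to $T|_{X_\I}$. The lemma requires that every point of the measure space has the property that the intersection of all measurable sets of positive measure containing it is that singleton. This is not automatic for arbitrary points of a standard Borel space, but it does hold on the support of $\mu|_{X_\I}$ (using a countable neighborhood base from a compatible Polish topology). One needs to either argue that we may restrict to the support without loss of generality, or slightly adapt the proof of Lemma \ref{lma: long string}. The remaining steps are essentially bookkeeping with the partition $X=\bigsqcup_{n\in\NI}X_n$ and the partitions supplied by Lemma \ref{lma: cross section}.
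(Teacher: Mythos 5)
Your proof is correct and follows exactly the route the paper intends (the corollary is stated without proof, but it is designed to follow from Lemma \ref{lma: cross section} for the finite-period blocks, Lemma \ref{lma: long string} applied to $X_\I$, and the observation that $T^k$ fixes $X_k$ pointwise for the converse). Your flagged worry about the hypothesis of Lemma \ref{lma: long string} on $X_\I$ is legitimate --- the ``every point'' condition really only holds on the support of $\mu|_{X_\I}$ --- and your fix of discarding the null complement of the support is the right one; the paper itself glosses over this point in the corollary following Theorem \ref{thm: FpvI}.
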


We are now ready to prove that $F^p(v_\I,v_\I^{-1})$ is isometrically isomorphic to $F^p(\Z)$. We prove the result in greater
generality because the proof is essentially the same and the extra flexibility will be needed later.

\begin{thm}\label{thm: FpvI}
Let $p\in [1,\I)$, let $(Y,\nu)$ be a $\sigma$-finite measure space with $\nu(Y)>0$,
and let $S\colon Y\to Y$ be an invertible measure class
preserving transformation with arbitrarily long strings. Let $h\colon Y\to S^1$ be a measurable function and let
$$w=m_h\circ u_S \colon L^p(Y,\nu)\to L^p(Y,\nu)$$
be the resulting invertible isometry. Then $\spec(w)=S^1$ and there is a canonical isometric isomorphism
$$F^p(w,w^{-1})\cong F^p(\Z)$$
determined by sending $w$ to the canonical generator of $F^p(\Z)$.
\end{thm}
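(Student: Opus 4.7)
My approach is to show that the canonical contractive unital homomorphism $\pi\colon F^p(\Z)\to F^p(w,w^{-1})$ sending the canonical generator of $F^p(\Z)$ to $w$ is isometric. This homomorphism exists by the universal property of $F^p(\Z)$: since $w$ and $w^{-1}$ are both isometries, the map $\delta_n\mapsto w^n$ defines a contractive representation of $\ell^1(\Z)$ on $L^p(Y,\nu)$. Thus $\pi$ is a well-defined contractive surjection, and it remains to prove the reverse inequality $\|f(w)\|\geq \|\lambda_p(f)\|_{F^p(\Z)}$ for every Laurent polynomial $f=\sum_{m=-M}^{M}a_m z^m$.

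Fix such an $f$ and $\varepsilon>0$, and pick $\xi\in\ell^p(\Z)$ of finite support in $\{-K,\ldots,K\}$ with $\|\xi\|_p=1$ and $\|\lambda_p(f)\xi\|_p>\|\lambda_p(f)\|-\varepsilon$. Using that $S$ has arbitrarily long strings, choose a measurable $E\subseteq Y$ with $\nu(E)>0$ such that $E,S^{-1}(E),\ldots,S^{-2N}(E)$ are pairwise disjoint for some $N\geq K+M$, and set $F:=S^{-N}(E)$, so that the two-sided translates $B_k:=S^k(F)$ for $|k|\leq N$ are pairwise disjoint. By $\sigma$-finiteness, shrinking $F$ to a subset of positive intersection with some set of finite measure (disjointness of the $B_k$ is inherited by subsets), I may further assume $0<\nu(F)<\infty$. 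The key construction is to set
\[
\phi_k:=\nu(F)^{-1/p}\,w^k\mathbbm{1}_F\in L^p(Y,\nu)
\]
for each $k\in\Z$. Since $w$ is an isometry, $\|\phi_k\|_p=1$; since $w=m_h\circ u_S$, the function $\phi_k$ is supported on $B_k$; and by construction $w\phi_k=\phi_{k+1}$ for all $k\in\Z$. The virtue of this choice is that all of the phase information from $h$ and all of the Radon-Nikodym data from $S$ are absorbed into the $\phi_k$, so no separate trivialization of $h$ as in Lemma \ref{lma: trivialization of h} and no change of measure as in Lemma \ref{lma: equivalent measure} is needed.

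Set $\eta:=\sum_{k=-K}^{K}\xi_k\phi_k$. Since the $\phi_k$ with $|k|\leq N$ have pairwise disjoint supports, $\|\eta\|_p=\|\xi\|_p=1$, and reindexing via $j=k+m$ (valid because $|k+m|\leq K+M\leq N$) yields
\[
f(w)\eta=\sum_{m=-M}^{M}\sum_{k=-K}^{K}a_m\xi_k\phi_{k+m}=\sum_{|j|\leq K+M}(\lambda_p(f)\xi)(j)\,\phi_j.
\]
Disjointness once more gives $\|f(w)\eta\|_p=\|\lambda_p(f)\xi\|_p>\|\lambda_p(f)\|-\varepsilon$, so $\|f(w)\|\geq\|\lambda_p(f)\|-\varepsilon$; letting $\varepsilon\to 0$ completes the isometric bound and hence $\pi$ is an isometric isomorphism. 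The spectrum claim $\spec(w)=S^1$ then follows by identifying $w$ with the canonical generator $\iota$ of $F^p(\sigma)\cong F^p(\Z)$ for the spectral configuration $\sigma$ with $\sigma_\infty=S^1$ via Theorem \ref{thm: Fpsigma}(3), whose spectrum is $\overline{\sigma}=S^1$. The main obstacle is conceptual rather than technical: one must recognize that the construction $\phi_k=w^k\mathbbm{1}_F/\nu(F)^{1/p}$ simultaneously sidesteps both the multiplier-trivialization and the measure-preservation issues that appear in Theorem \ref{thm: Fpvn}, a simplification available precisely because the orbit of $F$ is now a genuine bilateral sequence of disjoint sets rather than a finite cycle.
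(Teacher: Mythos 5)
Your proposal is correct and follows essentially the same route as the paper: both proofs use the universal property of $F^p(\Z)$ for the easy inequality and then use an arbitrarily long string to transport a finitely supported $\xi\in\ell^p(\Z)$ into $L^p(Y,\nu)$ via the disjointly supported vectors $w^k(\mathbbm{1}_F)$, on which $f(w)$ acts exactly as $\lambda_p(f)$ does. Your normalization of the $\phi_k$ and the explicit recentering $F=S^{-N}(E)$ are cosmetic refinements of the paper's argument, and the derivation of $\spec(w)=S^1$ from the isometric isomorphism with $F^p(\Z)$ matches the paper's conclusion.
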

\begin{proof} We prove the second claim first. Denote by $\lambda_p\colon \C[\Z]\to \B(\ell^p)$ the left regular representation of $\Z$.
It is enough to show that for every $f$ in $\C[\Z]$, one has
$$\|f(w)\|_{\B(L^p(Y,\nu))}=\|\lambda_p(f)\|_{F^p(\Z)}.$$
Recall that the norm on $F^p(\Z)$ is universal with respect to representations of $\Z$ of $L^p$-spaces. Since $w$ is an invertible
isometry, it induces a representation of $\Z$ on $L^p(Y,\nu)$, and universality of the norm $\|\cdot\|_{F^p(\Z)}$ implies that
$\|\lambda_p(f)\|\geq \|f(w)\|$.\\
\indent We proceed to show the opposite inequality. Let $\ep>0$ and choose an element $\xi=(\xi_k)_{k\in\Z}$ in $\ell^p$ of finite
support with $\|\xi\|_p^p=1$, and such that
$$\|\lambda_p(f)\xi\|_p>\|\lambda_p(f)\|-\ep.$$
Choose $K$ in $\N$ such that $\xi_k=0$ whenever $|k|>K$. Find a positive integer $N$ in $\N$ and complex coefficients $a_n$ with
$-N\leq n\leq N$ such that $f(x,x^{-1})=\sum\limits_{n=-N}^Na_nx^n$.
By assumption, there exists a measurable subset $E\subseteq Y$ with
$\nu(E)>0$ such that the sets $S^{-N-K}(E),\ldots,S^{N+K}(E)$ are pairwise disjoint. Since $Y$ is $\sigma$-finite, we may assume
that $\nu(E)<\infty$.\\
\indent Define a function $g\colon Y\to \C$ by
$$g=\sum_{k=-K}^K \xi_k w^k(\mathbbm{1}_E).$$
Clearly $g$ is measurable. Using that the translates of $E$ are pairwise disjoint at the first step, we compute
$$\|g\|^p_p=\sum_{k=-K}^K |\xi_k|^p \|w^k(\mathbbm{1}_E)\|_p^p=\nu(E)\|\xi\|^p_p=\nu(E)<\infty,$$
so $g\in L^p(Y,\nu)$ and $\|g\|_p=\nu(E)^{\frac{1}{p}}$.\\
\indent We have
\begin{align*} f(w)g &= \sum_{n=-N}^N a_nw^n(g)\\
&=\sum_{n=-N}^N a_nw^n\left(\sum_{k=-K}^K\xi_kw^k(\mathbbm{1}_E)\right)\\
&=\sum_{n=-N}^N\sum_{k=-K}^K a_n\xi_kw^{n+k}(\mathbbm{1}_E)\\
&=\sum_{j=-N-K}^{N+K}\left[\lambda_p(f)\xi\right]_j w^{j}(\mathbbm{1}_E).
\end{align*}
We use again that the translates of $E$ are pairwise disjoint at the first step to get
\begin{align*} \|f(w)g\|^p_p &= \sum_{j=-N-K}^{N+K}\left|\left[\lambda_p(f)\xi\right]_j\right|^p \left\|w^{j}(\mathbbm{1}_E)\right\|
=\left\|\lambda_p(f)\xi\right\|^p_p\nu(E).
\end{align*}
We conclude that
\begin{align*} \|f(w)g\|_{p} &= \left\|\lambda_p(f)\xi\right\|_p\nu(E)^{\frac{1}{p}}\\
&= \left\|\lambda_p(f)\xi\right\|_p\|g\|_p\\
&>\left(\left\|\lambda_p(f)\right\|-\ep\right)\|g\|_p.
\end{align*}

The estimate above shows that $\|f(w)\|_{L^p(Y,\nu)}> \|\lambda_p(f)\|_{F^p(\Z)}-\ep$, and since $\ep>0$ is arbitrary, we conclude
that $\|f(w)\|\geq \|\lambda_p(f)\|_{F^p(\Z)}$, as desired. \\
\indent We now claim that $\spec(w)=S^1$. We first observe that since $\spec(w)$ is a subset of $S^1$, it can be computed in any
unital algebra that contains $w$ by Remark \ref{rem: spectra}. In particular, we can compute the spectrum in $F^p(w,w^{-1})$. \\
\indent We have shown that there is a canonical isometric isomorphism $\varphi\colon F^p(w,w^{-1})\to F^p(\Z)$ that maps $w$ to the
bilateral shift $s$ in $\B(\ell^p)$. We deduce that
$$\spec_{F^p(w,w^{-1})}(w)=\spec_{F^p(\Z)}(\varphi(w))=\spec(s)=S^1,$$
and the proof is complete.
\end{proof}

\begin{cor} Assume that $\mu(X_\I)>0$. Then $\spec(v_\I)=S^1$ and there is a canonical isometric isomorphism
$F^p(v_\I,v_\I^{-1})\cong F^p(\Z)$ determined by sending $v_\I$ to the canonical generator of $F^p(\Z)$.\end{cor}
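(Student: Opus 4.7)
The plan is to obtain this corollary as a direct specialization of Theorem \ref{thm: FpvI} to the data $(Y,\nu,S,h) = (X_\I, \mu|_{X_\I}, T_\I, h_\I)$, so that $v_\I = m_{h_\I}\circ u_{T_\I}$ plays the role of the invertible isometry $w$. Since $\mu$ is $\sigma$-finite on $X$, its restriction $\nu = \mu|_{X_\I}$ is $\sigma$-finite on $X_\I$, and the hypothesis $\mu(X_\I)>0$ gives $\nu(X_\I)>0$. Because $X_\I$ is $T$-invariant and $T^{-1}$-invariant, the restriction $T_\I$ is an invertible measure class preserving transformation of $(X_\I,\nu)$, and $h_\I\colon X_\I\to S^1$ is measurable.

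The only nontrivial point is verifying that $T_\I$ has arbitrarily long strings. I would deduce this from Lemma \ref{lma: long string} applied to $(X_\I,\nu, T_\I)$. Every point of $X_\I$ has infinite period under $T_\I$ by definition of $X_\I$. What remains is the measure-theoretic hypothesis that for every $x\in X_\I$, the intersection of all measurable sets of positive $\nu$-measure containing $x$ equals $\{x\}$. I expect this to be the main obstacle, and I would handle it by exploiting that $X$, being a standard Borel space, carries a Polish topology with a countable neighborhood basis at each point; this topology restricts to $X_\I$ and gives, at any $x\in X_\I$, a decreasing sequence $(U_n)$ of Borel neighborhoods with $\bigcap_n U_n=\{x\}$. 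The delicate case is when some $U_n$ has $\nu$-measure zero: then one replaces $U_n$ with $U_n\cup F$, where $F$ is a Borel set of positive measure disjoint from a small neighborhood of some specified $y\neq x$ that we wish to exclude from the intersection. Performing this modification countably often and separating $x$ from each other point using the Hausdorff property yields a family of positive-measure Borel sets containing $x$ whose intersection is $\{x\}$, which is exactly the hypothesis of Lemma \ref{lma: long string}.

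With arbitrarily long strings in hand, Theorem \ref{thm: FpvI} applies directly to $v_\I = m_{h_\I}\circ u_{T_\I}$ on $L^p(X_\I,\nu)$ and yields both $\spec(v_\I) = S^1$ and a canonical isometric isomorphism $F^p(v_\I,v_\I^{-1})\cong F^p(\Z)$ sending $v_\I$ to the canonical generator of $F^p(\Z)$. No further work is needed.
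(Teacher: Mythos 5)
Your proposal follows the paper's proof exactly: the paper also obtains the corollary by applying Lemma \ref{lma: long string} to the restriction of $T$ to $X_\I$ (justifying the separation hypothesis only by noting that $\mathcal{A}$ contains the Borel $\sigma$-algebra of the Polish space $X$) and then invoking Theorem \ref{thm: FpvI}. Your extra verification of that hypothesis is more detailed than what the paper records, but the route is the same.
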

\begin{proof} Since $\mathcal{A}$ contains the Borel $\sigma$-algebra on the metric space $X$,
Lemma \ref{lma: long string} applies and the result follows from Theorem \ref{thm: FpvI}.
\end{proof}

It is an immediate consequence of Theorem \ref{thm: Fpvn} and Theorem \ref{thm: FpvI} that the sequence $(\spec(v_n))_{n\in\NI}$ is a
spectral configuration, in the sense of Definition \ref{df: spectral conf}. We have been working with complete $\sigma$-finite standard Borel spaces in
order to use Theorem \ref{thm: Lamperti}, as well as to prove Lemma \ref{lma: long string}. Our next lemma is the first step
towards showing that one can get around this assumption in the general (separable) case.

\begin{lemma} \label{lma: well defined}
Let $p\in [1,\I)$, let $(X,\mathcal{A},\mu)$ and $(Y,\mathcal{B},\nu)$ be two complete $\sigma$-finite standard Borel spaces such that $L^p(X,\mu)$ and $L^p(Y,\nu)$
are isometrically isomorphic, and let $\varphi\colon L^p(X,\mu)\to L^p(Y,\nu)$ be any isometric isomorphism.
Let $v\colon L^p(X,\mu)\to L^p(X,\mu)$ be an invertible isometry and set $w=\varphi^{-1}\circ v \circ\varphi$, which is an
invertible isometry of $L^p(Y,\nu)$. If $X_n$ and $Y_n$, for $n$ in $\NI$, are defined as in the beginning of this section with respect to $v$ and $w$, respectively, then
$\varphi$ restricts to an isometric isomorphism $L^p(X_n,\mu|_{X_n})\to L^p(Y_n,\nu|_{Y_n})$.\end{lemma}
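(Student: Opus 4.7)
The plan is to reduce the statement to the uniqueness part of Lamperti's theorem (Theorem~\ref{thm: Lamperti}) applied to $\varphi$ itself. First, since $(X,\mathcal{A},\mu)$ and $(Y,\mathcal{B},\nu)$ are complete $\sigma$-finite standard Borel spaces and $p\neq 2$, Theorem~\ref{thm: Lamperti} applies to $\varphi\colon L^p(X,\mu)\to L^p(Y,\nu)$ and produces a measurable function $k\colon Y\to S^1$ and an invertible measure class preserving transformation $S\colon X\to Y$ with $\varphi=m_k\circ u_S$. Writing $v=m_h\circ u_T$ as in the setup, I would next compute an explicit Lamperti decomposition of $w$.

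The computation uses the two elementary identities $u_R\circ m_a=m_{a\circ R^{-1}}\circ u_R$ (a direct substitution) and $u_A\circ u_B=u_{A\circ B}$ (which follows from the chain rule for Radon--Nikodym derivatives, exactly as in the proof that $u_{T^{-1}}$ inverts $u_T$). Pushing all the multiplication operators through the $u$'s in $\varphi\circ v\circ \varphi^{-1}=m_k\circ u_S\circ m_h\circ u_T\circ u_{S^{-1}}\circ m_{\overline{k}}$, one obtains
\[
w=m_{h'}\circ u_{T'},\qquad \text{where } T'=S\circ T\circ S^{-1}\colon Y\to Y,
\]
and $h'=k\cdot (h\circ S^{-1})\cdot (\overline{k}\circ S\circ T^{-1}\circ S^{-1})\colon Y\to S^1$. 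By the uniqueness clause of Theorem~\ref{thm: Lamperti}, this $T'$ agrees up to null sets with the measure class preserving transformation in the Lamperti decomposition of $w$ that defines the sets $Y_n$. Since $T'=S\circ T\circ S^{-1}$, the $T'$-period of $y\in Y$ equals the $T$-period of $S^{-1}(y)$, so $Y_n=S(X_n)$ up to a $\nu$-null set for every $n\in\overline{\N}$.

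Finally, I would check that $\varphi$ carries $L^p(X_n,\mu|_{X_n})$ onto $L^p(Y_n,\nu|_{Y_n})$, viewed as the natural closed subspaces of $L^p(X,\mu)$ and $L^p(Y,\nu)$ consisting of functions vanishing outside $X_n$ and $Y_n$ respectively. If $f\in L^p(X,\mu)$ vanishes outside $X_n$, then $u_S(f)(y)$ is a scalar multiple of $f(S^{-1}(y))$, hence vanishes outside $S(X_n)=Y_n$; multiplying by $m_k$ does not enlarge the support, so $\varphi(f)\in L^p(Y_n,\nu|_{Y_n})$. The symmetric argument applied to $\varphi^{-1}=u_{S^{-1}}\circ m_{\overline{k}}$, together with the identity $S^{-1}(Y_n)=X_n$ (mod null sets), gives the reverse inclusion. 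Because $\varphi$ is globally isometric, the restriction is an isometric isomorphism, as claimed. The only genuine subtlety is bookkeeping the ``up to null sets'' ambiguity in Lamperti's uniqueness, but since we are working with $L^p$-spaces, null-set differences in $X_n$ or $Y_n$ produce the same $L^p$-subspace, so no harm is done.
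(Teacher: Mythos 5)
Your proof is correct and follows essentially the same route as the paper's: apply Lamperti's theorem to $\varphi$ itself to write $\varphi=m_k\circ u_S$, observe that conjugation transports the underlying transformation to $S\circ T\circ S^{-1}$ (the paper leaves this computation as an exercise, which you carry out), and conclude that $S$ matches the period sets, so $\varphi$ restricts as claimed. The only difference is that you make the intermediate identities and the null-set bookkeeping explicit, which the paper suppresses.
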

\begin{proof} Write $v=m_{h_v}\circ u_{T_v}$ and $w=m_{h_w}\circ u_{T_w}$ as in Theorem \ref{thm: Lamperti}. For $n$ in $\NI$, the
set $X_n$ is the set of points of $X$ of period $n$, and similarly with $Y_n$. Recall that there are canonical isometric isomorphisms
$$L^p(X,\mu)\cong \bigoplus_{n\in\NI}L^p(X_n,\mu|_{X_n}) \ \ \mbox{ and } \ \ L^p(Y,\nu)\cong \bigoplus_{n\in\NI}L^p(Y_n,\nu|_{Y_n}).$$
\indent Since $\varphi$ is an isometric isomorphism, Lamperti's theorem also applies to it, so there are a measurable function $g\colon
Y\to S^1$ and an invertible measure class preserving transformation $S\colon X\to Y$ such that $\varphi=m_g\circ u_S$. It is therefore
enough to show that $S(X_n)=Y_n$ for all $n$ in $\NI$.\\
\indent It is an easy exercise to check that the identity $w=\varphi^{-1}\circ v \circ\varphi$ implies $T_w=S^{-1}\circ T_v\circ S$. The
period of a point $x$ in $X$ (with respect to $T_v$) equals the period of $S(x)$ (with respect to $T_w$), and the result follows.\end{proof}

Recall that if $(X,\mu)$ is a measure space for which $L^p(X,\mu)$ is separable, then there is a complete $\sigma$-finite standard Borel space $(Y,\nu)$ for
which $L^p(X,\mu)$ and $L^p(Y,\nu)$ are isometrically isomorphic.

\begin{df} \label{df: sigma v}
Let $p\in [1,\I)$, let $(X,\mathcal{A},\mu)$ be a measure space for which $L^p(X,\mu)$ is separable, and let $v\colon L^p(X,\mu)\to L^p(X,\mu)$
be an invertible isometry. Let $(Y,\mathcal{B},\nu)$ be a complete $\sigma$-finite standard Borel space and let $\psi\colon L^p(X,\mu)\to L^p(Y,\nu)$ be an isometric
isomorphism. Set $w=\psi^{-1}\circ v \circ\psi$, which is an invertible isometry of $L^p(Y,\nu)$. Let $\{Y_n\}_{n\in\NI}$ be the partition
of $Y$ into measurable subsets as described in the beginning of this section, and note that $w$ restricts to an invertible isometry $w_n$
of $L^p(Y_n,\nu|_{Y_n})$ for all $n$ in $\NI$. By the comments before \ref{lma: well defined}, the sequence $(\spec(w_n))_{n\in\NI}$ is
a spectral configuration. \\
\indent We define the \emph{spectral configuration} assosited with $v$, denoted $\sigma(v)$, by
\[\sigma(v)=(\spec(w_n))_{n\in\NI}.\]\end{df}

We must argue why $\sigma(v)$, as defined above, is independent of the choice of the complete $\sigma$-finite standard Borel space $(Y,\nu)$ and the isometric
isomorphism, but this follows immediately from Lemma \ref{lma: well defined}.\\
\ \\
\indent The following is the main result of this section, and it asserts that every $L^p$-operator algebra generated by an invertible
isometry together with its inverse is as in Proposition \ref{thm: Fpsigma}. The proof will follow rather easily from the results we have
already obtained.

\begin{thm} \label{thm: description}
Let $p\in [1,\I)$, let $(X,\mu)$ be a measure space for which $L^p(X,\mu)$ is separable, and let $v\colon L^p(X,\mu)\to L^p(X,\mu)$
be an invertible isometry. Let $\sigma(v)$ be the spectral configuration associated to $v$ as in Definition \ref{df: sigma v}.
Then the Gelfand transform defines an isometric isomorphism
$$\Gamma\colon F^p(v,v^{-1})\to F^p(\sigma(v)).$$
In particular, $F^p(v,v^{-1})$ can be represented on $\ell^p$. \end{thm}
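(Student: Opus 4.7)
The plan is to reduce to the case where the measure space is a complete $\sigma$-finite standard Borel space, decompose the ambient $L^p$-space according to the $T$-periodic strata, and invoke Theorems~\ref{thm: Fpvn} and~\ref{thm: FpvI} on each piece. Since $L^p(X,\mu)$ is separable, I pick a complete $\sigma$-finite standard Borel space $(Y,\nu)$ and an isometric isomorphism $\psi\colon L^p(X,\mu)\to L^p(Y,\nu)$. Setting $w=\psi\circ v\circ\psi^{-1}$, conjugation by $\psi$ induces an isometric isomorphism $F^p(v,v^{-1})\cong F^p(w,w^{-1})$, and Lemma~\ref{lma: well defined} guarantees that the associated spectral configuration is unchanged. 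Hence we may assume $(X,\mathcal{A},\mu)$ itself is a complete $\sigma$-finite standard Borel space. Then Theorem~\ref{thm: Lamperti} lets us write $v=m_h\circ u_T$, and partitioning $X=\bigsqcup_{n\in\NI}X_n$ into the $T$-periodic strata yields an isometric decomposition $L^p(X,\mu)\cong \bigoplus_{n\in\NI}L^p(X_n,\mu|_{X_n})$ under which $v=\bigoplus_{n\in\NI}v_n$.

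Because this decomposition is isometric, for any Laurent polynomial $f\in\C[x,x^{-1}]$ I would compute
\[
\|f(v,v^{-1})\| = \sup_{n\in\NI}\|f(v_n,v_n^{-1})\|.
\]
For each finite $n$, Theorem~\ref{thm: Fpvn} identifies $F^p(v_n,v_n^{-1})$ isometrically with $(C(\spec(v_n)),\|\cdot\|_{\spec(v_n),n,p})$, so that $\|f(v_n,v_n^{-1})\|=\|f|_{\spec(v_n)}\|_{\spec(v_n),n,p}$. For $n=\I$, the Corollary following Theorem~\ref{thm: FpvI} provides an isometric isomorphism $F^p(v_\I,v_\I^{-1})\cong F^p(\Z)$ sending $v_\I$ to the canonical generator (when $\mu(X_\I)>0$; the contribution is trivial otherwise). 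By the very definition of $\sigma(v)$, the spectra $\spec(v_n)$ are precisely the components of $\sigma(v)$, and comparison with Definition~\ref{df: Fpsigma} gives
\[
\|f(v,v^{-1})\| = \sup_{n\in\NI}\|f|_{\sigma(v)_n}\|_{\sigma(v)_n,n,p} = \|f\|_{\sigma(v),p}.
\]
Since Laurent polynomials are dense in $F^p(v,v^{-1})$ by construction and dense in $F^p(\sigma(v))$ by Theorem~\ref{thm: Fpsigma}(2), this extends to an isometric isomorphism $F^p(v,v^{-1})\to F^p(\sigma(v))$ sending $v$ to $\iota_{\sigma(v)}$.

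Finally, to identify this isomorphism with the Gelfand transform: since $\spec_{F^p(v,v^{-1})}(v)\subseteq S^1$, Remark~\ref{rem: spectra} together with the block-diagonal description yields $\spec(v)=\overline{\bigcup_n\spec(v_n)}=\overline{\sigma(v)}$. As $F^p(v,v^{-1})$ is commutative and generated as a unital Banach algebra by $v$ and $v^{-1}$, the evaluation map $\chi\mapsto\chi(v)$ is a homeomorphism $\Max(F^p(v,v^{-1}))\cong \overline{\sigma(v)}$, under which the Gelfand transform sends $v$ to the inclusion $\iota_{\sigma(v)}$. Hence $\Gamma$ coincides with the isomorphism built above. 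The last assertion, that $F^p(v,v^{-1})$ is representable on $\ell^p$, then follows from Theorem~\ref{thm: Fpsigma}(1). The genuine technical content is absorbed into the earlier Theorems~\ref{thm: Fpvn} and~\ref{thm: FpvI}; beyond these, the only point requiring any care is verifying that the canonical contractive embedding $F^p(v,v^{-1})\hookrightarrow \bigoplus_{n\in\NI}F^p(v_n,v_n^{-1})$ is in fact isometric, which is immediate from the block-diagonal form of $v$.
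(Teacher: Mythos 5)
Your proposal is correct and follows essentially the same route as the paper: reduce to a complete $\sigma$-finite standard Borel space, decompose $v=\bigoplus_{n\in\NI}v_n$ along the $T$-periodic strata, compute $\|f(v)\|=\sup_{n\in\NI}\|f(v_n)\|$ for Laurent polynomials $f$, and identify each summand's norm via Theorems~\ref{thm: Fpvn} and~\ref{thm: FpvI} with the corresponding term in the definition of $\|\cdot\|_{\sigma(v),p}$. The only difference is that you spell out explicitly the reduction to the standard Borel setting and the identification of the resulting map with the Gelfand transform, steps the paper absorbs into Definition~\ref{df: sigma v} and the surrounding remarks.
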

\begin{proof} Denote by $\iota\colon \spec(v) \to \C$ the canonical inclusion $\spec(v)\hookrightarrow \C$.
For $n$ in $\NI$, denote by
$\iota_n\colon \spec(v_n)\to \C$ the restriction of $\iota$ to $\spec(v_n)$, which is the canonical inclusion
$\spec(v_n)\hookrightarrow \C$. Note that $F^p(\sigma(v))$ is generated by $\iota$ and $\iota^{-1}$, and that $\Gamma(v)=\iota$.\\
\indent Let $f\in \C[\Z]$. We claim that $\|f(v)\|=\|f(\iota)\|$.
Once we have proved this, the result will follow immediately. \\
\indent Using Theorem \ref{thm: Fpvn} and Theorem \ref{thm: FpvI} at the second step, and the definition of the norm $\|\cdot\|_{\sigma(v)}$ (Definition \ref{df: Fpsigma}) at the third step, we have
\begin{align*} \|f(v)\|&=\sup_{n\in\NI}\|f(v_n)\|=\sup_{n\in\NI}\|f(\iota_n)\|_{\spec(v_n),n}=\|f(\iota)\|_{\sigma(v)},
\end{align*}
and the claim is proved.\\
\indent The last assertion in the statement follows from part (1) of Theorem \ref{thm: Fpsigma}.\end{proof}

In particular, we have shown that $L^p$-operator algebras generated by an invertible isometry and its inverse can always be
isometrically represented on $\ell^p$. We do not know whether this is special to this class of $L^p$-operator algebras,
and we believe it is possible that under relatively mild assumptions, any separable $L^p$-operator algebra can be isometrically
represented on $\ell^p$. We formally raise this a problem.

\begin{pbm} Let $p\in [1,\I)$. Find sufficient conditions for a separable $L^p$-operator algebra to be isometrically represented on $\ell^p$. \end{pbm}

It is well known that any separable $L^2$-operator algebra can be isometrically represented on $\ell^2$. \\
\ \\
\indent We combine Theorem \ref{thm: description} with Theorem \ref{thm: Fpsigma} to get an explicit description of $F^p(v,v^{-1})$
for an invertible isometry of a not necessarily separable $L^p$-space.

\begin{cor} Let $p\in [1,\I)\setminus\{2\}$, let $(X,\mu)$ be a measure space, and let
$v\colon L^p(X,\mu)\to L^p(X,\mu)$
be an invertible isometry. Then one, and only one, of the following holds:
\be\item There exist a positive integer $N\in \N$ and a (finite) spectral configuration $\sigma=(\sigma_n)_{n=1}^N$
with $\overline{\sigma}=\spec(v)$, and a canonical isometric isomorphism
$$F^p(v,v^{-1})\cong F^p(\sigma)\cong \left(C(\spec(v)),\max\limits_{n=1,\ldots,N}\|\cdot\|_{\sigma_n,n}\right).$$
In this case, there is a Banach algebra isomorphism
\[F^p(v,v^{-1})\cong \left(C(\spec(v)),\|\cdot\|_\infty\right),\]
but this isomorphism cannot in general be chosen to be isometric unless $v$ is a multiplication operator.
\item There is a canonical isometric isomorphism
\[F^p(v,v^{-1})\cong F^p(\Z).\]
\ee
\end{cor}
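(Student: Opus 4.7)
My plan is to reduce to the separable setting covered by Theorem~\ref{thm: description} and then read off the dichotomy from Theorem~\ref{thm: Fpsigma}; the main obstacle is the reduction itself, since the corollary drops the standing standard-Borel assumption of Section~5. After appropriate measure-theoretic preparation (restricting to the $\sigma$-finite part of $X$ on which $v$-orbits of $L^p$-functions live, and invoking Theorem~\ref{thm: Lamperti} to write $v=m_h\circ u_T$), I would fix a countable dense subset $\{f_j\}_{j\in\N}$ of $\C[\Z]$ together with unit vectors $\xi_{j,k}\in L^p(X,\mu)$ satisfying $\|f_j(v)\xi_{j,k}\|>\|f_j(v)\|-1/k$, and form the countably generated $T$-invariant sub-$\sigma$-algebra $\mathcal{B}$ with respect to which $h$ and every $\xi_{j,k}$ are measurable; explicitly, $\mathcal{B}$ is generated by $T^{-m}(h^{-1}(U))$ and $T^{-m}(\xi_{j,k}^{-1}(U))$ for $m\in\Z$, $j,k\in\N$, and $U$ in a countable basis of the Borel sets of $\C$. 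The restriction $w$ of $v$ to $L^p(Y,\mathcal{B}|_Y,\mu|_Y)$, with $Y$ the $\sigma$-finite $T$-invariant hull of the supports of the $\xi_{j,k}$, is then an invertible isometry of a separable $L^p$-space, and the restriction map $F^p(v,v^{-1})\to F^p(w,w^{-1})$ is contractive and by design preserves the norm of each $f_j(v)$; density then yields an isometric isomorphism.

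Once the reduction is in place, I would apply Theorem~\ref{thm: description} to $w$ to obtain an isometric isomorphism $F^p(w,w^{-1})\cong F^p(\sigma(w))$, and then use Theorem~\ref{thm: Fpsigma} to distinguish the two cases according to $N:=\mbox{ord}(\sigma(w))$. If $N=\I$, part~(3) of that theorem produces a canonical isometric isomorphism $F^p(\sigma(w))\cong F^p(\Z)$ sending $v$ to the canonical generator, which is case~(2). If $N<\I$, parts~(4) and~(5) of the theorem produce the two isomorphisms of case~(1); the identity $\overline{\sigma(w)}=\spec(v)$ holds since $v$ corresponds under the Gelfand transform to $\iota_{\sigma(w)}$, whose spectrum is $\overline{\sigma(w)}$.

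For the remaining assertions, part~(5) of Theorem~\ref{thm: Fpsigma} says that an isometric isomorphism $F^p(v,v^{-1})\cong(C(\spec(v)),\|\cdot\|_\infty)$ exists iff $\mbox{ord}(\sigma(w))=1$, which happens exactly when $v$ is a multiplication operator: if $v=m_h$ then $T=\id$ and trivially $\mbox{ord}(\sigma(w))=1$, while if $T\neq\id$ on a positive-measure set there is some $n\geq 2$ (or $n=\I$) with $\mu(X_n)>0$, and augmenting $\{\xi_{j,k}\}$ with an indicator function supported in $X_n$ forces $Y\cap X_n$ to have positive measure and hence $\mbox{ord}(\sigma(w))\geq 2$. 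Mutual exclusivity of cases~(1) and~(2) follows from Theorem~\ref{thm: Fpsigma}(3): in case~(2), $F^p(\Z)$ is a proper subalgebra of $C(S^1)$, so no Banach algebra isomorphism $F^p(v,v^{-1})\cong C(\spec(v))=C(S^1)$ can exist, as any such isomorphism would factor, via character-space identifications, through the non-surjective Gelfand inclusion $F^p(\Z)\hookrightarrow C(S^1)$.
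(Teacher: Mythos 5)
Your overall strategy --- reduce to a separable $L^p$-space and then quote Theorem~\ref{thm: description} and Theorem~\ref{thm: Fpsigma} --- is the right one, and everything from the point where you have an invertible isometry of a separable $L^p$-space onwards is fine (including the mutual-exclusivity argument via non-surjectivity of the Gelfand transform of $F^p(\Z)$). The problem is the reduction itself. You begin by ``invoking Theorem~\ref{thm: Lamperti} to write $v=m_h\circ u_T$'', but that theorem is only proved for complete $\sigma$-finite \emph{standard Borel} spaces; for an arbitrary measure space (even an arbitrary $\sigma$-finite one) Lamperti's theorem only yields a Boolean homomorphism of the measure algebras, not a point transformation $T$. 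Every subsequent step of your construction --- the generating sets $T^{-m}(h^{-1}(U))$, the ``$T$-invariant hull'', the sets of points of period $n$, and the criterion ``$T\neq\id$ on a positive-measure set'' --- presupposes this point realization, so the reduction is essentially circular: you need to be on a standard Borel space already in order to carry it out. (A smaller, fixable omission: even granting $v=m_h\circ u_T$, the Radon--Nikodym derivative $d(\mu\circ T^{-1})/d\mu$ must also be $\mathcal{B}$-measurable for $L^p(Y,\mathcal{B}|_Y,\mu|_Y)$ to be $v$-invariant, and it is missing from your generating family.)

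The paper sidesteps all of this with a different device: $F^p(v,v^{-1})$ is separable as a Banach algebra (it is generated by two elements), so by Proposition~1.25 of \cite{phillips crossed products} it admits an isometric representation $\rho$ on some \emph{separable} $L^p$-space. Since $\|\rho(v)\|=\|\rho(v^{-1})\|=1$, the element $\rho(v)$ is an invertible isometry of a separable $L^p$-space generating an isometric copy of $F^p(v,v^{-1})$, and Theorem~\ref{thm: description} applies to it directly. If you want to keep your ``internal'' reduction, you would have to rework it at the level of the measure algebra (replacing $T$ by the Boolean homomorphism and its inverse, taking the closed invariant subalgebra generated by the classes of the relevant sets, and then realizing it on a standard Borel space), which is considerably more work than the paper's one-line appeal to Phillips' representation theorem.
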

It is obvious that the situations described in (1) and (2) cannot both be true.
\begin{proof}
It is clear that $F^p(v,v^{-1})$ is separable as a Banach algebra. By Proposition 1.25 in \cite{phillips crossed products},
there are a measure space $(Y,\nu)$ for which $L^p(Y,\nu)$ is separable and an isometric representation $\rho\colon F^p(v,v^{-1})\to
\B(L^p(Y,\nu))$. The result now follows from Theorem \ref{thm: description}, which assumes that the isometry acts on a separable $L^p$
space, together with Theorem \ref{thm: Fpsigma}.\end{proof}

We make a few comments about what happens when $p=2$. Invertible isometries on Hilbert spaces are automatically unitary, so
one always has $C^*(v,v^{-1})=C^*(v)\cong C(\spec(v))$ isometrically, although all known proofs of this fact use completely
different methods than the ones that are used in this paper.
\newline

Recall that an algebra is said to be \emph{semisimple} if the intersection of all its maximal left (or right) ideals
is trivial. For commutative Banach algebras, this is equivalent to the Gelfand transform being injective. It is
well-known that all $C^*$-algebras are semisimple.

\begin{cor} \label{cor: semisimple, GTonto}
Let $p\in [1,\I)$, let $(X,\mu)$ be a measure space, and let
$v\colon L^p(X,\mu)\to L^p(X,\mu)$
be an invertible isometry. Then:
\be\item $F^p(v,v^{-1})$ is semisimple.
\item Except in the case when $F^p(v,v^{-1})\cong F^p(\Z)$ and $p\neq 2$, the Gelfand transform
$\Gamma\colon F^p(v,v^{-1})\to C(\spec(v))$ is an isomorphism, although not necessarily isometric.
In particular, if $\spec(v)\neq S^1$, then $\Gamma$ is an isomorphism.
\ee\end{cor}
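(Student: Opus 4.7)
The plan is to deduce both parts from Theorem~\ref{thm: description}, which supplies a canonical isometric isomorphism $\Psi\colon F^p(v,v^{-1})\to F^p(\sigma(v))$ sending $v$ to $\iota_{\sigma(v)}$, and then to read off the two regimes from the structure theorem for $F^p(\sigma)$ given in Theorem~\ref{thm: Fpsigma}.

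For (1), $F^p(\sigma(v))$ is, by its very definition, a subalgebra of $C(\overline{\sigma(v)})$; hence the point evaluations $\mathrm{ev}_t$ for $t\in\overline{\sigma(v)}$ form a family of characters that separates its elements. Consequently the Gelfand transform of $F^p(\sigma(v))$ is injective, so $F^p(\sigma(v))$ is semisimple, and transporting this along $\Psi$ yields semisimplicity of $F^p(v,v^{-1})$.

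For (2), I would run a case distinction on $\mathrm{ord}(\sigma(v))$. Under $\Psi$, the Gelfand transform $\Gamma\colon F^p(v,v^{-1})\to C(\spec(v))$ corresponds to the natural inclusion $F^p(\sigma(v))\hookrightarrow C(\Max(F^p(\sigma(v))))$, so surjectivity of $\Gamma$ amounts to this inclusion being an equality. If $\mathrm{ord}(\sigma(v))<\infty$, part~(4) of Theorem~\ref{thm: Fpsigma} gives $F^p(\sigma(v))=C(\overline{\sigma(v)})$ as sets; hence $\Max(F^p(\sigma(v)))=\overline{\sigma(v)}=\spec(v)$ and $\Gamma$ is an onto algebra isomorphism, not generally isometric (by part~(5) of the same theorem). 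If instead $\mathrm{ord}(\sigma(v))=\infty$, then part~(3) of Theorem~\ref{thm: Fpsigma} supplies a canonical isometric isomorphism $F^p(\sigma(v))\cong F^p(\Z)$ sending $\iota_{\sigma(v)}$ to the canonical generator; since spectrum is an isomorphism invariant, $\spec(v)=S^1$. Then $\Gamma$ is surjective if and only if $F^p(\Z)=C(S^1)$ as sets, which holds precisely when $p=2$; for $p\neq 2$, part~(3) of Theorem~\ref{thm: Fpsigma} explicitly asserts that not every continuous function on $S^1$ has finite $\|\cdot\|_{\sigma(v),p}$-norm, and $\Gamma$ is not surjective. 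The ``in particular'' clause is then immediate: if $\spec(v)\neq S^1$, the infinite-order branch is excluded and we land in the finite-order case, where $\Gamma$ is an isomorphism.

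The proof is essentially bookkeeping rather than hard analysis; the only place where I would be careful is in correctly identifying the maximal ideal space $\spec(v)$ in each branch as either $\overline{\sigma(v)}$ or $S^1$. This, however, is forced by the invariance of spectrum under the isometric isomorphisms from Theorems~\ref{thm: description} and~\ref{thm: Fpsigma}(3), together with the well-known fact that the canonical generator of $F^p(\Z)$ has spectrum $S^1$. Once this identification is in place, both semisimplicity and the dichotomy for surjectivity follow by inspection.
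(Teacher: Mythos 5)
Your proof is correct and follows essentially the same route as the paper, which states this corollary without a separate argument precisely because it is read off from the isometric isomorphism $F^p(v,v^{-1})\cong F^p(\sigma(v))$ of Theorem~\ref{thm: description} together with the dichotomy in Theorem~\ref{thm: Fpsigma} (equivalently, from the preceding corollary). Your bookkeeping of the two branches, including the identification of $\spec(v)$ with $\overline{\sigma(v)}$ in the finite-order case and with $S^1$ in the infinite-order case, matches the intended argument.
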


The conclusions in Corollary \ref{cor: semisimple, GTonto} are somewhat surprising. Indeed, as we will see
in the following section, part (1) fails if $v$ is an invertible isometry of a \emph{subspace} of an $L^p$-space.
It will be shown in \cite{} that part (2) also fails for isometries of subspaces of $L^p$-spaces.
\newline

It is a standard fact that Banach algebras are closed under holomorphic functional calculus, and that $C^*$-algebras are closed under
continuous functional calculus. Using the description of the Banach algebra $F^p(v,v^{-1})$ of the corollary above, we can conclude that
algebras of this form are closed by functional calculus of a fairly big class of functions, which in some cases includes all continuous
functions on the spectrum of $v$.

\begin{cor}\label{rem: functional calculus}
Let $p\in [1,\I)\setminus\{2\}$, let $(X,\mu)$ be a measure space, and let
$v\colon L^p(X,\mu)\to L^p(X,\mu)$
be an invertible isometry. Then $F^p(v,v^{-1})$ is closed under functional calculus for continuous functions on $\spec(v)$ of
bounded variation. Moreover, if $F^p(v,v^{-1})$ is not isomorphic to $F^p(\Z)$, then it is closed under continuous functional calculus.
\end{cor}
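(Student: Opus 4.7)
The plan is to reduce to the dichotomy furnished by the preceding corollary: either
(a) $F^p(v,v^{-1})$ is Banach algebra isomorphic to $C(\spec(v))$ via the inverse of the Gelfand transform (possibly non-isometrically), or
(b) $F^p(v,v^{-1})$ is isometrically isomorphic to $F^p(\Z)$.

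In case (a), the isomorphism $\Gamma^{-1}\colon C(\spec(v))\to F^p(v,v^{-1})$ provided by the preceding corollary is a continuous unital homomorphism sending the coordinate function to $v$. By the Stone--Weierstrass density of Laurent polynomials in $C(\spec(v))$, this map coincides with the unique continuous functional calculus for $v$ extending polynomial evaluation. Consequently $f(v)\in F^p(v,v^{-1})$ for every $f\in C(\spec(v))$, which yields the ``moreover'' statement; since BV continuous functions are continuous, this also disposes of the bounded-variation claim in case (a).

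In case (b) the ``moreover'' statement does not apply, and it remains to establish a BV continuous functional calculus. Via the isometric isomorphism $F^p(v,v^{-1})\cong F^p(\Z)$ from Theorem \ref{thm: description}, which sends $v$ to the canonical generator of $F^p(\Z)$, this reduces to showing that every $f\in C(S^1)\cap BV(S^1)$ lies in the image of the Gelfand embedding $F^p(\Z)\hookrightarrow C(S^1)$, and that the resulting assignment $f\mapsto f(v)$ defines a continuous homomorphism from the BV norm into $F^p(\Z)$. I would carry this out by producing, for each such $f$, a sequence of trigonometric polynomials (e.g.\ the partial Fourier sums of $f$) that converges to $f$ in the operator norm on $\ell^p(\Z)$; this is a classical consequence of harmonic-analytic multiplier estimates for symbols of bounded variation, combined with the $1/|n|$ decay of the Fourier coefficients of a BV function.

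The main obstacle is precisely this harmonic-analytic input in case (b), as the remaining ingredients are essentially formal consequences of the preceding corollary and Theorem \ref{thm: description}. The delicate point is identifying a Fourier-analytic statement that applies uniformly across $p\in[1,\I)\setminus\{2\}$, since the naive absolute convergence of the Fourier series of a BV continuous function can fail. For $p\in(1,\I)\setminus\{2\}$ the $\ell^p$-boundedness of the Hilbert transform together with a Marcinkiewicz-type multiplier theorem supply what is needed; at the endpoint $p=1$, where these tools are unavailable, the convergence must be justified by a different mechanism, for instance via integration by parts against the Stieltjes measure associated with the BV function, together with the observation that resolvents $(\zeta-v)^{-1}$ for $\zeta\notin\spec(v)$ already lie in $F^p(v,v^{-1})$.
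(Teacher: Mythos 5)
Your reduction to the dichotomy is the right frame and matches the paper, which states this corollary without an explicit proof, as a consequence of the preceding structure results: in the finite-order case the Gelfand transform is a (not necessarily isometric) algebra isomorphism onto $C(\spec(v))$, so continuous functional calculus is automatic, and all the content sits in the case $F^p(v,v^{-1})\cong F^p(\Z)$. For $p\in(1,\I)$ your plan there is workable but not quite as written: Stechkin's multiplier theorem gives the uniform bound $\|S_Nf\|_{M_p(\Z)}\leq C_p\left(\|f\|_\I+V(f)\right)$ for the partial Fourier sums of a bounded-variation symbol, and uniform boundedness together with the $O(1/|n|)$ decay of $\widehat{f}(n)$ does \emph{not} by itself give convergence of $S_Nf$ in the multiplier norm. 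The missing step is to interpolate the uniformly bounded $M_{p_0}$-norms against $\|S_Nf-f\|_{M_2}=\|S_Nf-f\|_\I\to 0$ (Dirichlet--Jordan), which yields $\|S_Nf-f\|_{M_p}\to 0$ for every $p$ strictly between $p_0$ and $2$, hence for all $p\in(1,\I)$; only then does $f(\iota)$ land in the closed algebra $F^p(\Z)$ rather than merely in $\B(\ell^p(\Z))$.

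The genuine gap is the endpoint $p=1$, and the repair you sketch cannot succeed. In case (b) one has $\spec(v)=S^1$, so the resolvents $(\zeta-v)^{-1}$ exist only for $\zeta$ off the circle and the Stieltjes integration-by-parts device has no boundary values to work with. More decisively, $F^1(\Z)=\ell^1(\Z)$, its Gelfand image is the Wiener algebra $A(S^1)$ of absolutely convergent Fourier series, and the bounded operators on $\ell^1(\Z)$ commuting with the shift are exactly convolutions by $\ell^1$-sequences; but $C(S^1)\cap BV$ is not contained in $A(S^1)$. Indeed, the convex-coefficient criterion produces $g\in L^1(S^1)$ with $\widehat{g}(n)=\tfrac{1}{2\log|n|}$ for $|n|\geq 2$, and its (periodized) primitive $f$ is absolutely continuous --- hence continuous and of bounded variation --- with $|\widehat{f}(n)|=\tfrac{1}{2|n|\log|n|}$, so that $\sum_n|\widehat{f}(n)|=\I$. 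For such $f$ the would-be operator $f(v)$ is not even bounded on $\ell^1(\Z)$, let alone an element of $F^1(v,v^{-1})$. So no argument can close your case (b) at $p=1$: the bounded-variation assertion there requires either restricting to $p\in(1,\I)$ or a weaker interpretation of ``functional calculus'' than membership of $f(v)$ in the algebra, and your proposal (like the corollary as literally stated) does not address this.
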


In the context of the corollary above, if $p=2$ then $F^p(v,v^{-1})$ is always isometrically isomorphic to $C(\spec(v))$, and hence
it is closed under continuous functional calculus.

We conclude this work by describing all contractive homomorphisms between algebras of the form $F^p(v,v^{-1})$ that
respect the canonical generator.

\begin{cor}
Let $p\in [1,\I)\setminus\{2\}$, and let $v$ and $w$ be two invertible isometries on $L^p$-spaces.
The following are equivalent:
\begin{enumerate}
\item The linear map $\varphi_0\colon \C[v,v^{-1}]\to \C[w,w^{-1}]$ determined by $v\mapsto w$, extends to a contractive homomorphism
$$\varphi\colon F^p(v,v^{-1})\to F^p(w,w^{-1}).$$
\item We have $\spec(v)\subseteq \spec(w)$, and for every function $g$ in $F^p(\sigma(w))\subseteq C(\spec(w))$, the
restriction $g|_{\spec(v)}$ belongs to $F^p(\sigma(v))$ and
$$\left\|g|_{\spec(v)}\right\|_{\sigma(v)} \leq\|g\|_{\sigma(w)}.$$
\item We have $\widetilde{\sigma}(v)\subseteq \widetilde{\sigma}(w)$.
\end{enumerate}
\end{cor}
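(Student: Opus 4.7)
The plan is to reduce all three conditions to equivalent statements about saturated spectral configurations, and then deploy Theorem~\ref{thm: SpConfMapsAlgs} directly. First, I would invoke Theorem~\ref{thm: description} to obtain canonical isometric isomorphisms $F^p(v,v^{-1})\cong F^p(\sigma(v))$ and $F^p(w,w^{-1})\cong F^p(\sigma(w))$, under which the generators $v$ and $w$ correspond respectively to the inclusions $\iota_{\sigma(v)}$ and $\iota_{\sigma(w)}$. The proposition asserting $F^p(\sigma)\cong F^p(\widetilde{\sigma})$ canonically (and sending $\iota_\sigma$ to $\iota_{\widetilde{\sigma}}$) then allows me to replace these with the saturated configurations $\widetilde{\sigma(v)}$ and $\widetilde{\sigma(w)}$ throughout, so that the problem becomes one about maps between $F^p(\widetilde{\sigma(v)})$ and $F^p(\widetilde{\sigma(w)})$ carrying canonical inclusion to canonical inclusion.

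Next, I would translate each of the three conditions into a statement purely about these saturated configurations. Condition (1) becomes, via the identifications above, the existence of a contractive unital homomorphism between the two saturated-configuration algebras that matches the distinguished inclusions. Using the identities $\overline{\sigma(v)}=\widetilde{\sigma(v)}_1$ and $\overline{\sigma(w)}=\widetilde{\sigma(w)}_1$, condition (2) translates precisely into the containment of the first spectral sets together with the norm inequality for restriction of functions, which matches condition (2) of Theorem~\ref{thm: SpConfMapsAlgs}. Condition (3) is already a statement about the saturations, which is condition (1) of Theorem~\ref{thm: SpConfMapsAlgs}. Once all three are rephrased this way, their equivalence is exactly the content of Theorem~\ref{thm: SpConfMapsAlgs}, applied with the appropriate assignment of $\sigma$ and $\tau$ to $\widetilde{\sigma(v)}$ and $\widetilde{\sigma(w)}$.

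The only piece of non-trivial bookkeeping will be to verify that the direction of the homomorphism described in condition (1) of the corollary corresponds correctly to the direction in condition (3) of Theorem~\ref{thm: SpConfMapsAlgs}, so that the right configuration plays the role of $\sigma$ and the right one plays that of $\tau$. Beyond this direction-tracking, no genuinely new technical obstacle arises: the substantive work, including the delicate bump-function construction used to recover a saturated configuration from the norms on its associated algebra, has already been carried out in the proof of Theorem~\ref{thm: SpConfMapsAlgs}, and the present corollary is just its transfer along the identifications provided by Theorem~\ref{thm: description}.
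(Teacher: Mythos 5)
Your strategy is exactly the paper's: the paper disposes of this corollary in one line by citing Theorem~\ref{thm: description} together with Corollary~\ref{cor: isom classif of Fpsigma}, and you simply go one level deeper and invoke Theorem~\ref{thm: SpConfMapsAlgs} directly, which is in fact the more appropriate reference since the corollary concerns one-directional contractive homomorphisms rather than isometric isomorphisms. The identifications you set up via Theorem~\ref{thm: description} and the proposition identifying $F^p(\sigma)$ with $F^p(\widetilde\sigma)$ are the right ones.

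However, the step you defer as ``bookkeeping'' is not routine; it is the only real content of the proof, and carrying it out honestly does not reconcile with the statement as printed. In Theorem~\ref{thm: SpConfMapsAlgs} the contractive homomorphism of condition~(3) runs from the algebra of the \emph{larger} configuration to that of the \emph{smaller} one: $\varphi\colon F^p(\sigma)\to F^p(\tau)$ with $\varphi(\iota_\sigma)=\iota_\tau$ corresponds to $\tau\leq\sigma$. Conditions~(2) and~(3) of the present corollary ($\spec(v)\subseteq\spec(w)$, restriction $F^p(\sigma(w))\to F^p(\sigma(v))$ contractive, and $\widetilde{\sigma}(v)\subseteq\widetilde{\sigma}(w)$) force the assignment $\tau=\widetilde{\sigma}(v)$, $\sigma=\widetilde{\sigma}(w)$, and hence yield a contractive homomorphism $F^p(w,w^{-1})\to F^p(v,v^{-1})$ sending $w\mapsto v$. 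Condition~(1) as printed asserts a contractive homomorphism in the opposite direction, $F^p(v,v^{-1})\to F^p(w,w^{-1})$ with $v\mapsto w$, which by the same theorem is equivalent to the reverse inclusion $\widetilde{\sigma}(w)\subseteq\widetilde{\sigma}(v)$. A concrete check: if $\sigma(v)_1=S^1$ is the only nonempty spectral set for $v$ while $\sigma(w)_2=S^1$, then $\|\cdot\|_{\sigma(v)}$ is the supremum norm and $\|\cdot\|_{\sigma(w)}=\|\cdot\|_{S^1,2,p}$ strictly dominates it for $p\neq 2$, so $v\mapsto w$ is not contractive even though $\widetilde{\sigma}(v)\subseteq\widetilde{\sigma}(w)$. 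So when you do the direction-tracking you must either reverse the arrow in~(1) or reverse the inclusions in~(2) and~(3); your proposal as written would silently paper over this mismatch rather than resolve it.
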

\begin{proof} This is an immediate consequence of Theorem \ref{thm: description} and Corollary \ref{cor: isom classif of Fpsigma}.\end{proof}

\section{Application: quotients of Banach algebras acting on $L^1$-spaces}

In this section, we use our description of Banach algebras generated by invertible isometries of $L^p$-spaces
to answer the case $p=1$ of a question of Le Merdy (Problem~3.8 in~\cite{LeMerdy}).
In the theorem below, we show that the quotient of a Banach algebra that acts on an $L^1$-space, cannot in general
be represented on \emph{any} $L^p$-space for $p\in [1,\I)$.
In \cite{GarThi_QuotLpOpAlgs}, we give a negative answer to the remaining cases of Le Merdy's question, again
using the results of the present work.

We begin with some preparatory notions.
Let $A$ be a commutative Banach algebra, and denote by $\Gamma\colon A\to C_0(\Max(A))$ its Gelfand transform.
Given a closed subset $E\subseteq \Max(A)$, denote by $k(E)$ the ideal
\[k(E)=\{a\in A\colon \Gamma(a)(x)=0 \ \mbox{ for all } x\in E\}\]
in $A$.
Similarly, given an ideal $I$ in $A$, set
\[h(I)=\{x\in \Max(A)\colon \Gamma(a)(x)=0 \ \mbox{ for all } a\in I\},\]
which is a closed subset of $\Max(A)$. It is clear that $h(k(E))=E$ for every closed subset $E\subseteq \Max(A)$.
It is an easy exercise to check that if $A$ is a semisimple Banach algebra, then the quotient $A/k(E)$ is
semisimple as well, essentially because $k(E)$ is the largest ideal $J$ of $A$ satisfying $h(J)=E$.

\begin{df}
A commutative, semisimple Banach algebra $A$ is said to have (or satisfy) \emph{spectral synthesis}
if for every closed subset $E\subseteq \Max(A)$, there is only one ideal $J$ in $A$ satisfying $h(J)=E$
(in which case it must be $J=k(E)$).
\end{df}

It is easy to verify that a semisimple Banach algebra $A$ has spectral synthesis if and only if every quotient
of $A$ is semisimple. (We are thankful to Chris Phillips for pointing this out to us.)

It is a classical result due to Malliavin in the late 50's (\cite{malliavin}), that for an abelian
locally compact group $G$, its group algebra $L^1(G)$ has spectral synthesis if and only if $G$ is compact.

\begin{thm}
\label{thm: L1OpAlgs}
There is a quotient of $F^1(\Z)$ that cannot be isometrically represented on \emph{any} $L^p$-space for $p\in [1,\I)$.
In particular, the class of Banach algebras that act on $L^1$-spaces is not closed under quotients.
\end{thm}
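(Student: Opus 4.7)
The strategy is to combine Malliavin's theorem (quoted just before the statement) with the semisimplicity assertion of Corollary \ref{cor: semisimple, GTonto}. Since $\Z$ is amenable, Theorem \ref{thm:AmenTFAE} together with the well-known fact that left convolution $\lambda_1\colon L^1(G)\to\B(L^1(G))$ is isometric identifies $F^1(\Z)$ isometrically with the classical group algebra $L^1(\Z)=\ell^1(\Z)$. Because $\Z$ is noncompact, Malliavin's theorem provides a closed ideal $J\subsetneq k(h(J))$ in $F^1(\Z)$; the quotient $A:=F^1(\Z)/J$ then contains a nonzero element of its Jacobson radical, and is in particular not semisimple.

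I will argue by contradiction. Suppose that $A$ admits an isometric representation $\pi\colon A\to\B(L^p(Y,\nu))$ for some $p\in[1,\I)$ and some measure space $(Y,\nu)$. Let $u=\delta_1\in F^1(\Z)$ be the standard generator and let $q\colon F^1(\Z)\to A$ denote the quotient map. Both $\|q(u)\|$ and $\|q(u^{-1})\|$ are at most $1$, while submultiplicativity gives $\|q(u)\|\cdot\|q(u^{-1})\|\geq\|q(uu^{-1})\|=\|1\|=1$, so both norms equal $1$. Applying the isometric $\pi$, the element $v:=\pi(q(u))\in\B(L^p(Y,\nu))$ is an invertible isometry whose inverse $\pi(q(u^{-1}))$ is also an isometry. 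Because $F^1(\Z)$ is generated as a Banach algebra by $u$ and $u^{-1}$, the quotient $A$ is generated by their images, so $\pi(A)$ coincides with the Banach subalgebra $F^p(v,v^{-1})\subseteq\B(L^p(Y,\nu))$ from Section 5. Hence $A\cong F^p(v,v^{-1})$ isometrically.

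To finish, I invoke part (1) of Corollary \ref{cor: semisimple, GTonto}, which asserts that $F^p(v,v^{-1})$ is semisimple for every $p\in[1,\I)$. This contradicts the choice of $J$, so no such representation of $A$ can exist. The second assertion is then immediate: $F^1(\Z)=L^1(\Z)$ is itself a Banach algebra acting isometrically on $L^1(\Z)$, while its quotient $A$ admits no isometric representation on any $L^p$-space, in particular not on any $L^1$-space. The only delicate point to verify is that the semisimplicity in Corollary \ref{cor: semisimple, GTonto} genuinely covers $p=1$ (the case we need) and not merely $p\notin\{1,2\}$; since Lamperti's theorem as stated in Section 4 applies to all $p\neq 2$, Section 5 indeed covers $p=1$, which is exactly what makes the application to Le Merdy's question go through.
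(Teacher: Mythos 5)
Your proposal is correct and follows essentially the same route as the paper: identify $F^1(\Z)$ with $\ell^1(\Z)$, use Malliavin's theorem to produce a non-semisimple quotient, observe that the images of the generator and its inverse in any isometric $L^p$-representation are invertible isometries (via the norm-one argument), and derive a contradiction with the semisimplicity of $F^p(v,v^{-1})$ from Corollary~\ref{cor: semisimple, GTonto}. The only cosmetic remark is that you need that corollary for \emph{every} $p\in[1,\infty)$ (including $p=2$, where one falls back on the $C^*$-identity $F^2(v,v^{-1})=C^*(v)\cong C(\mathrm{sp}(v))$), not just $p=1$; the corollary does cover all these cases, so there is no gap.
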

\begin{proof}
The norm of a function $f\in \ell^1(\Z)$ as an element of $F^1(\Z)=F^1_\lambda(\Z)$ is the norm it gets as a convolution
operator on $\ell^1(\Z)$. The fact that $\ell^1(\Z)$ has a contractive approximate identity is easily seen to imply
that in fact $\|f\|_{F^1(\Z)}=\|f\|_{\ell^1(\Z)}$. It follows that there is a canonical identification
$F^1(\Z)=\ell^1(\Z)$. (See Proposition~3.14 in~\cite{phillips crossed products} for a more general version of this
argument.)

Denote by $v\in\ell^1(\Z)$ the canonical invertible isometry that, together with its inverse, generates $\ell^1(\Z)$.
If $I$ is an ideal in $\ell^1(\Z)$ and $\pi\colon \ell^1(\Z)\to \ell^1(\Z)/I$ is the quotient map,
then $\ell^1(\Z)/I$ is generated by $\pi(v)$ and $\pi(v^{-1})$. These elements are invertible and have
norm one, since we have
\[\|\pi(v)\|\leq 1 \ , \|\pi(v^{-1})\|\leq 1 \ , \ \mbox{ and } \ 1\leq \|\pi(v)\|\|\pi(v^{-1})\|.\]

By Malliavin's result, $\ell^1(\Z)$ does not have spectral synthesis. Let $I$ be an ideal in $\ell^1(\Z)$ such
that $\ell^1(\Z)/I$ is not semisimple. Then this quotient cannot be represented on any $L^p$-space, for $p\in [1,\I)$,
by part~(1) of Corollary~\ref{cor: semisimple, GTonto}.
\end{proof}

We close this paper by showing that our results cannot be extended to invertible isometries of more general
Banach spaces, even subspaces of $L^p$-spaces.

\begin{rem}
We keep the notation from the proof of Theorem~\ref{thm: L1OpAlgs}.
By Corollary~1.5.2.3 in~\cite{junge}, the quotient $\ell^1(\Z)/I$ can be represented on a subspace of an
$L^1$-space. In particular, $\ell^1(\Z)/I$ is generated by an invertible isometry of a $SL^1$-space, and its
inverse. We conclude that part~(1) of Corollary~\ref{cor: semisimple, GTonto} fails if we replace $L^p$-spaces
with $SL^p$-spaces. Similarly, it follows from the results in \cite{GarThi_QuotLpOpAlgs} that part~(2) also fails
for isometries of $SL^p$-spaces, even when the Banach algebra they generate is semisimple, and even if the
invertible isometry has non-full spectrum.
\end{rem}

\end{document}